\DeclareMathAlphabet{\mathpzc}{OT1}{pzc}{m}{it}
\DeclareFontShape{T1}{calligra}{m}{n}{<->s*[1.2]callig15}{}
\DeclareMathAlphabet{\mathcalligra}{T1}{calligra}{m}{n}
\newtheorem{Def}{Definition}[section]
\newtheorem*{Teono}{Theorem}
\newtheorem*{Corno}{Corollary}
\newtheorem{Lema}[Def]{Lemma}
\newtheorem{Prop}[Def]{Proposition}
\newtheorem{Cor}[Def]{Corollary}
\newtheorem{Rem}[Def]{Remark}
\newtheorem{Theo}{Theorem}
\newtheorem{Coro}[Theo]{Corollary}
\newcommand*{\fun}[3]{#1: #2 \rightarrow #3}
\newcommand*{\ccomp}[1]{\mathcal{C}(#1)}
\newcommand*{\acomp}[1]{\mathcal{A}(#1)}
\newcommand*{\EMod}[1]{\mathrm{Mod}^{*}(#1)}
\newcommand*{\stab}[1]{\mathrm{stab}_{pt}(#1)}
\newcommand*{\nat}{\mathbb{N}}
\newcommand*{\X}{\mathfrak{X}}
\newcommand*{\Cf}{\mathscr{C}}
\newcommand*{\Cfn}{\Cf_{0}}
\newcommand*{\Cff}{\Cf_{f}}
\newcommand*{\Bf}{\mathscr{B}}
\newcommand*{\Sf}{\mathscr{S}}
\newcommand*{\Mf}{\mathscr{A}}
\newcommand*{\Df}{\mathscr{D}}
\newcommand*{\N}[3]{N_{#1}^{#2, \hspace{0.05cm} #3}}
\newcommand*{\lk}[1]{\mathcalligra{lk} \hspace{0.12cm}(#1)}
\newcommand*{\eps}[2]{\epsilon^{#1, \hspace{0.05cm} #2}}
\newcommand*{\ColonEqq}{\mathrel{\mathop:}=}
\newcommand*{\wrt}{with respect to }
\newcommand*{\Aut}[1]{\mathrm{Aut}(#1)}
\author{Jes\'{u}s Hern\'{a}ndez Hern\'{a}ndez}
\title{Edge-preserving maps of curve graphs}
\date{}
\begin{document}
\maketitle
\begin{abstract}
 Suppose $S_{1}$ and $S_{2}$ are orientable surfaces of finite topological type such that $S_{1}$ has genus at least $3$ and the complexity of $S_{1}$ is an upper bound of the complexity of $S_{2}$. Let $\fun{\varphi}{\ccomp{S_{1}}}{\ccomp{S_{2}}}$ be an edge-preserving map; then $S_{1}$ is homeomorphic to $S_{2}$, and in fact $\varphi$ is induced by a homeomorphism. To prove this, we use several simplicial properties of rigid expansions, which we prove here.
\end{abstract}
\section*{Introduction}
\indent In this work we suppose $S_{g,n}$ is an orientable surface of finite topological type with genus $g \geq 3$ and $n \geq 0$ punctures. The extended mapping class group of $S_{g,n}$, denoted by $\EMod{S_{g,n}}$ is the group of isotopy classes of self-homeomorphisms of $S_{g,n}$.\\
\indent In 1979 (see \cite{Harvey}) Harvey defined the curve complex of a surface as the simplicial complex whose vertices are isotopy classes of essential curves on the surface, and whose simplices are defined by disjointness (see Section \ref{prelim} for the details). We call its $1$-skeleton the \textit{curve graph of} $S_{g,n}$, which we denote by $\ccomp{S_{g,n}}$.\\
\indent There is a natural action of $\EMod{S_{g,n}}$ on the curve graph, by automorphisms. In \cite{Ivanov} Ivanov proved that for genus at least $2$ every automorphism of the curve graph is induced by a homeomorphism of $S_{g,n}$. These results were extended for most other surfaces of finite topological type by Korkmaz and Luo in \cite{Korkmaz} and \cite{Luo}, respectively.\\
\indent Later on, Irmak (see \cite{Irmak1}, \cite{Irmak2}, \cite{Irmak3}), Behrstock and Margalit (see \cite{BehrMar}), and Shackleton (see \cite{Shack}) generalised these results for larger classes of simplicial maps. In particular, Shackleton's result implies that any locally injective self-map of the curve graph is induced by a homeomorphism for surfaces of high-enough complexity.\\
\indent Thereafter, Aramayona and Leininger introduced in \cite{Ara1} the concept of a rigid set of the curve graph (described below in a more general setting) and construct a finite rigid set for any orientable surface of finite topological type. Afterwards, they introduce in \cite{Ara2} a way of creating supersets from given sets, such that the supersets are capable of inheriting the property of being rigid (which is not trivial). This method is called the \textit{rigid expansion} of a set in \cite{Thesis} and \cite{JHH1} due to this property.\\
\indent In this article we use techniques similar to those shown in \cite{Shack} along with simplicial properties of the rigid expansions to obtain the following result, recalling that the complexity of a surface is denoted by $\kappa(S_{g,n}) = 3g-3+n$.
\begin{Theo}\label{TheoB}
 Let $S_{1} = S_{g_{1},n_{1}}$ and $S_{2} = S_{g_{2},n_{2}}$ be two orientable surfaces of finite topological type such that $g_{1} \geq 3$, and $\kappa(S_{2}) \leq \kappa(S_{1})$; let also $\fun{\varphi}{\ccomp{S_{1}}}{\ccomp{S_{2}}}$ be an edge-preserving map. Then, $S_{1}$ is homeomorphic to $S_{2}$ and $\varphi$ is induced by a homeomorphism $S_{1} \rightarrow S_{2}$.
\end{Theo}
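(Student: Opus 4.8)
The plan is to localize the problem to a single finite rigid set and then propagate the information over all of $\ccomp{S_{1}}$ by rigid expansion. Throughout, the point of the hypotheses $g_{1} \geq 3$ and $\kappa(S_{2}) \leq \kappa(S_{1})$ is to forbid the degeneracies that an a priori merely edge-preserving map could otherwise create: edge-preservation records only disjointness, so one must work to exclude collapses and to pin down the target surface.

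First I would upgrade the edge-preserving hypothesis to local injectivity, i.e.\ injectivity of $\varphi$ on the star of every vertex. Edge-preservation by itself only guarantees that disjoint curves have distinct disjoint images, so the genuine content here is ruling out the identification of two \emph{non-adjacent} vertices that share a common neighbour. Following techniques in the spirit of Shackleton, I would detect such a putative collapse by exhibiting a curve adjacent to exactly one of the two vertices and tracking the contradiction it forces on the images; this is where the complexity and genus bounds first enter.

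Next I would take an Aramayona--Leininger finite rigid set $X \subseteq \ccomp{S_{1}}$ as a base. The restriction $\varphi|_{X}$ is now a locally injective simplicial map into $\ccomp{S_{2}}$, hence an isomorphism onto a copy of $X$. Since $X$ fills $S_{1}$ and realises the full complexity $\kappa(S_{1})$, while $\kappa(S_{2}) \leq \kappa(S_{1})$, this embedded copy cannot be supported on a proper subsurface of $S_{2}$; the inequality is then forced to be an equality, $S_{2} \cong S_{1}$, and one obtains a homeomorphism $h \colon S_{1} \to S_{2}$ whose induced map agrees with $\varphi$ on $X$.

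Post-composing with the curve map induced by $h^{-1}$, I may assume $S_{2} = S_{1}$ and that $\varphi$ is the identity on $X$. Now the simplicial properties of rigid expansions do the remaining work: every vertex adjoined in the rigid expansion $\mathcal{E}(X)$ is singled out by a unique-determination condition among vertices of $X$, and because $\varphi$ is edge-preserving and fixes $X$ pointwise this condition is transported, so $\varphi$ fixes $\mathcal{E}(X)$, and by iteration $\varphi$ fixes $\mathcal{E}^{k}(X)$ for every $k$. Granting that the iterated rigid expansions of $X$ exhaust $\ccomp{S_{1}}$, one concludes $\varphi = \mathrm{id}$ on all vertices, and undoing the post-composition shows $\varphi$ is induced by $h$. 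I expect the two hard points to be the base case — showing that an edge-preserving image of the finite rigid set inside a surface of no larger complexity already forces a homeomorphism, with no collapse and no room to spare, which is delicate precisely because disjointness does not remember intersection numbers — and, above all, the exhaustion claim; proving both that the determination condition is preserved under edge-preserving maps and that the iterated expansions of a single finite rigid set fill the entire curve graph is the technical heart the abstract advertises.
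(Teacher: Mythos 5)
Your overall architecture matches the paper's: establish enough injectivity on a finite rigid set $\X(S_{1})$, identify $S_{2}$ with $S_{1}$, and then propagate via rigid expansions (the unique-determination condition is preserved by edge-preserving maps, and the iterated expansions exhaust $\ccomp{S_{1}}$ by Theorem B of \cite{JHH1}). That last propagation step is exactly the paper's Theorem \ref{TheoC} / Corollary \ref{CoroD} / Corollary \ref{Thm4}. But there is a genuine gap at the step where you conclude $S_{2} \cong S_{1}$. You argue that since $\varphi|_{X}$ is locally injective it is ``an isomorphism onto a copy of $X$'' and that this copy ``cannot be supported on a proper subsurface of $S_{2}$,'' forcing the homeomorphism. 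Two problems. First, the rigidity of $X = \X(S_{1})$ is a statement about locally injective maps $X \to \ccomp{S_{1}}$; it says nothing a priori about maps into $\ccomp{S_{2}}$ for a different target surface, so you cannot invoke it before knowing $S_{2} \cong S_{1}$ --- the argument is circular as stated. Second, even granting an abstract simplicial embedding of $X$ into $\ccomp{S_{2}}$, equality of complexity does not determine the homeomorphism type: $S_{3,0}$ and $S_{2,3}$ both have $\kappa = 6$. An abstract copy of the graph $X$ sitting in $\ccomp{S_{2}}$ does not by itself remember that chains are chains (i.e.\ that intersection number one is preserved), so ``fills the surface and realises full complexity'' does not pin down genus and puncture count separately.

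This is precisely where the bulk of the paper's work goes (Subsection \ref{chap3sec3subsec2}): after showing $\varphi$ is a Farey map, one proves the adjacency graph of a pants decomposition is preserved, deduces that nonseparating, outer, and nonouter separating curves each map to curves of the same type, uses Irmak's characterization to show $i(\alpha,\beta)=1$ implies $i(\varphi(\alpha),\varphi(\beta))=1$, concludes that chains map to chains of the same length (so $g_{1} \leq g_{2}$) and that outer curves give $n_{1} \leq n_{2}$ up to parity, and only then combines these with $\kappa(S_{1}) = \kappa(S_{2})$ to get $S_{1} \cong S_{2}$. Your proposal skips all of this. Separately, your opening claim that one can upgrade edge-preservation to local injectivity at \emph{every} vertex is stronger than what the paper establishes or needs; it proves the non-collapse statement only for Farey neighbours and for the finitely many intersection patterns occurring inside $\X(S_{1})$, in each case by exhibiting an auxiliary curve disjoint from one curve and a Farey neighbour of the other. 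You would need to either restrict your claim accordingly or supply an argument covering arbitrary intersecting pairs sharing a common neighbour.
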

\indent Note that in the context of graph theory, an edge-preserving map is a graph homomorphism. Thus, with Theorem \ref{TheoB} we generalise (for surfaces of genus at least $3$) Shackleton's result which requires the maps to be locally injective (see \cite{Shack}).\\
\indent To prove Theorem \ref{TheoB}, in Section \ref{chap3} we first take the simplicial interpretation of a rigid expansion and generalise it to the setting of abstract simplicial graphs:\\
\indent Let $\Gamma$ be a connected simplicial graph, $v$ be a vertex of $\Gamma$ and $B$ be a set of vertices of $\Gamma$. We say $v$ is uniquely determined by $B$ if $v$ is the unique vertex adjacent to every element in $B$. Let $Y$ be a full subgraph of $\Gamma$; the \textit{first rigid expansion} of $Y$, denoted by $Y^{1}$, is the full subgraph spanned by the vertices of $Y$ and all the vertices uniquely determined by sets of vertices of $Y$. The $n$-th rigid expansion is then defined inductively: $Y^{n} = (Y^{n-1})^{1}$. We denote by $Y^{\infty}$ the full subgraph spanned by the union of the vertex sets of $Y^{i}$ for $i \in \nat$. See Section \ref{chap3} below for more details.\\
\indent In this general setting, Theorem \ref{TheoC} below tells us in particular that given a simplicial map from a connected full subgraph $Y$ to $\Gamma$ that coincides with the restriction to $Y$ of an automorphism, the only way to extend it to $Y^{\infty}$ so that the extended map is at least edge-preserving, is via said automorphism of $\Gamma$.
\begin{Theo}\label{TheoC}
 Let $\Gamma$ be a connected simplicial graph, $Y$ be a connected full subgraph of $\Gamma$, and $\fun{\varphi}{\bigcup_{i \in \nat}Y^{i}}{\Gamma}$ be an edge-preserving map such that there exists an automorphism $\phi \in \mathrm{Aut}(\Gamma)$ with $\phi|_{Y} = \varphi|_{Y}$. Then $\varphi = \phi|_{Y^{\infty}}$, and any other $\psi \in \Aut{\Gamma}$ with $\varphi = \psi|_{Y^{\infty}}$ differs from $\phi$ by an element in $\stab{Y}$.
\end{Theo}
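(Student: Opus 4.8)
The plan is to prove the first assertion by induction on $i$, showing that $\varphi$ and $\phi$ agree on the vertex set of $Y^{i}$ for every $i \in \nat$; since $\bigcup_{i} Y^{i}$ is precisely the vertex set of $Y^{\infty}$, this yields $\varphi = \phi|_{Y^{\infty}}$. The second assertion will then follow from an elementary computation with $\sigma = \phi^{-1} \circ \psi$.

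For the induction, the base case $i = 0$ is exactly the hypothesis $\varphi|_{Y} = \phi|_{Y}$. Assume $\varphi|_{Y^{i-1}} = \phi|_{Y^{i-1}}$ and let $v$ be a vertex of $Y^{i}$; if $v \in Y^{i-1}$ there is nothing to prove, so suppose $v$ is uniquely determined by a set $B$ of vertices of $Y^{i-1}$, i.e.\ $v$ is the unique vertex of $\Gamma$ adjacent to every element of $B$. First I would transport this unique determination through $\phi$: since $\phi$ is an automorphism, $\phi(v)$ is adjacent to every $\phi(b)$, and any vertex $w$ adjacent to all $\phi(b)$ satisfies $\phi^{-1}(w) = v$, so $\phi(v)$ is the unique vertex of $\Gamma$ adjacent to every $\phi(b)$. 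On the other hand, $v$ is adjacent to each $b \in B$ and $\varphi$ is edge-preserving, so $\varphi(v)$ is adjacent to $\varphi(b) = \phi(b)$ (using the inductive hypothesis) for every $b \in B$. Hence $\varphi(v)$ is a vertex adjacent to all the $\phi(b)$, and by the uniqueness just established $\varphi(v) = \phi(v)$. This closes the induction and gives $\varphi = \phi|_{Y^{\infty}}$.

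For the final clause, suppose $\psi \in \Aut{\Gamma}$ also satisfies $\varphi = \psi|_{Y^{\infty}}$, and set $\sigma = \phi^{-1} \circ \psi \in \Aut{\Gamma}$. For every vertex $y$ of $Y$ we have $\psi(y) = \varphi(y) = \phi(y)$, whence $\sigma(y) = \phi^{-1}(\psi(y)) = y$; thus $\sigma$ fixes $Y$ pointwise, i.e.\ $\sigma \in \stab{Y}$, and $\psi = \phi \circ \sigma$ differs from $\phi$ by an element of $\stab{Y}$, as claimed.

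I expect the main (and essentially only) subtlety to be the asymmetry of the hypotheses: an edge-preserving map preserves adjacency but need not reflect it, so from $\varphi$ alone one can only conclude that $\varphi(v)$ lies in the common neighbourhood of the $\phi(b)$. The step that actually forces equality is the transport of \emph{uniqueness} through the genuine automorphism $\phi$; it is crucial that $v$ is uniquely determined in all of $\Gamma$, and not merely within $Y^{i-1}$, so that the common neighbour of the $\phi(b)$ is forced to be $\phi(v)$ and $\varphi(v)$ has nowhere else to go.
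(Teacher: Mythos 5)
Your proposal is correct and follows essentially the same route as the paper: the paper isolates the single-step case as Lemma \ref{edgemaplema} (transporting unique determination through the automorphism $\phi$ and using edge-preservation of $\varphi$ to pin down $\varphi(v)$) and then iterates, which is exactly your induction. Your treatment of the uniqueness clause is a slightly more direct computation than the paper's appeal to $\stab{Y}=\stab{Y^{\infty}}$ (Corollary \ref{stabilizers2}), but it amounts to the same thing.
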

\indent Similarly, we can also generalise the concept of a rigid set: we say a full subgraph $Y$ of $\Gamma$ is a \textit{rigid set} if any locally injective map $Y \rightarrow \Gamma$ is the restriction of some automorphism of $\Gamma$. With this definition and Theorem \ref{TheoC} we have the following corollary:
\begin{Coro}\label{CoroD}
 Let $\Gamma$ be a connected simplicial graph, $Y$ be a rigid set of $\Gamma$, and $\fun{\varphi}{Y^{\infty}}{\Gamma}$ be an edge-preserving map such that $\varphi|_{Y}$ is locally injective. Then $\varphi$ is the restriction to $Y^{\infty}$ of an automorphism of $\Gamma$, unique up to the pointwise stabilizer of $Y$ in $\mathrm{Aut}(\Gamma)$.
\end{Coro}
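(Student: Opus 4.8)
The plan is to obtain the Corollary as an immediate consequence of Theorem \ref{TheoC}, with the role of the hypothesised automorphism played by the one furnished by the rigidity of $Y$. First I would observe that the assumption that $\varphi|_{Y}$ is locally injective is exactly what is needed to invoke the definition of a rigid set: since $Y$ is rigid, the locally injective map $\varphi|_{Y} : Y \rightarrow \Gamma$ is the restriction to $Y$ of some automorphism $\phi \in \Aut{\Gamma}$. This $\phi$ is precisely the ingredient that Theorem \ref{TheoC} requires but does not itself produce.

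Next I would reconcile the two domains. The full subgraph $Y^{\infty}$ has the same vertex set as $\bigcup_{i \in \nat} Y^{i}$, and in fact coincides with it as a graph: the $Y^{i}$ form a nested sequence of full subgraphs, so any two adjacent vertices of $\bigcup_{i} Y^{i}$ already lie in a common $Y^{i}$ and are therefore joined there. Hence the edge-preserving map $\varphi : Y^{\infty} \rightarrow \Gamma$ of the Corollary is exactly an edge-preserving map on the domain appearing in Theorem \ref{TheoC}, and by construction it satisfies $\phi|_{Y} = \varphi|_{Y}$. Theorem \ref{TheoC} then yields $\varphi = \phi|_{Y^{\infty}}$, which is precisely the assertion that $\varphi$ is the restriction to $Y^{\infty}$ of the automorphism $\phi$.

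For the uniqueness clause I would appeal to the second half of Theorem \ref{TheoC}: if $\psi \in \Aut{\Gamma}$ also satisfies $\varphi = \psi|_{Y^{\infty}}$, then $\psi$ differs from $\phi$ by an element of $\stab{Y}$. This is exactly the claim that $\varphi$ determines its extending automorphism uniquely up to the pointwise stabiliser of $Y$ in $\Aut{\Gamma}$, completing the statement.

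Since the logical core is just this two-step application, the step requiring the most care is verifying that the hypotheses of Theorem \ref{TheoC} genuinely hold, rather than the application itself. In particular I would confirm that $Y$ is connected, as Theorem \ref{TheoC} demands, either because this is built into our working notion of a rigid set or because it should be carried as a standing assumption; and I would check that ``locally injective on $Y$'' matches the hypothesis used in the definition of rigidity, so that the passage from $\varphi|_{Y}$ to the automorphism $\phi$ is legitimate. Once these points are settled, the Corollary follows at once.
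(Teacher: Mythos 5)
Your proposal is correct and is essentially identical to the paper's own (very brief) argument: rigidity of $Y$ together with local injectivity of $\varphi|_{Y}$ produces the automorphism $\phi$ with $\phi|_{Y} = \varphi|_{Y}$, and Theorem \ref{TheoC} then does the rest, including the uniqueness up to $\stab{Y}$. Your extra care about identifying $Y^{\infty}$ with $\bigcup_{i} Y^{i}$ and about the connectedness hypothesis on $Y$ goes slightly beyond what the paper writes down, but does not change the route.
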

\indent One of the objectives of Theorem \ref{TheoC} and Corollary \ref{CoroD}, is to give a way to obtain new results on the combinatorial rigidity problem of various simplicial graphs (e.g. the pants graph, the Hatcher-Thurston graph, etc.), by one of two ways: Either by finding (suitable) subgraphs for which it can be proved  that the simplicial map is induced by an automorphism of the graph, and proving that the rigid expansion of the subgraph exhaust the original graph (so we can use Theorem \ref{TheoC}); or by finding (preferably finite) rigid sets in them where the restriction of the simplicial map is locally injective, and proving that the rigid expansions of the rigid sets exhaust the graph (so we can use Corollary \ref{CoroD}). Note that due to the abstract setting of the theorem, this \textbf{need not} be done exclusively for simplicial graphs associated to a surface; we hope in the future to use this theorem to find analogous results to those of the curve graph on various complexes associated to other structures, e.g. the various complexes associated to the outer automorphism group of a free group of finite rank.\\
\indent Later on, in Section \ref{prelim} we reintroduce some of the concepts mentioned here and introduce the notation used throughout this article. We also reintroduce the rigid set of \cite{Ara1}, and use it along with Theorem B of \cite{JHH1} and Corollary \ref{CoroD} to obtain an analogous result to Corollary \ref{CoroD} for the curve graph (see Corollary \ref{Thm4}).\\
\indent In Section \ref{chap3sec3} we take advantage of the relation between the curve graph and the topology of the underlying surface, along with the previous corollary (Corollary \ref{Thm4}), to prove Theorem \ref{TheoB}.\\
\indent Finally, we prove an application of this theorem to homomorphisms between subgroups of extended mapping class groups, following Ivanov's recipe in \cite{Ivanov}.
\begin{Coro}\label{IntroCor2}
 Let $S_{1} = S_{g_{1},n_{1}}$ and $S_{2} = S_{g_{2},n_{2}}$, such that $g_{1} \geq 3$ and $\kappa(S_{1}) \geq \kappa(S_{2}) \geq 6$; let also $\Gamma < \EMod{S_{1}}$ be a subgroup such that for every curve $\gamma$ in $S_{1}$ there exists $N \neq 0$ with $\tau_{\gamma}^{N} \in \Gamma$ (where $\tau_{\gamma}$ denotes the right Dehn twist along $\gamma$), and let $\fun{\phi}{\Gamma}{\EMod{S_{2}}}$ be a homomorphism such that:
 \begin{enumerate}
  \item For each curve $\gamma$ in $S_{1}$, there exist $L, M \in \mathbb{Z}^{*}$ such that $\tau_{\gamma}^{L} \in \Gamma$ and $\phi(\tau_{\gamma}^{L}) = \tau_{\delta}^{M}$ for some curve $\delta$ in $S_{2}$.
  \item For any disjoint curves $\alpha$ and $\beta$, there exist $N_{\alpha}, N_{\beta} \neq 0$ such that the subgroup generated by $\phi(\tau_{\alpha}^{N_{\alpha}})$ and $\phi(\tau_{\beta}^{N_{\beta}})$, is not cyclic.
 \end{enumerate}
 Then, $S_{1}$ is homeomorphic to $S_{2}$ and $\phi$ is the restriction to $\Gamma$ of an inner automorphism of $\EMod{S}$ with $S \cong S_{1} \cong S_{2}$. 
\end{Coro}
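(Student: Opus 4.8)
The plan is to follow Ivanov's recipe: use $\phi$ to build an edge-preserving map of curve graphs $\ccomp{S_{1}} \to \ccomp{S_{2}}$, apply Theorem \ref{TheoB} to deduce $S_{1} \cong S_{2}$ together with a homeomorphism realising it, and finally upgrade this homeomorphism to show $\phi$ itself is the restriction of the induced inner automorphism. Throughout I would use three standard algebraic facts about Dehn twists in $\EMod{S}$ for $\kappa(S) \geq 6$: two nonzero powers $\tau_{a}^{p} = \tau_{b}^{q}$ force $a = b$; two nonzero powers $\tau_{a}^{p}, \tau_{b}^{q}$ commute if and only if $i(a,b) = 0$; and the action of $\EMod{S}$ on $\ccomp{S}$ is faithful (equivalently, the centre is trivial for $g \geq 3$).

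First I would define the vertex map. Given a curve $\gamma$ in $S_{1}$, condition (1) provides $L, M \neq 0$ with $\tau_{\gamma}^{L} \in \Gamma$ and $\phi(\tau_{\gamma}^{L}) = \tau_{\delta}^{M}$; set $\varphi(\gamma) = \delta$. To see this is well defined, if $\tau_{\gamma}^{L_{1}}, \tau_{\gamma}^{L_{2}} \in \Gamma$ map to $\tau_{\delta_{1}}^{M_{1}}, \tau_{\delta_{2}}^{M_{2}}$, then computing $\phi(\tau_{\gamma}^{L_{1}L_{2}})$ in two ways gives $\tau_{\delta_{1}}^{M_{1}L_{2}} = \tau_{\delta_{2}}^{M_{2}L_{1}}$, whence $\delta_{1} = \delta_{2}$ by the first fact. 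I would also record that for any $k$ with $\tau_{\gamma}^{k} \in \Gamma$ the image $\phi(\tau_{\gamma}^{k})$ has infinite order and, having a nonzero power equal to $\tau_{\varphi(\gamma)}^{M_{\gamma}k} \in \langle \tau_{\varphi(\gamma)} \rangle$, has canonical reduction system exactly $\{\varphi(\gamma)\}$.

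Next I would show $\varphi$ is edge-preserving. If $\alpha \neq \beta$ are disjoint then $\tau_{\alpha}^{L_{\alpha}}, \tau_{\beta}^{L_{\beta}}$ commute, hence so do their images $\tau_{\varphi(\alpha)}^{M_{\alpha}}, \tau_{\varphi(\beta)}^{M_{\beta}}$, so $i(\varphi(\alpha), \varphi(\beta)) = 0$ by the commuting criterion. For distinctness I would argue by contradiction using condition (2): if $\varphi(\alpha) = \varphi(\beta) = \delta$, then $\phi(\tau_{\alpha}^{N_{\alpha}})$ and $\phi(\tau_{\beta}^{N_{\beta}})$ commute and each has a nonzero power in $\langle \tau_{\delta} \rangle$, and I must deduce that they generate a cyclic group, contradicting condition (2). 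I expect this to be the main obstacle: a priori these two elements only generate an abelian, virtually cyclic group, and ruling out a spurious finite part (equivalently, showing each image is a genuine power of $\tau_{\delta}$ rather than an exotic root) requires analysing the centralizer of $\tau_{\delta}$ and is exactly what using $\kappa(S_{2}) \geq 6$ together with condition (2) is meant to secure. Granting this, $\varphi$ carries edges to edges.

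Finally, Theorem \ref{TheoB} applies, since $g_{1} \geq 3$ and $\kappa(S_{2}) \leq \kappa(S_{1})$, yielding $S_{1} \cong S_{2}$ and a homeomorphism $h$ with $\varphi(\gamma) = h(\gamma)$ for every curve. Identifying $S_{1} \cong S_{2} \cong S$, let $c_{h} \in \Aut{\EMod{S}}$ be conjugation by $h$; I claim $\phi = c_{h}|_{\Gamma}$. For $g \in \Gamma$ and any curve $\gamma$, conjugating $\phi(\tau_{\gamma}^{N_{\gamma}})$ by $\phi(g)$ gives $\phi(g\tau_{\gamma}^{N_{\gamma}}g^{-1}) = \phi(\tau_{g(\gamma)}^{\pm N_{\gamma}})$, whose canonical reduction system is $\{\varphi(g(\gamma))\} = \{h(g(\gamma))\}$; since conjugation carries the canonical reduction system $\{\varphi(\gamma)\}$ of $\phi(\tau_{\gamma}^{N_{\gamma}})$ to $\{\phi(g)(\varphi(\gamma))\}$, I obtain $\phi(g)(\varphi(\gamma)) = \varphi(g(\gamma)) = c_{h}(g)(\varphi(\gamma))$. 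As $h$ is a homeomorphism the curves $\varphi(\gamma)$ exhaust the vertices of $\ccomp{S}$, so $c_{h}(g)^{-1}\phi(g)$ fixes every vertex and is therefore trivial by faithfulness. Hence $\phi(g) = c_{h}(g)$ for all $g \in \Gamma$, i.e. $\phi$ is the restriction of the inner automorphism $c_{h}$, which completes the proof.
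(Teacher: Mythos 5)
Your proposal follows essentially the same route as the paper: define $\varphi$ on vertices via condition (1), use commutation of twist powers for simpliciality and condition (2) for edge-preservation, apply Theorem \ref{TheoB}, and then recover $\phi(g)$ as conjugation by the resulting homeomorphism by comparing $\phi(g\tau_{\gamma}^{N}g^{-1})$ with $\phi(\tau_{g(\gamma)}^{N'})$ curve by curve. The one point you flag as the main obstacle (ruling out exotic roots of $\tau_{\delta}$) is simply sidestepped in the paper: condition (2) is applied with the powers already supplied by condition (1), so that both images are literal powers of $\tau_{\varphi(\alpha)}=\tau_{\varphi(\beta)}$ and the group they generate is visibly a subgroup of $\langle\tau_{\varphi(\alpha)}\rangle$, hence cyclic --- no analysis of centralizers or canonical reduction systems is invoked, though this does rest on reading hypothesis (2) as applying to those particular exponents.
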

\indent This corollary is very similar to Corollary 2 in \cite{AraSouto3}. However, it is not clear whether the hypotheses of these two corollaries are equivalent or not.\\
\indent We must remark that this work is the published version of the third chapter of the author's Ph.D. thesis, and the results here presented are dependent on the results found in \cite{JHH1}, which is the published version of the first two chapters. There we prove that using iterated rigid expansions of Aramayona and Leininger's finite rigid set, we can create an increasing sequence of finite rigid sets that exhausts the curve graph. Later on in \cite{JHH3}, the last article of this series, we use Theorem \ref{TheoB} to obtain new results in the combinatorial rigidity of another simplicial graph (the Hatcher-Thurston graph).\\[0.3cm]
\textbf{Acknowledgements:} The author thanks his Ph.D. advisors, Javier Aramayona and Hamish Short, for their very helpful suggestions, talks, corrections, and specially for their patience while giving shape to this work.
\section{Rigid sets and edge-preserving maps}\label{chap3}
\indent In this section we generalise the concepts of a rigid set and rigid expansions first introduced in \cite{Ara2} and then used in \cite{JHH1}. We suppose $\Gamma$ is a simplicial connected graph. Let $Y$ be a \textit{subgraph} of $\Gamma$, denoted by $Y < \Gamma$; we denote its vertex set by $\mathcal{V}(Y)$.\\
\indent Let $v \in \mathcal{V}(\Gamma)$, and $B \subset \mathcal{V}(\Gamma)$. Recall that the link of $v$, denoted by $\lk{v}$, is the full subgraph spanned by all the vertices adjacent to $v$ in $\Gamma$. We say that $v$ is \textit{uniquely determined} by $B$, denoted by $v = \langle B \rangle$, if we have the following: $$\{v\} = \bigcap_{w \in B} \lk{w}.$$
\indent Note that this implies that if $v = \langle B \rangle$ and $\phi \in \Aut{\Gamma}$, we have that $\phi(v) = \langle \phi(B) \rangle$.\\
\indent Let $Y < \Gamma$; the first \textit{rigid expansion} of $Y$, denoted by $Y^{1}$, is the full subgraph spanned by the vertex set:$$\mathcal{V}(Y^{1}) \ColonEqq \mathcal{V}(Y) \cup \{v: v = \langle B \rangle, B \subset \mathcal{V}(Y)\};$$ we also define $Y^{0} = Y$ and, inductively, $Y^{k} = (Y^{k-1})^{1}$.\\
\indent We state some properties of the pointwise stabilizers of a set with respect to the pointwise stabilizer of its rigid expansions.
\begin{Prop}\label{stabilizers1}
 For $Y$ a full subgraph of $\Gamma$, $\stab{Y} = \stab{Y^{1}}$.
\end{Prop}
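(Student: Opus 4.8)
The plan is to prove the two inclusions $\stab{Y^{1}} \subseteq \stab{Y}$ and $\stab{Y} \subseteq \stab{Y^{1}}$ separately. The first is immediate from the containment of vertex sets, while the second is the substantive direction, resting entirely on the equivariance remark recorded just above the definition: if $v = \langle B \rangle$ and $\phi \in \Aut{\Gamma}$, then $\phi(v) = \langle \phi(B) \rangle$.

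For the easy inclusion, I would note that by construction $\mathcal{V}(Y) \subseteq \mathcal{V}(Y^{1})$, and both $Y$ and $Y^{1}$ are full subgraphs, so any automorphism fixing every vertex of $Y^{1}$ in particular fixes every vertex of $Y$. Hence $\stab{Y^{1}} \subseteq \stab{Y}$.

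For the reverse inclusion, I would take $\phi \in \stab{Y}$ and show that $\phi$ fixes every vertex of $Y^{1}$. A vertex of $Y^{1}$ is either a vertex of $Y$, which is fixed by hypothesis, or a vertex $v = \langle B \rangle$ for some $B \subset \mathcal{V}(Y)$. In the latter case, since $\phi$ fixes $Y$ pointwise and $B \subset \mathcal{V}(Y)$, we have $\phi(B) = B$ as a set; combining this with the equivariance remark gives $\phi(v) = \langle \phi(B) \rangle = \langle B \rangle = v$. Thus $\phi$ fixes all vertices of $Y^{1}$, so $\phi \in \stab{Y^{1}}$, which completes the proof.

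The only point requiring any genuine argument is the equivariance $\phi(\langle B \rangle) = \langle \phi(B) \rangle$, which itself holds because an automorphism satisfies $\lk{\phi(w)} = \phi(\lk{w})$ and so commutes with the intersection $\bigcap_{w \in B} \lk{w}$ defining $\langle B \rangle$; this is precisely the remark stated before the definition of the rigid expansion, so I expect no real obstacle. I would emphasize only that the fullness of $Y$ and $Y^{1}$ guarantees that $\stab{}$ is determined by the vertex sets alone, so that no edge conditions intervene in either inclusion.
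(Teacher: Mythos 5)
Your proof is correct and follows essentially the same route as the paper: the inclusion $\stab{Y^{1}} \subseteq \stab{Y}$ from the containment of vertex sets, and the reverse inclusion by taking $v = \langle B \rangle$ with $B \subset \mathcal{V}(Y)$ and applying the equivariance $\phi(\langle B \rangle) = \langle \phi(B) \rangle$ together with $\phi(B) = B$. Your explicit justification of the equivariance via $\lk{\phi(w)} = \phi(\lk{w})$ is a small additional courtesy to the reader but does not change the argument.
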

\begin{proof}
 If $Y^{1} = Y$, then we have the desired result, thus we suppose $Y \subsetneq Y^{1}$. Then $\stab{Y^{1}} \subset \stab{Y}$. Let $\phi \in \stab{Y}$, and $v \in Y^{1} \backslash Y$; as such there exists $C \subset \mathcal{V}(Y)$ with $\beta = \langle C\rangle$. Given that $\phi$ is an automorphism of $\Gamma$, we have that $\phi(v) = \langle \phi(C) \rangle$, and since $\phi \in \stab{Y}$ we get $\phi(v) = \langle C \rangle = v$. Hence $\stab{Y} = \stab{Y^{1}}$.
\end{proof}
\indent If $\{Y^{i}\}_{i \in I}$ is a (possibly finite) sequence of iterated rigid expansions of a full subgraph $Y$ of $\Gamma$, we denote by $Y^{\infty} = \bigcup_{i \in I} Y^{i}$ the full subgraph of $\Gamma$ spanned by the vertex set $\bigcup_{i \in I} \mathcal{V}(Y_{i})$.\\
\indent By induction and following the same argument of Proposition \ref{stabilizers1}, we obtain the following corollary.
\begin{Cor}\label{stabilizers2}
 For $Y$ a full subgraph of $\Gamma$ and every $k \in \nat$, $\stab{Y} = \stab{Y^{k}} = \stab{Y^{\infty}}$
\end{Cor}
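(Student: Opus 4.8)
The plan is to bootstrap everything from Proposition \ref{stabilizers1} by two complementary arguments: a finite induction that handles each stage $Y^{k}$, and then a short union argument that upgrades this to $Y^{\infty}$. The key observation, which the text already signals, is that Proposition \ref{stabilizers1} is stated for an \emph{arbitrary} full subgraph of $\Gamma$, so it applies verbatim with $Y^{k-1}$ in place of $Y$.

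First I would prove $\stab{Y} = \stab{Y^{k}}$ for every $k \in \nat$ by induction on $k$. The base cases are immediate: for $k=0$ one has $Y^{0} = Y$ by definition, and $k=1$ is exactly Proposition \ref{stabilizers1}. For the inductive step I would assume $\stab{Y} = \stab{Y^{k-1}}$ and apply Proposition \ref{stabilizers1} to the full subgraph $Y^{k-1}$; since $(Y^{k-1})^{1} = Y^{k}$, this yields $\stab{Y^{k-1}} = \stab{Y^{k}}$. Chaining the two equalities gives $\stab{Y} = \stab{Y^{k}}$, closing the induction. This is precisely the repetition of the argument of Proposition \ref{stabilizers1} that the text alludes to.

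Next I would handle $Y^{\infty} = \bigcup_{i \in I} Y^{i}$. The inclusion $\stab{Y^{\infty}} \subset \stab{Y}$ is trivial, since $Y$ is a subgraph of $Y^{\infty}$ and an automorphism fixing every vertex of the larger graph fixes every vertex of the smaller one. For the reverse inclusion I would take $\phi \in \stab{Y}$; by the induction just completed, $\phi \in \stab{Y^{k}}$ for every $k$, so $\phi$ fixes every vertex lying in some $Y^{k}$. Because the vertex set of $Y^{\infty}$ is by definition the union of the vertex sets of the $Y^{i}$, every vertex of $Y^{\infty}$ lies in some $Y^{k}$ and is therefore fixed by $\phi$, whence $\phi \in \stab{Y^{\infty}}$. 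Combining the two inclusions gives $\stab{Y} = \stab{Y^{\infty}}$, and together with the previous paragraph this establishes the full chain of equalities.

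I do not expect a genuine obstacle here; this is essentially a formal consequence of Proposition \ref{stabilizers1}. The only point demanding a little care is the passage to the (possibly infinite) union: fixing each finite stage $Y^{k}$ pointwise suffices to fix $Y^{\infty}$ pointwise precisely because each individual vertex of $Y^{\infty}$ already appears at some finite stage, so no limiting argument is needed and the cardinality of the index set $I$ is irrelevant.
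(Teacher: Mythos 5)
Your proposal is correct and matches the paper's intent exactly: the paper derives this corollary ``by induction and following the same argument of Proposition \ref{stabilizers1}'', which is precisely your finite induction applying Proposition \ref{stabilizers1} to $Y^{k-1}$ at each stage, followed by the observation that every vertex of $Y^{\infty}$ lies in some finite stage $Y^{k}$. Nothing is missing.
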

\indent Now, we prove that for a restriction of an automorphism to a connected full subgraph of $\Gamma$, there exists a \textbf{unique} (up to $\stab{Y}$) edge-preserving extension to $Y^{1}$. This is the first step to prove Theorem \ref{TheoC}.
\begin{Lema}\label{edgemaplema}
 Let $Y < \Gamma$ be a full subgraph of $\Gamma$. Then any restriction of an automorphism from $Y$ to $\Gamma$ extends uniquely (up to $\stab{Y}$) to an edge-preserving map from $Y^{1}$; i.e. if $\fun{\varphi}{Y^{1}}{\Gamma}$ is an edge-preserving map such that there exists $\phi \in \mathrm{Aut}(\Gamma)$ such that $\phi|_{Y} = \varphi|_{Y}$, then $\varphi = \phi|_{Y^{1}}$, and any other $\psi \in \Aut{\Gamma}$ with $\varphi = \psi|_{Y^{1}}$ differs from $\phi$ by an element in $\stab{Y}$.
\end{Lema}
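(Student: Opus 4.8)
The plan is to verify that $\varphi$ and $\phi$ agree on every vertex of $Y^{1}$, and then to deduce the uniqueness-up-to-stabilizer statement from agreement on $Y$ alone. Since $\varphi|_{Y} = \phi|_{Y}$ by hypothesis, it suffices to check that $\varphi(v) = \phi(v)$ for each $v \in \mathcal{V}(Y^{1}) \setminus \mathcal{V}(Y)$. Such a vertex is, by the definition of the first rigid expansion, uniquely determined by some set $B \subset \mathcal{V}(Y)$; that is, $v = \langle B \rangle$, which unwinds to $\{v\} = \bigcap_{w \in B} \lk{w}$. In particular $v$ is adjacent to every $w \in B$.

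The key step is to pin down $\varphi(v)$ using edge-preservation together with the behaviour of unique determination under automorphisms. Because $\phi$ is an automorphism, it carries the defining equality of $v$ to $\phi(v) = \langle \phi(B) \rangle$, i.e. $\{\phi(v)\} = \bigcap_{w \in B} \lk{\phi(w)}$; this is exactly the remark recorded right after the definition of unique determination. Now, since $v$ is adjacent to each $w \in B$ and $\varphi$ is edge-preserving, $\varphi(v)$ is adjacent to $\varphi(w)$ for every $w \in B$. But $w \in \mathcal{V}(Y)$, so $\varphi(w) = \phi(w)$, and hence $\varphi(v) \in \lk{\phi(w)}$ for every such $w$. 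Therefore $\varphi(v)$ lies in $\bigcap_{w \in B} \lk{\phi(w)} = \{\phi(v)\}$, which forces $\varphi(v) = \phi(v)$. As $v$ was an arbitrary vertex of $Y^{1} \setminus Y$, this yields $\varphi = \phi|_{Y^{1}}$.

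Finally, for the uniqueness clause, I would take any $\psi \in \Aut{\Gamma}$ with $\varphi = \psi|_{Y^{1}}$ and restrict to $Y$: this gives $\psi|_{Y} = \varphi|_{Y} = \phi|_{Y}$, so the automorphism $\phi^{-1}\psi$ fixes every vertex of $Y$. Thus $\phi^{-1}\psi \in \stab{Y}$, which is precisely the statement that $\psi$ differs from $\phi$ by an element of $\stab{Y}$.

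I expect the only substantive point to be the middle step: ensuring that edge-preservation is strong enough to place $\varphi(v)$ inside the intersection $\bigcap_{w \in B} \lk{\phi(w)}$. This hinges on two facts that must be used cleanly, namely that an automorphism sends a vertex uniquely determined by $B$ to the vertex uniquely determined by $\phi(B)$, and that this intersection is a genuine singleton (so that membership forces equality rather than mere adjacency). Everything else in the argument is formal.
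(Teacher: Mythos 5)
Your proposal is correct and follows essentially the same argument as the paper: both pin down $\varphi(v)$ for $v=\langle B\rangle$ by noting that edge-preservation forces $\varphi(v)$ to be adjacent to every vertex of $\varphi(B)=\phi(B)$, while $\phi(v)=\langle\phi(B)\rangle$ is the unique such vertex. Your handling of the uniqueness clause (restricting $\psi$ to $Y$ directly) is marginally more self-contained than the paper's appeal to $\stab{Y}=\stab{Y^{1}}$, but it is the same idea.
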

\begin{proof}
 If $Y^{1} = Y$, then we have the desired result by definition, thus we suppose $Y \subsetneq Y^{1}$. Let $v \in Y^{1} \backslash Y$. As such, there exists $C \subset Y$ such that $v = \langle C\rangle$. This implies that $\phi(v) = \langle \phi(C) \rangle$.\\
 \indent Given that $\varphi$ is an edge-preserving map, $\varphi(v)$ is a vertex in $\Gamma$ adjacent to every element in $\varphi(C) = \phi(C)$. But $\phi(v)$ is uniquely determined by $\phi(C)$, so it is the \textit{only} vertex in $\Gamma$ adjacent to every element in $\phi(C)$. Therefore $\varphi(v) = \phi(v)$. This implies that $\varphi = \phi|_{Y^{1}}$.\\
 \indent We also have that $\phi$ is unique up to $\stab{Y}$ since $\stab{Y} = \stab{Y^{1}}$ by Corollary \ref{stabilizers2}. Therefore $\phi$ is unique up to $\stab{Y}$ as desired.
\end{proof}
\indent Following the same argument as before, we can now prove Theorem \ref{TheoC}.
\begin{proof}[\textbf{Proof of Theorem \ref{TheoC}}] 
 By an iterated use of Lemma \ref{edgemaplema}, we have that for every $k \in \nat$, $\phi|_{Y^{k}} = \varphi|_{Y^{k}}$. If $v \in \mathcal{V}(Y^{\infty})$, then $v \in \mathcal{V}(Y^{k})$ for some $k \in \nat$, thus $\phi(v) = \varphi(v)$.\\
 \indent Therefore, $\varphi$ is the restriction of an automorphism of $\Gamma$, unique up to $\stab{Y^{\infty}} = \stab{Y}$ (by Corollary \ref{stabilizers2}).
\end{proof}
\indent Let $v \in \mathcal{V}(\Gamma)$; the \textit{star of} $v$, denoted $\mathrm{star}(v)$ is defined as the subgraph of $\Gamma$ whose vertex set is $v$ union all the set of vertices adjacent to $v$ in $\Gamma$, and the edges are those edges of $\Gamma$ that have $v$ as one of their endpoints.\\
\indent Let $Y < \Gamma$ be a full subgraph of $\Gamma$. A simplicial map $\fun{\varphi}{Y}{\Gamma}$ is locally injective if for all $v \in \mathcal{V}(Y)$ we have $\varphi|_{\mathrm{star}(v) \cap Y}$ is injective.\\
\indent A \textit{rigid set} $Y < \Gamma$ is a full subgraph such that any locally injective simplicial map $\fun{\varphi}{Y}{\Gamma}$ is the restriction to $Y$ of an automorphism of $\Gamma$, unique up to its pointwise stabilizer $\stab{Y}$ in $\Aut{\Gamma}$.\\
\indent The \textbf{proof of Corollary \ref{CoroD}} follows from the fact that if $Y$ is rigid and $\varphi|_{Y}$ is locally injective, there exists an automorphism $\phi \in \Aut{\Gamma}$ such that $\varphi|_{Y} = \phi|_{Y}$. Then the conditions for Theorem \ref{TheoC} are satisfied.\\[0.3cm]
\indent An immediate consequence of Corollary \ref{CoroD} is the following corollary.
\begin{Cor}
 If $Y < \Gamma$ is a rigid set, $Y^{\infty}$ is rigid.
\end{Cor}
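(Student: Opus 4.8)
The plan is to reduce the rigidity of $Y^{\infty}$ directly to Corollary \ref{CoroD}. First I would take an arbitrary locally injective simplicial map $\fun{\psi}{Y^{\infty}}{\Gamma}$ and check that it satisfies the hypotheses of that corollary. Two verifications are needed. The first is that $\psi$ is edge-preserving: if $u,w$ span an edge of $Y^{\infty}$, then both lie in $\mathrm{star}(u) \cap Y^{\infty}$, and local injectivity of $\psi|_{\mathrm{star}(u) \cap Y^{\infty}}$ forces $\psi(u) \neq \psi(w)$; since $\psi$ is simplicial, the image of the edge is then a genuine edge rather than a collapsed vertex, so every edge maps to an edge. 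The second is that the restriction $\psi|_{Y}$ is itself locally injective: for any vertex $v \in \mathcal{V}(Y)$ we have $\mathrm{star}(v) \cap Y \subseteq \mathrm{star}(v) \cap Y^{\infty}$, and injectivity on the larger set passes to the smaller one.

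With these two facts in hand, $\psi$ is an edge-preserving map from $Y^{\infty}$ to $\Gamma$ whose restriction to $Y$ is locally injective, so Corollary \ref{CoroD} applies verbatim: since $Y$ is rigid, $\psi$ is the restriction to $Y^{\infty}$ of an automorphism $\phi$ of $\Gamma$, unique up to $\stab{Y}$. Finally I would invoke Corollary \ref{stabilizers2} to rewrite $\stab{Y} = \stab{Y^{\infty}}$, so that the uniqueness clause is phrased in terms of the pointwise stabilizer of $Y^{\infty}$, exactly matching the definition of a rigid set. As $\psi$ was an arbitrary locally injective simplicial map, this shows that every such map on $Y^{\infty}$ is the restriction of an automorphism of $\Gamma$, unique up to $\stab{Y^{\infty}}$, i.e. $Y^{\infty}$ is rigid.

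I do not expect a serious obstacle here; the argument is essentially bookkeeping that assembles the machinery already built. The only points requiring a moment's care are confirming that local injectivity in the sense defined on $Y^{\infty}$ both restricts to local injectivity on the full subgraph $Y$ and upgrades to the edge-preserving property globally, so that Corollary \ref{CoroD} is genuinely applicable, and then aligning the two uniqueness statements via the stabilizer identity of Corollary \ref{stabilizers2}.
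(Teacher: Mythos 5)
Your proposal is correct and matches the paper's intent: the paper states this corollary as an immediate consequence of Corollary \ref{CoroD}, and your two verifications (that a locally injective simplicial map on $Y^{\infty}$ is edge-preserving and restricts to a locally injective map on $Y$) together with the identification $\stab{Y} = \stab{Y^{\infty}}$ from Corollary \ref{stabilizers2} are exactly the bookkeeping needed to make that deduction explicit.
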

\section{The curve graph}\label{prelim}
\indent As stated earlier, suppose $S_{g,n}$ is an orientable surface of finite topological type with genus $g \geq 3$ and $n \geq 0$ punctures. We define the complexity of $S_{g,n}$ as $\kappa(S_{g,n}) \ColonEqq 3g-3+n$. The \textit{extended mapping class of} $S_{g,n}$, denoted by $\EMod{S_{g,n}}$, is the group of isotopy classes of \textit{all} self-homeomorphisms of $S_{g,n}$.\\
\indent A \textit{curve} $\alpha$ is a topological embedding of the unit circle into the surface. We often abuse notation and call ``curve'' the embedding, its image on $S_{g,n}$ or its isotopy class. The context makes clear which use we mean.\\
\indent A curve is \textit{essential} if it is neither null-homotopic nor homotopic to the boundary curve of a neighbourhood of a puncture.\\
\indent The (geometric) intersection number of two (isotopy classes of) curves $\alpha$ and $\beta$ is defined as follows: $$i(\alpha,\beta) \ColonEqq \min \{|a \cap b| : a \in \alpha, b \in \beta\}.$$
\indent Let $\alpha$ and $\beta$ be two curves on $S_{g,n}$. \textbf{Here} we use the convention that $\alpha$ and $\beta$ are \textit{disjoint} if $i(\alpha,\beta) = 0$ \textbf{and} $\alpha \neq \beta$.\\
\indent For $\kappa(S_{g,n}) > 1$, the curve graph of $S_{g,n}$, denoted by $\ccomp{S_{g,n}}$, is the simplicial graph whose vertices are the isotopy classes of essential curves on $S_{g,n}$, and two vertices span an edge if the corresponding curves are disjoint.\\
\indent For $\kappa(S_{g,n}) = 1$, $\ccomp{S_{g,n}}$ is the simplicial graph whose vertices are the isotopy classes of essential curves on $S_{g,n}$, and two vertices span an edge if the corresponding curves intersect minimally (intersection $1$ for $S_{1,1}$ and intersection $2$ for $S_{0,4}$). Note that in this case, $\ccomp{S_{g,n}}$ is the $1$-skeleton of the Farey complex, called the Farey graph. The Farey complex can be thought of as an ideal triangulation of the Poincar\'{e} disc-model of the hyperbolic plane.\\
\indent A \textit{multicurve} $M$ is a set of pairwise disjoint curves. This implies that in the curve graph, the full subgraph spanned by $M$ is a complete subgraph. A \textit{pants decomposition} $P$ of $S_{1}$ is a maximal multicurve, i.e. it is a maximal complete subgraph of $\ccomp{S_{1}}$. Note that $|P| = \kappa(S_{1})$.\\
\indent Now, we reintroduce the finite rigid sets of \cite{Ara1}.
\subsection{$\X(S)$ for closed surfaces}\label{chap3sec1subsec1}
\indent In this subsection we suppose $S = S_{g,0}$. Let $k \in \mathbb{Z}^{+}$ and $C = \{\gamma_{0}, \ldots, \gamma_{k}\}$ be an ordered set of $k+1$ curves in $S$. It is called a \textit{chain} of length $k+1$ if $i(\gamma_{i},\gamma_{i+1}) = 1$ for $0 \leq i \leq k-1$, and $\gamma_{i}$ is disjoint from $\gamma_{j}$ for $|i - j| > 1$. On the other hand, $C$ is called a \textit{closed chain} of length $k+1$ if $i(\gamma_{i},\gamma_{i+1}) = 1$ for $0 \leq i \leq k$ modulo $k+1$, and $\gamma_{i}$ is disjoint from $\gamma_{j}$ for $|i - j| > 1$ (modulo $k+1$); a closed chain is called \textit{maximal} if it has length $2g+2$.\\
\indent A \textit{subchain} is an ordered subset of either a chain or a closed chain which is itself a chain, and its length is its cardinality. A \textit{bounding pair} associated to a subchain $C$ of odd length, is the pair $\{\gamma_{1}, \gamma_{2}\}$ of boundary curves of the regular neighbourhood of $C$.\\
\indent Let $\Cf = \{\alpha_{0}, \ldots, \alpha_{2g+1}\}$ be the closed chain in $S$ depicted in Figure \ref{OriginalChainv2}.
\begin{figure}[h]
\begin{center}
 \includegraphics[width=9cm]{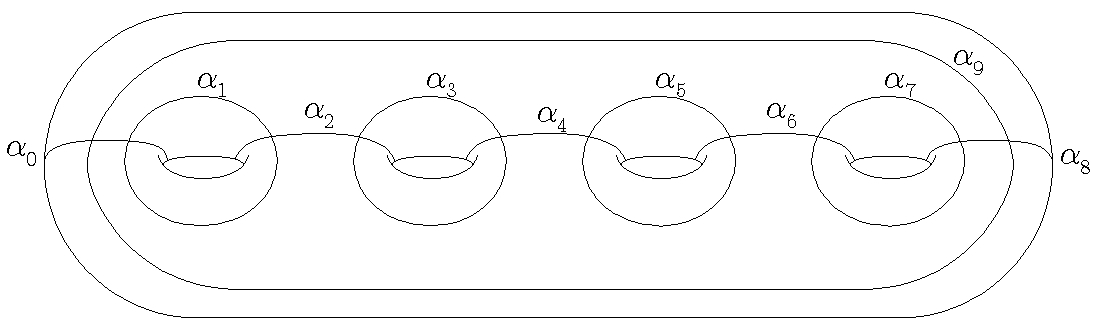} \caption{The set $\Cf = \{\alpha_{0}, \ldots, \alpha_{2g+1}\}$.}\label{OriginalChainv2} 
\end{center}
\end{figure}\\
\indent Note that if we cut the surface along the curves $\{\alpha_{2k}\}_{k = 0}^{g}$ we separate the surface in two connected components each homeomorphic to $S_{0,g+1}$. We fix the notation of $S_{e}^{+}$ and $S_{e}^{-}$ to the subsurfaces of $S$ corresponding to these connected components; see Figure \ref{Beven} for an example. Analogously, using the set $\{\alpha_{2k+1}\}_{k=0}^{g}$ we separate the surface in two connected components each homeomorphic to $S_{0,g+1}$, and we fix the notation of $S_{o}^{+}$ and $S_{o}^{-}$ to the subsurfaces of $S$ corresponding to these connected components.
\begin{figure}[h]
\begin{center}
 \includegraphics[width=9cm]{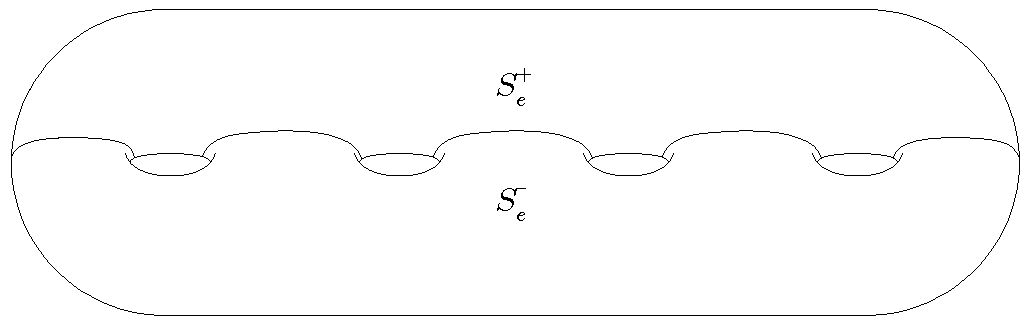} \caption{A convention of $S_{e}^{+}$ and $S_{e}^{-}$.}\label{Beven}
\end{center}
\end{figure}\\
\indent Let $J$ be a subinterval (modulo $2g+2$) of $\{0, \ldots, 2g+1\}$ such that $|J| < 2g -1$.\\
\indent If $J = \{j, \ldots, j+2k\}$ for some $j \in \nat$, we denote by $\beta_{J}^{+}$ and $\beta_{J}^{-}$ the elements of the bounding pair associated to the chain $\{\alpha_{j}, \ldots, \alpha_{j+2k}\}$, according to whether they are contained in either $S_{e}^{+}$ (or $S_{o}^{+}$) or $S_{e}^{-}$ (or $S_{o}^{-}$).\\
\indent This way, we define the set: $$\Bf \ColonEqq \{\beta_{J}^{\pm}: |J| = 2k+1 \hspace{0.2cm} \mathrm{for} \hspace{0.2cm} \mathrm{some} \hspace{0.2cm} k \in \mathbb{Z}^{+}\}.$$
\indent If $J = \{j, \ldots, j+ 2k-1\}$ for some $j \in \nat$ and $k \in \mathbb{Z}^{+}$, we get the following curve: $$\sigma_{J} \ColonEqq \langle \{\alpha_{j}, \ldots, \alpha_{j+2k-1}\} \cup \{\alpha_{j + 2k +1}, \ldots, \alpha_{j-2}\}\rangle.$$ For examples see Figure \ref{ExamplesSigmaSec3fig1}. 
\begin{figure}[h]
 \begin{center}
  \resizebox{10cm}{!}{\input{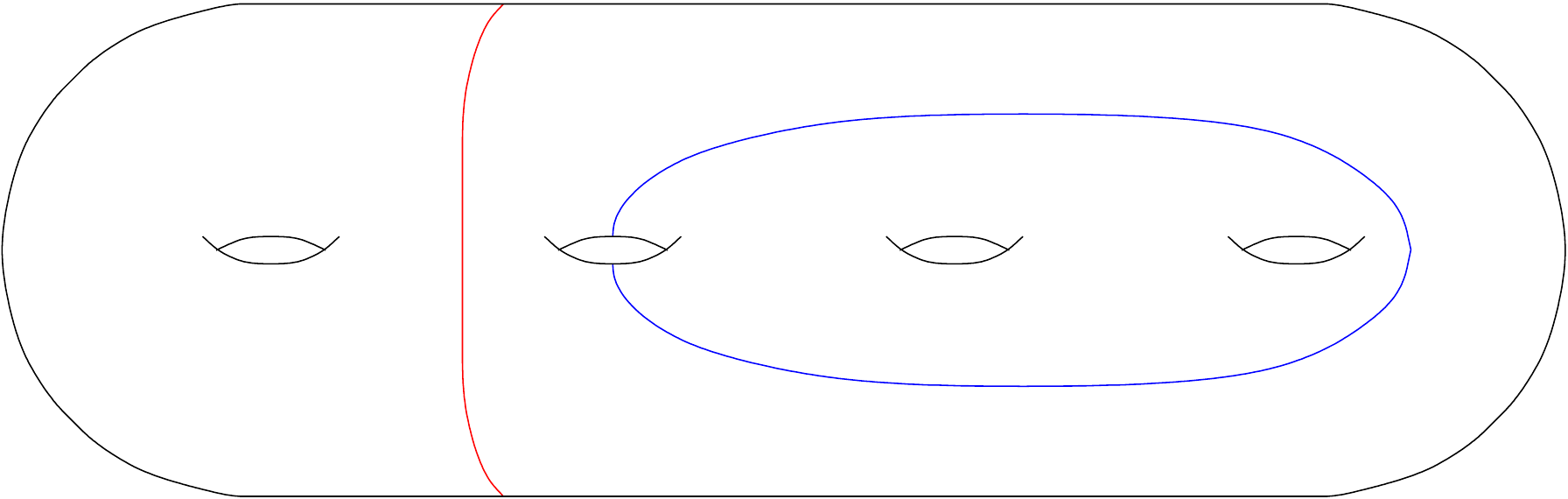_t}} \caption{Examples of $\sigma_{\{0,1\}}$ in red, and $\sigma_{\{4,5,6,7\}}$ in blue.}\label{ExamplesSigmaSec3fig1}
 \end{center}
\end{figure}\\
This way, we define the following set: $$\Sf \ColonEqq \{\sigma_{J} : |J| = 2k \hspace{0.2cm} \mathrm{for} \hspace{0.2cm} \mathrm{some} \hspace{0.2cm} k \in \mathbb{Z}^{+}\}.$$
\indent If $J = \{j, \ldots, j + 2k\}$ for some $j \in \nat$ and $k \in \mathbb{Z}^{+}$, let we get the following curves:$$\mu_{j+2k+1, J}^{+} \ColonEqq \langle \{\beta_{J}^{+},\alpha_{j+2k+1}\} \cup \{\alpha_{j}, \ldots, \alpha_{j+2k-1}\} \cup \{\beta_{\{j+2, \ldots, j+2k+2\}}^{-}\} \cup \{\alpha_{j+2k+3}, \ldots, \alpha_{j-2}\} \rangle,$$ $$\mu_{j+2k+1, J}^{-} \ColonEqq \langle \{\beta_{J}^{-},\alpha_{j+2k+1}\} \cup \{\alpha_{j}, \ldots, \alpha_{j+2k-1}\} \cup \{\beta_{\{j+2, \ldots, j+2k+2\}}^{+}\} \cup \{\alpha_{j+2k+3}, \ldots, \alpha_{j-2}\} \rangle.$$
\indent For examples see Figure \ref{ExamplesMuSec3fig1}. 
\begin{figure}[h]
 \begin{center}
  \resizebox{10cm}{!}{\input{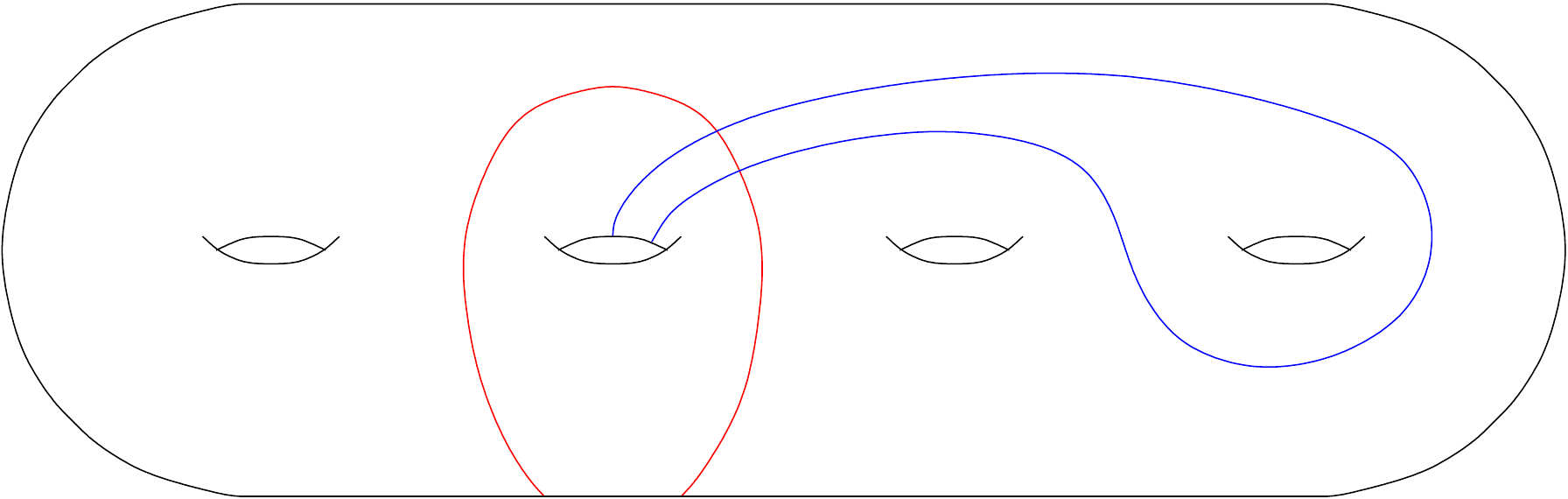_t}} \caption{Examples of $\mu_{3,\{0,1,2\}}$ in red, and $\sigma_{7,\{4,5,6\}}$ in blue.}\label{ExamplesMuSec3fig1}
 \end{center}
\end{figure}\\
\indent This way, we define the following set: $$\Mf \ColonEqq \{\mu_{i,J}^{\pm}: J = \{j, \ldots, j+2k\}  \hspace{0.2cm} \mathrm{for} \hspace{0.2cm} \mathrm{some} \hspace{0.2cm} k \in \mathbb{Z}^{+}, i = j+2k+1\}$$
\indent Finally, we have the set $$\X(S) \ColonEqq \Cf \cup \Bf \cup \Sf \cup \Mf$$
\indent Recall that, as was mentioned before, this set was proved to be rigid in \cite{Ara1}, and by construction has trivial pointwise stabilizer in $\EMod{S}$.
\subsection{$\X(S)$ for punctured surfaces}\label{chap3sec1subsec2}
\indent In this subsection we suppose $S = S_{g,n}$ with $n \geq 1$. Let $\Cf_{0} = \{\alpha_{1}, \ldots, \alpha_{2g+1}\}$ be the chain depicted in Figure \ref{OriginalChainPunctured}, and $\Cf_{f} = \{\alpha_{0}^{0}, \alpha_{0}^{1}, \ldots, \alpha_{0}^{n}\}$ be the multicurve also depicted in Figure \ref{OriginalChainPunctured}. Now, let $\Cf \ColonEqq \Cfn \cup \Cff$.
\begin{figure}[h]
 \begin{center}
  \resizebox{10cm}{!}{\input{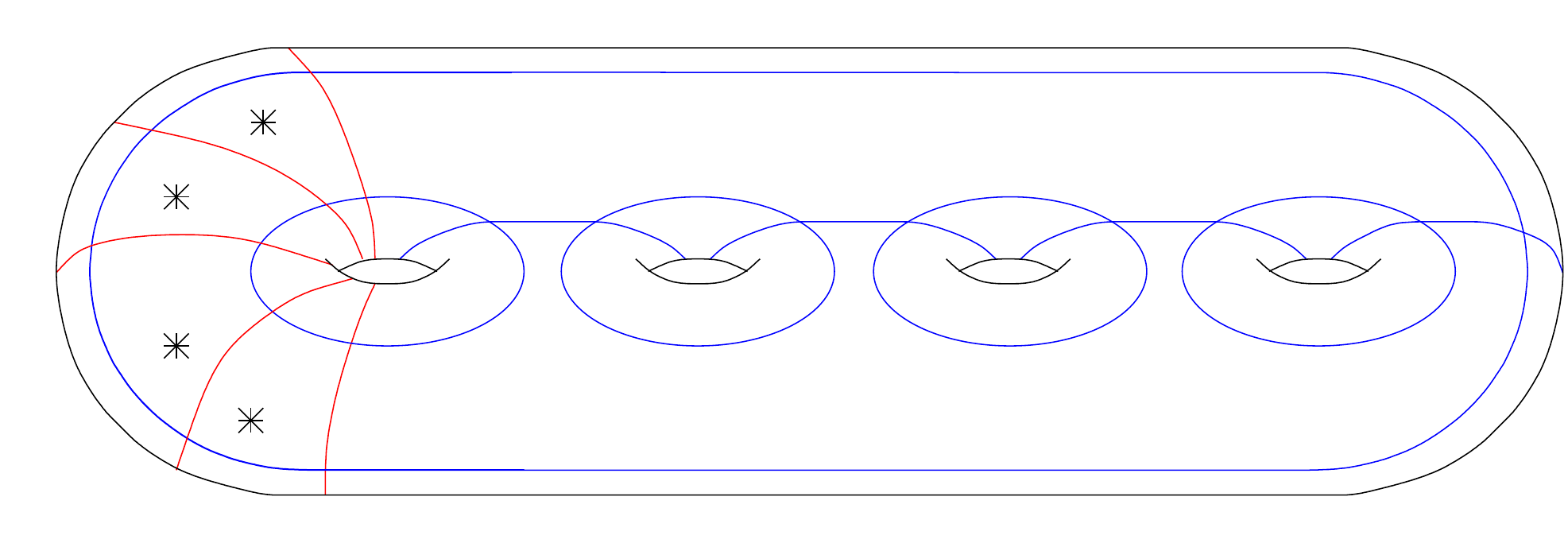_t}} \caption{$\Cf_{0}$ in blue and $\Cf_{f}$ in red for $S_{5,4}$.}\label{OriginalChainPunctured}
 \end{center}
\end{figure}\\
\indent For $i \in \{0, \ldots, n\}$, let us consider the closed chain $C_{i} = \{\alpha_{0}^{i}, \alpha_{1}, \ldots, \alpha_{2g+1}\}$. Then we denote the curve $\alpha_{0}^{i}$ by $\alpha_{0}$ to simplify notation when it is understood that $\alpha_{0} \in C_{i}$. As such $C_{i}$ has the subsets: $C_{i(o)} = \{\alpha_{j} \in C_{i} : j$ is odd$\}$ and $C_{i(e)} = \{\alpha_{j} \in C_{i} : j$ is even$\}$. These subsets are such that:
\begin{itemize}
 \item $S \backslash C_{i(e)}$ has two connected components, $S_{i(e)}^{+} = S_{0,i+g+1}$ and $S_{i(e)}^{-} = S_{0, n-i+ g+1}$.
 \item $S \backslash C_{i(o)}$ has two connected components, $S_{i(o)}^{+} = S_{0,n+g+1}$ and $S_{i(o)}^{-} = S_{0,g+1}$.
\end{itemize}
\indent Recalling that the subindices are modulo $2g+2$, we denote by $[\alpha_{j}, \ldots, \alpha_{j + 2k}]^{+}$ for some $0 < k < g-1$, the boundary component of a closed regular neighbourhood $N(\{\alpha_{j}, \ldots, \alpha_{j + 2k}\})$, that is contained in either $S_{i(o)}^{+}$ or in $S_{i(e)}^{+}$. Analogously, for $[\alpha_{j}, \ldots, \alpha_{j + 2k}]^{-}$ we denote the boundary component of a closed regular neighbourhood $N(\{\alpha_{j}, \ldots, \alpha_{j + 2k}\})$, that is contained in either $S_{i(o)}^{-}$ or in $S_{i(e)}^{-}$.\\
\indent Let $J = \{2l, \ldots, 2(l+k)\}$, for some $1 \leq k \leq g-1$, be a proper interval in the cyclic order modulo $2g+2$. Let also $\beta_{J}^{\pm} = [\alpha_{2l}, \ldots, \alpha_{2(l+k)}]^{\pm}$ (with $\alpha_{0} = \alpha_{0}^{1}$ if necessary). See Figure \ref{ExamplesBf0} for examples. We define $$\Bf_{0} \ColonEqq \{\beta_{J}^{\pm} : J = \{2l, \ldots, 2(l+k)\}, 1 \leq k \leq g-1\}.$$
\begin{figure}[h]
 \begin{center}
  \resizebox{10cm}{!}{\input{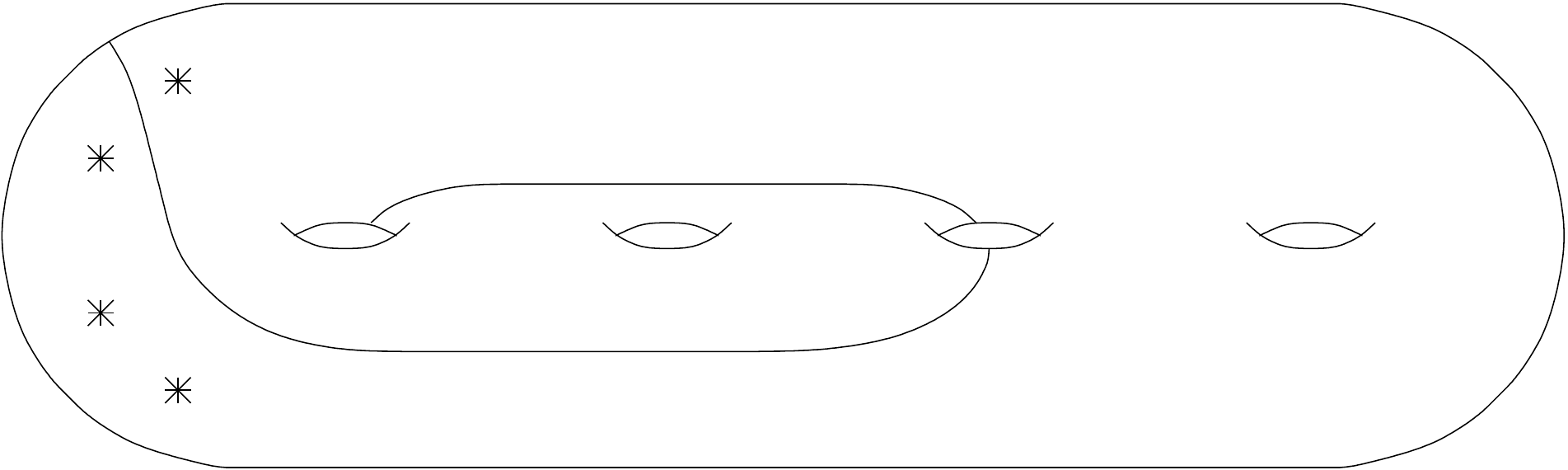_t}}\caption{Examples of curves $\beta_{\{2,3,4\}}^{+}$ and $\beta_{\{0,1,2,3,4\}}^{-}$.}\label{ExamplesBf0}
 \end{center}
\end{figure}\\
\indent For $0 \leq i \leq n -2$, we define $$\eps{i}{i+2} \ColonEqq \langle\Cfn \cup (\Cff \backslash \{\alpha_{0}^{i+1}\})\rangle;$$ note that $\eps{i}{i+2} \in \Cf^{1}$; this can be seen using Figure \ref{OriginalChainPunctured} and removing $\alpha_{0}^{i+1}$ for the chosen $i$.\\
\indent For $0 \leq i < j \leq n$ with $j - i > 2$, we define the curve: $$\eps{i}{j} \ColonEqq \langle\Cfn \cup (\Cff \backslash \{\alpha_{0}^{k} : i < k < j\}) \cup \{\eps{k}{k+2} : i \leq k \leq j-2\}\rangle;$$ note that $\eps{i}{j} \in \Cf^{2}$, and that $\eps{i}{j}$ with $j-i > 1$ is the boundary curve of a disc in $S$ containing $j-i$ punctures.\\
\indent Then, we define the set: $$\Df \ColonEqq \{\eps{i}{j} : j-i > 1\}.$$
\indent Now, let $0 \leq i \leq j \leq n$ and consider the following closed regular neighbourhoods:
\begin{center}
 \begin{tabular}{cc}
  $N_{2}^{i,j} = N(\{\alpha_{0}^{i},\alpha_{0}^{j},\alpha_{1}, \alpha_{2}\})$ & $N_{2g}^{i,j} = N(\{\alpha_{0}^{i}, \alpha_{0}^{j}, \alpha_{2g+1}, \alpha_{2g}\})$
 \end{tabular}
\end{center}
\indent Note that both $N_{2}^{i,j}$ and $N_{2g}^{i,j}$ have interiors homeomorphic to $S_{1,3}$, and each has $\eps{i}{j}$ as one of its boundary curves when $j-i > 1$.\\
\indent In $N_{2}^{i,j}$ we define $\beta_{\{0,1,2\}}^{i,+}$ as the boundary curve of $N_{2}^{i,j}$ contained in $S_{i(e)}^{+}$, and $\beta_{\{0,1,2\}}^{j,-}$ as the boundary curve contained in $S_{j(e)}^{-}$. See Figure \ref{ExamplesBetaT1} for examples.
\begin{figure}[h]
 \begin{center}
  \resizebox{10cm}{!}{\input{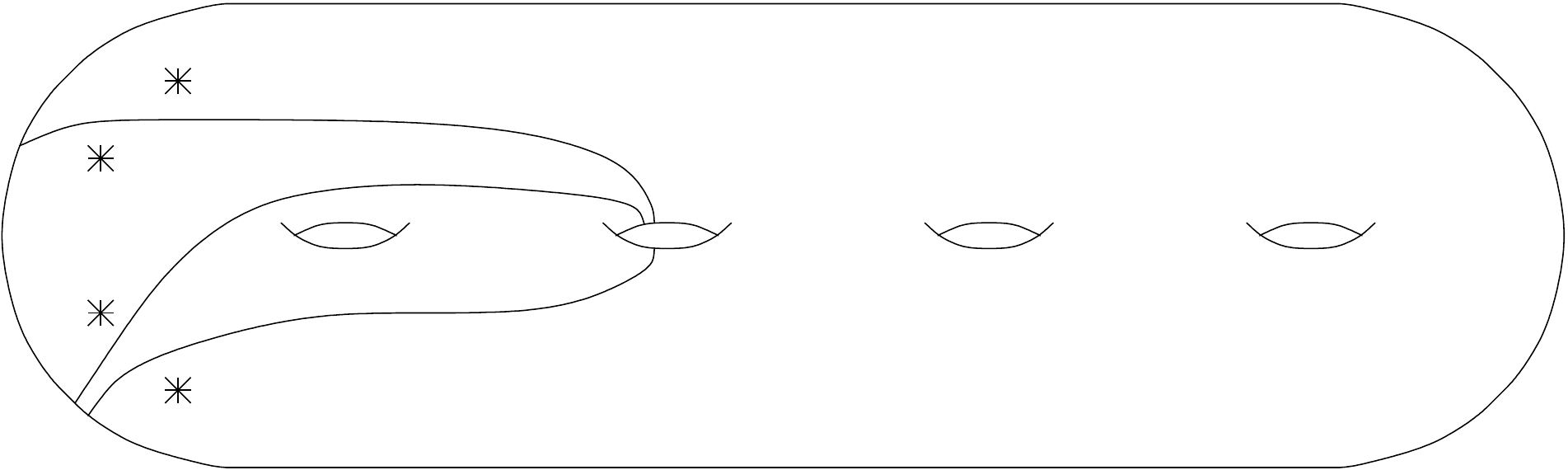_t}} \caption{Examples of curves $\beta_{\{0,1,2\}}^{1,+}$, $\beta_{\{0,1,2\}}^{3,+}$ and $\beta_{\{0,1,2\}}^{3,-}$ in $S_{4,4}$.}\label{ExamplesBetaT1}
 \end{center}
\end{figure}\\
\indent Analogously for $N_{2g}^{i,j}$, we define $\beta_{\{2g,2g+1,0\}}^{i,+}$ as the boundary curve of $N_{2g}^{i,j}$ contained in $S_{i(e)}^{+}$, and $\beta_{\{2g,2g+1,0\}}^{j,-}$ as the boundary curve contained in $S_{j(e)}^{-}$. See Figure \ref{ExamplesBetaT2} for examples.
\begin{figure}[h]
 \begin{center}
  \resizebox{10cm}{!}{\input{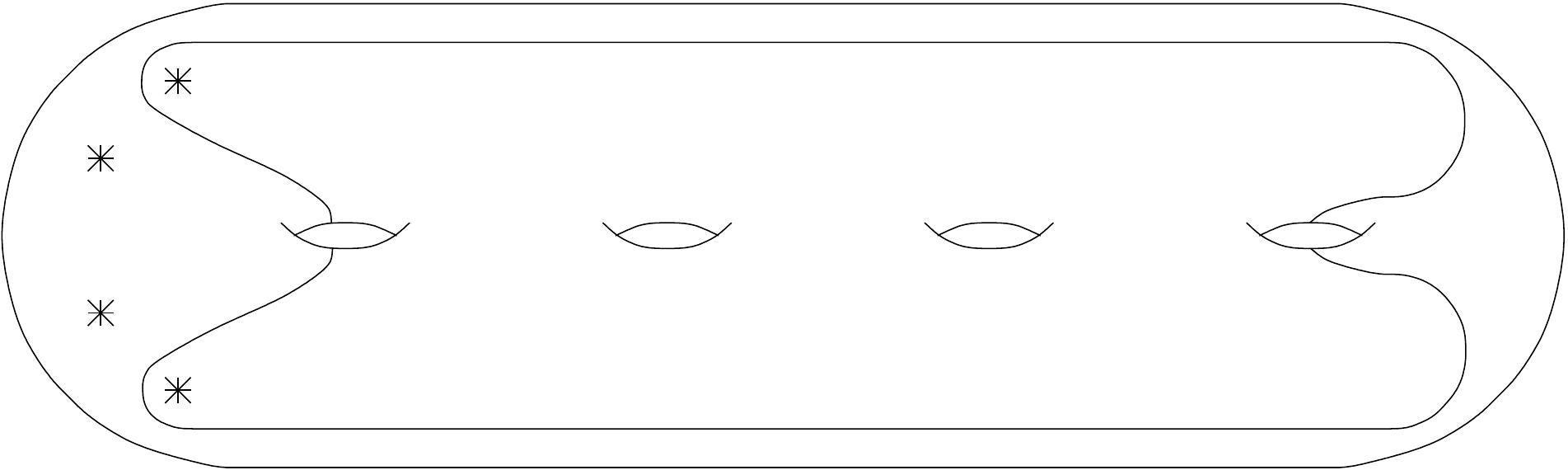_t}} \caption{Examples of curves $\beta_{\{2g,2g+1,0\}}^{1,+}$ and $\beta_{\{2g,2g+1,0\}}^{3,-}$ in $S_{4,4}$.}\label{ExamplesBetaT2}
 \end{center}
\end{figure}\\
\indent Then, we define the set $$\Bf_{T} \ColonEqq \{\beta_{J}^{i, +}, \beta_{J}^{i,-} : J \in \{ \{0,1,2\},\{2g,2g+1,0\}\}, 1 \leq i \leq n\}.$$
\indent For $0 \leq i \leq j \leq n$, we denote by $N_{1}^{i, \hspace{0.05cm} j}$ and $N_{2g+1}^{i, \hspace{0.05cm} j}$ closed regular neighbourhoods of the chains $\{\alpha_{0}^{i}, \alpha_{0}^{j}, \alpha_{1}\}$ and $\{\alpha_{0}^{i}, \alpha_{0}^{j}, \alpha_{2g+1}\}$ respectively.\\
\indent Note that $\N{1}{i}{j}$ is a two-holed torus if $j - i \geq 1$ (one of the boundary components will not be essential if $j - i = 1$). Also, $S \backslash \N{1}{i}{j}$ is the disjoint union of a subsurface homeomorphic to an at least once-punctured open disc, and a subsurface homeomorphic to $S_{g-1,n-(j-1)+1}$.\\
\indent If $j - i > 1$, one of the boundary components of $\N{1}{i}{j}$ is the curve $\eps{i}{j}$. On the other hand, for $0 \leq i \leq j \leq n$, we denote by $\sigma_{1}^{i, \hspace{0.05cm} j}$ the boundary component of $\N{1}{i}{j}$ such that one of the connected components of $S \backslash \{\sigma_{1}^{i, \hspace{0.03cm} j}\}$ is homeomorphic to $S_{1,j-i + 1}$.\\
\indent We denote by $\sigma_{2g+1}^{i, \hspace{0.05cm} j}$ the analogous boundary curves of $\N{2g+1}{i}{j}$ (whenever they are essential).\\
\indent Then, we define $$\Sf_{T} \ColonEqq \{\sigma_{l}^{i, \hspace{0.05cm} j} : l \in \{1,2g+1\}, 0 \leq i \leq j \leq n\}$$
\indent Now, let $J$ be a subinterval of $\{0, \ldots, 2g+1\}$ (modulo $2g+2$) such that $|J| \leq 2g$.\\
\indent If $J = \{i, \ldots, i + 2k-1\}$ for some $k \in \mathbb{Z}^{+}$, let $\sigma_{J} = [\alpha_{i}, \ldots, \alpha_{i+2k-1}]$ (with $\alpha_{0} = \alpha_{0}^{1}$ if necessary). We define $$\Sf_{0} \ColonEqq \{\sigma_{J} : J = \{i, \ldots, i + 2k-1\}, k \in \mathbb{Z}^{+}\}.$$
\indent If $J = \{2l, \ldots, 2(l+k)\}$, for some $k \in \mathbb{Z}^{+}$, and $j = 2(l+k) +1$, then $i(\alpha_{j},\beta_{J}^{+}) = i(\alpha_{j},\beta_{J}^{-}) = 1$. Let $\mu_{j,J}^{+}$ be the boundary curve of a regular neighbourhood of $\{\alpha_{j},\beta_{J}^{+}\}$. Analogously, let $\mu_{j,J}^{-}$ be the boundary curve of a regular neighbourhood of $\{\alpha_{j},\beta_{J}^{-}\}$. We define $$\Mf \ColonEqq \{\mu_{j,J}^{\pm} : J = \{2l, \ldots, 2(l+k)\}, k \in \mathbb{Z}^{+}, j = 2(l+k) +1\}.$$
\indent Therefore, we define: $$\X \ColonEqq \Cf \cup \Df \cup \Sf_{T} \cup \Sf_{0} \cup \Bf_{T} \cup \Bf_{0} \cup \Mf.$$
\indent Recall that, as was mentioned before, this set was proved to be rigid in \cite{Ara1}, and by construction has trivial pointwise stabilizer in $\EMod{S}$.
\subsection{Consequences of Corollary \ref{CoroD} in $\ccomp{S}$}\label{chap3sec2subsec3}
\indent The set $\X(S)$ is studied in \cite{Ara1} and \cite{Ara2}, and it is proven to be a finite rigid set of $\ccomp{S}$ (Theorems 5.1 and 6.1 in \cite{Ara1}). Also, in \cite{JHH1} we have the following result.
\begin{Teono}[B in \cite{JHH1}]
 Let $S$ be an orientable surface of genus $g \geq 3$, $n \geq 0$ punctures and empty boundary. Then $\X(S)^{\infty} = \ccomp{S}$.
\end{Teono}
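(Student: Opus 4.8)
\emph{Setup.} Since $\X(S)^{\infty}$ is by definition a subgraph of $\ccomp{S}$, it suffices to prove the reverse containment on vertices, i.e.\ that every essential curve $\gamma$ lies in some finite expansion $\X(S)^{k}$. Recall that, under the convention for $\kappa(S)>1$, a vertex $v$ satisfies $v=\langle B\rangle$ exactly when $v$ is the unique essential curve disjoint from (and distinct from) every curve in $B$; so the engine of the proof is to exhibit, for each $\gamma$, a \emph{determining set} $B$ of already-captured curves with $\gamma=\langle B\rangle$. The plan is to fix once and for all a finite filling collection $F\subset\X(S)^{k_{0}}$, and then to capture an arbitrary $\gamma$ by induction on $n(\gamma)\ColonEqq\sum_{c\in F}i(\gamma,c)$.

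\emph{Capturing a filling system.} First I would locate inside a bounded expansion $\X(S)^{k_{0}}$ a collection $F$ that fills $S$; a convenient choice is a pants decomposition together with a dual system of transversal curves, since these two multicurves jointly fill. In the closed case the chain $\Cf$, the bounding pairs $\Bf$, and the families $\Sf,\Mf$ were designed to contain precisely such a configuration, while in the punctured case the extra families $\Df,\Sf_{T},\Sf_{0},\Bf_{T},\Bf_{0}$ supply the curves surrounding the punctures; the few auxiliary curves not literally in $\X(S)$, such as the $\eps{i}{j}$, are already shown above to lie in $\Cf^{1}$ or $\Cf^{2}$. Verifying that a fixed filling $F$ sits in a single finite expansion is therefore a finite, picture-driven check using the figures of Section \ref{prelim}, and the hypothesis $g\geq 3$ guarantees that $\X(S)$ is rich enough (and rigid, by \cite{Ara1}) for this to succeed.

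\emph{Inductive step.} If $n(\gamma)=0$ then $\gamma$ is disjoint from the whole filling system $F$, which forces $\gamma\in F\subset\X(S)^{k_{0}}$; this is the base case. If $n(\gamma)>0$, I would build a determining set for $\gamma$ by surgery: resolving the intersections of the curves of $F$ (and of previously captured curves) along $\gamma$ produces a family $\{\delta_{1},\dots,\delta_{r}\}$ of curves, each disjoint from $\gamma$ and each of strictly smaller complexity in the sense that $n(\delta_{i})<n(\gamma)$, so by the inductive hypothesis each $\delta_{i}$ already lies in some $\X(S)^{k_{i}}$. The crux is then a unique-determination lemma asserting that the $\delta_{i}$, possibly together with finitely many curves of $F$, fill $S\backslash\gamma$ in such a way that $\gamma$ is the \emph{only} essential curve disjoint from all of them, i.e.\ $\gamma=\langle\{\delta_{1},\dots,\delta_{r}\}\cup F'\rangle$. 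Setting $k=1+\max_{i}k_{i}$ then gives $\gamma\in\X(S)^{k}$, which closes the induction and yields the theorem.

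\emph{Main obstacle.} The delicate point is exactly this filling/uniqueness step. It is not enough that the complementary regions of the determining set be pairs of pants; one must rule out every competing essential curve disjoint from the entire set, including curves differing from $\gamma$ only by Dehn twisting along subsurfaces. Controlling this globally, while at the same time guaranteeing that the surgered curves $\delta_{i}$ genuinely decrease $n$, is where the detailed geometry of the chain and of the bounding, $\sigma$- and $\mu$-curves must be brought to bear, and I expect this bookkeeping --- organising a uniform supply of filling determining sets valid for every isotopy type of $\gamma$ --- to constitute the bulk of the argument.
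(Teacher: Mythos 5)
A preliminary remark: the paper you were given does not prove this statement at all --- it is Theorem~B of \cite{JHH1}, quoted verbatim and used as a black box --- so I am comparing your sketch with the argument of that reference. The proof there does not induct on intersection number with a filling system. Instead it shows that $\X(S)^{\infty}$ is invariant under the Dehn twists along a finite collection of curves of $\X(S)$ (essentially the chain $\Cf$ together with a few auxiliary curves), by exhibiting explicit determining sets for the twisted images of each curve of $\X(S)$ inside a uniformly bounded expansion; an induction on word length in these twist generators then gives $f(\X(S))\subset\X(S)^{\infty}$ for every $f$ in the subgroup they generate, and since $\X(S)$ contains representatives of every orbit of curves under that subgroup, every vertex of $\ccomp{S}$ is captured. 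Your plan --- a Lickorish-style descent on $n(\gamma)=\sum_{c\in F}i(\gamma,c)$ --- is a genuinely different route, and would be attractive if it worked, since it bypasses the group action entirely. But as written it is a plan rather than a proof: the two steps you yourself flag as ``the crux'' and ``the main obstacle'' are precisely where all of the content lives, and neither is resolved.

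Concretely. \emph{First}, the surgery step does not obviously terminate: resolving an intersection of $\gamma$ with some $c\in F$ produces a curve disjoint from $\gamma$ with smaller intersection with $c$, but such surgeries can and do increase intersection with the other curves of $F$, so nothing guarantees $n(\delta_{i})<n(\gamma)$. Without a monotonicity lemma (or a cleverer complexity), the induction has no base to stand on. \emph{Second}, the unique-determination step is strictly stronger than what you state. For $\gamma=\langle B\rangle$ it is not enough that $B$ fills $S\setminus\gamma$: one needs the complement of a regular neighbourhood of $\bigcup B$ in $S$ to consist of discs, once-punctured discs, and exactly one annulus, with core $\gamma$. Any complementary component of positive complexity contains infinitely many essential curves disjoint from all of $B$, and even a second essential annulus produces a second candidate, so uniqueness fails; conversely, arranging this annular structure out of surgered curves, uniformly over all isotopy classes of $\gamma$, is essentially the whole theorem. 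Until you supply (a) a complexity that the surgeries provably decrease and (b) a construction of determining sets with the exact complementary structure above, the argument is incomplete; I would either carry out that program in detail or switch to the twist-invariance strategy of \cite{JHH1}, where the determining sets only need to be built for the finitely many ``twisted'' configurations $\tau_{\alpha}^{\pm1}(v)$ with $\alpha\in\Cf$ and $v\in\X(S)$.
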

\indent Using this and Corollary \ref{CoroD} we get the following corollary.
\begin{Cor}\label{Thm4}
 Let $S$ be an orientable surface of genus $g \geq 3$, $n \geq 0$ punctures and empty boundary, and $\fun{\varphi}{\ccomp{S}}{\ccomp{S}}$ be an edge-preserving map such that $\varphi|_{\X(S)}$ is locally injective. Then $\varphi$ is induced by a (unique) homeomorphism.
\end{Cor}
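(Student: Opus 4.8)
The plan is to apply Corollary \ref{CoroD} directly, with $\Gamma = \ccomp{S}$ and $Y = \X(S)$, and then convert the resulting automorphism into a homeomorphism via Ivanov's theorem. First I would check the three hypotheses of Corollary \ref{CoroD}. The ambient graph $\ccomp{S}$ is connected, since $\kappa(S) = 3g-3+n \geq 6$ for $g \geq 3$ and the curve graph is connected whenever $\kappa(S) \geq 2$. The subgraph $\X(S)$ is a rigid set: this is exactly the content of Theorems 5.1 and 6.1 of \cite{Ara1}, and the notion of rigidity proved there coincides with the abstract definition introduced in Section \ref{chap3}. The third hypothesis, that $\varphi|_{\X(S)}$ be locally injective, is assumed outright.

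The only remaining point before invoking the corollary is that $\varphi$ should be regarded as a map out of the full rigid expansion $\X(S)^{\infty}$. Here Theorem B of \cite{JHH1} supplies the crucial input $\X(S)^{\infty} = \ccomp{S}$, so that $\fun{\varphi}{\ccomp{S}}{\ccomp{S}}$ is literally a map $\X(S)^{\infty} \rightarrow \ccomp{S}$ as Corollary \ref{CoroD} requires. Applying the corollary, I obtain an automorphism $\phi \in \Aut{\ccomp{S}}$ whose restriction to $\X(S)^{\infty} = \ccomp{S}$ equals $\varphi$; equivalently, $\varphi = \phi$ is itself an automorphism of $\ccomp{S}$, unique up to the pointwise stabilizer $\stab{\X(S)}$.

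To upgrade this to a unique homeomorphism I would invoke Ivanov's theorem, which for genus at least $2$ identifies $\Aut{\ccomp{S}}$ with $\EMod{S}$. Under this identification $\phi$ is induced by a self-homeomorphism of $S$, and the ambiguity in $\phi$ is measured by $\stab{\X(S)}$, which by construction of $\X(S)$ is trivial in $\EMod{S}$, hence trivial in $\Aut{\ccomp{S}}$. Consequently $\phi$ is the unique automorphism extending $\varphi|_{\X(S)}$, and the homeomorphism inducing it is unique up to isotopy. This yields the asserted conclusion that $\varphi$ is induced by a unique homeomorphism.

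The assembly is essentially formal: all the substantive work has been front-loaded into the rigidity of $\X(S)$ from \cite{Ara1}, the exhaustion $\X(S)^{\infty} = \ccomp{S}$ from \cite{JHH1}, and the abstract extension principle of Corollary \ref{CoroD}. The only step demanding genuine care is the uniqueness clause, where one must pass correctly between the pointwise stabilizer of $\X(S)$ computed inside $\EMod{S}$ and the one computed inside $\Aut{\ccomp{S}}$, using Ivanov's isomorphism, so that triviality of the stabilizer delivers uniqueness of both the automorphism and the homeomorphism it induces.
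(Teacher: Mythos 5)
Your proposal is correct and follows exactly the route the paper takes: Corollary \ref{CoroD} applied to the rigid set $\X(S)$ of \cite{Ara1}, combined with the exhaustion $\X(S)^{\infty} = \ccomp{S}$ from Theorem B of \cite{JHH1}, with uniqueness coming from the trivial pointwise stabilizer of $\X(S)$ and the identification $\Aut{\ccomp{S}} \cong \EMod{S}$. The paper states this as an immediate consequence without spelling out the Ivanov step, which you make explicit; no discrepancy.
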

\indent As we prove in the following section, this can be generalised even further.
\section{Edge-preserving maps}\label{chap3sec3}
\indent This section is organized as follows: In Subsection \ref{chap3sec3subsec1} we first give some definitions to create a ``generalisation'' of superinjectivity (recall this means that curves that intersect are mapped to curves that intersect) and prove that an edge-preserving map $\fun{\varphi}{\ccomp{S_{1}}}{\ccomp{S_{2}}}$ with $\kappa(S_{1}) \geq \kappa(S_{2})$ satisfies this generalisation. In Subsection \ref{chap3sec3subsec2} we obtain enough topological data from $\varphi$ to prove that $S_{1}$ is homeomorphic to $S_{2}$. Finally, we prove that it is possible to apply Corollary \ref{Thm4} so that $\varphi$ is induced by a homeomorphism. Note that many of the proofs in these subsections are inspired either partially or in spirit by Shackleton's work in \cite{Shack}.
\subsection{Farey maps}\label{chap3sec3subsec1}
\indent We say a set of curves $A \subset \ccomp{S}$ fills a subsurface $N \subset S$ if $N \backslash A$ is the disjoint union of punctured closed discs (if $N$ has nonempty boundary), open discs, and open punctured discs.\\ 
\indent Let $\alpha, \beta \in \ccomp{S_{1}}$. We say $\alpha$ and $\beta$ are \textit{Farey neighbours} if they fill a subsurface $N \subset S_{1}$ of complexity one and either $i(\alpha,\beta)=1$ (if $N$ has positive genus) or $i(\alpha,\beta)=2$ (if $N$ has genus zero). Note that this means that $\alpha$ and $\beta$ are adjacent vertices in $\ccomp{N}$, since $N$ has complexity one.\\
\indent Let $\alpha$ and $\beta$ be Farey neighbours and $N$ be a regular neighbourhood of $\{\alpha,\beta\}$; we say they are \textit{toroidal-Farey neighbours} if $N$ has genus $1$ and we say they are \textit{spherical-Farey neighbours} if $N$ has genus $0$. See Figure \ref{ExamplesFareyNeighboursSec3} for an example.
\begin{figure}[h]
 \begin{center}
  \resizebox{10cm}{!}{\input{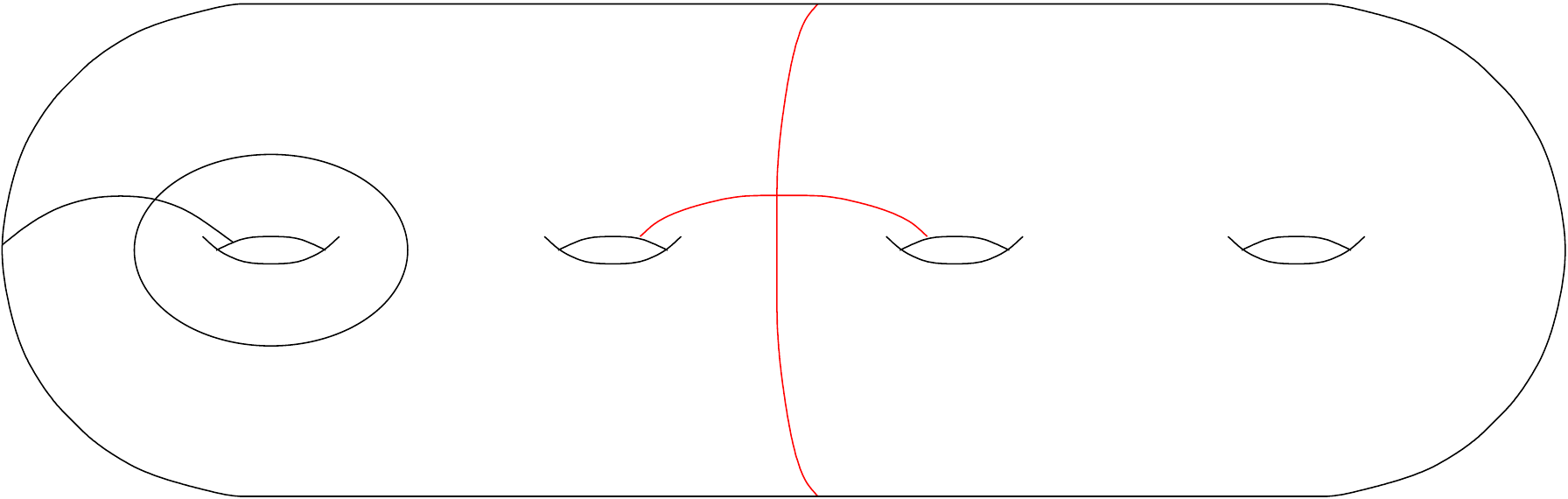_t}} \caption{Examples of toroidal-Farey neighbours in black, and spherical-Farey neighbours in red.}\label{ExamplesFareyNeighboursSec3}
 \end{center}
\end{figure}\\
\indent Let $\fun{\varphi}{\ccomp{S_{1}}}{\ccomp{S_{2}}}$ be a simplicial map. We say $\varphi$ is a \textit{toroidal-Farey map} if for every pair of toroidal-Farey neighbours $\alpha$ and $\beta$, we have that $i(\varphi(\alpha),\varphi(\beta)) \neq 0$. On the other hand, we say $\varphi$ is a \textit{spherical-Farey map} if for every pair of spherical-Farey neighbours $\alpha$ and $\beta$, we have that $i(\varphi(\alpha),\varphi(\beta)) \neq 0$. Finally, we say $\varphi$ is a \textit{Farey map} if it is both toroidal-Farey and spherical-Farey. Note that a (toroidal, or spherical, or both) Farey map is a generalisation of a superinjective map.\\
\indent Now we give some definitions and prove several technical lemmas to prove that if $\varphi$ is an edge-preserving map, then it is also a Farey map.
\begin{Lema}[cf. Lemma 5 in \cite{Shack}]\label{kS1equalkS2}
 Let $S_{1} = S_{g_{1},n_{1}}$ and $S_{2} = S_{g_{2},n_{2}}$ be orientable surface of finite topological type, with $g_{1} \geq 3$, empty boundary, and $n_{1},n_{2} \geq 0$ punctures; let also $\fun{\varphi}{\ccomp{S_{1}}}{\ccomp{S_{2}}}$ be an edge-preserving map. Then $\varphi$ maps multicurves on $S_{1}$ to multicurves of the same cardinality on $S_{2}$. In particular $\kappa(S_{2}) \geq \kappa(S_{1})$.
\end{Lema}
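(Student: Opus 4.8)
The plan is to exploit the fact that an edge-preserving map is automatically injective on complete subgraphs, and then to translate completeness on the source side back into disjointness on the target side. First I would record the basic observation that, since $\ccomp{S_2}$ is a simplicial graph and hence has no loops, whenever $u \neq v$ are adjacent in $\ccomp{S_1}$ their images are adjacent and therefore distinct; so $\varphi$ is injective on every complete subgraph. Because $g_1 \geq 3$ forces $\kappa(S_1) \geq 6 > 1$, adjacency in $\ccomp{S_1}$ means disjointness, so any multicurve $M$ on $S_1$ spans a complete subgraph, and consequently $\varphi|_M$ is injective with image a complete subgraph of $\ccomp{S_2}$ of cardinality $|M|$.

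The one delicate point is whether a complete subgraph of $\ccomp{S_2}$ is genuinely a multicurve: this holds precisely when $\kappa(S_2) > 1$, since only then is adjacency in $\ccomp{S_2}$ recorded by disjointness rather than by minimal intersection. So I would rule out the case $\kappa(S_2) = 1$ directly. Choose a pants decomposition $P$ of $S_1$; by the discussion in Section~\ref{prelim} this is a complete subgraph with $|P| = \kappa(S_1) \geq 6$, and its image $\varphi(P)$ is then a complete subgraph of $\ccomp{S_2}$ on at least $6$ vertices. If $\kappa(S_2) = 1$, then $\ccomp{S_2}$ is the Farey graph, whose maximal complete subgraphs are triangles (it contains no $K_4$), contradicting the existence of a complete subgraph on at least $6$ vertices. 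Hence $\kappa(S_2) > 1$.

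With $\kappa(S_2) > 1$ in hand, adjacency in $\ccomp{S_2}$ is disjointness, so the image under $\varphi$ of any multicurve is a set of pairwise distinct, pairwise disjoint curves, that is, a multicurve, and the injectivity noted above shows it has the same cardinality; this gives the first assertion. For the ``in particular'' clause I would reuse $P$: its image $\varphi(P)$ is a multicurve on $S_2$ of cardinality $\kappa(S_1)$, and since every multicurve on $S_2$ has at most $\kappa(S_2)$ elements (a pants decomposition being a maximal multicurve of exactly that size), we conclude $\kappa(S_1) = |\varphi(P)| \leq \kappa(S_2)$.

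I expect the only real obstacle to be the bookkeeping around the complexity-one case for $S_2$, where the curve graph is the Farey graph and its edges encode intersection rather than disjointness, so that complete subgraphs are \emph{not} multicurves. The observation that dispatches it is that the Farey graph has clique number $3$, whereas $S_1$ supplies a complete subgraph of size $\kappa(S_1) \geq 6$; everything else follows formally from the no-loops injectivity of $\varphi$ on complete subgraphs.
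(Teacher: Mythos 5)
Your proposal is correct and follows essentially the same route as the paper: edge-preservation plus the absence of loops makes $\varphi$ injective on complete subgraphs, multicurves on $S_{1}$ span complete subgraphs, and a pants decomposition of $S_{1}$ forces $\kappa(S_{2}) \geq \kappa(S_{1})$. The paper's proof is terser and silently assumes that a complete subgraph of $\ccomp{S_{2}}$ is a multicurve; your explicit exclusion of the case $\kappa(S_{2}) = 1$ via the clique number of the Farey graph fills in a detail the paper leaves implicit.
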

\begin{proof}
 \indent Let $M$ be a multicurve of cardinality $m > 1$. Since $\varphi$ is edge-preserving, then complete subgraphs are mapped to complete subgraphs with vertex sets of the same cardinality. This implies that $\varphi(M)$ is a multicurve, and $\varphi(M)$ has the same cardinality as $M$.\\
 \indent In particular if $P$ is a pants decomposition, then $\varphi(P)$ is a multicurve of cardinality $\kappa(S_{1})$, thus $\kappa(S_{2})$ is at least $\kappa(S_{1})$.
\end{proof}
\indent Armed with this lemma, if $\kappa(S_{1}) \geq \kappa(S_{2})$ then $S_{1}$ and $S_{2}$ must have the same complexity. This in particular gives the following corollary.
\begin{Cor}\label{corkS1equalkS2}
 Let $S_{1} = S_{g_{1},n_{1}}$ and $S_{2} = S_{g_{2},n_{2}}$ be orientable surface of finite topological type, with $g_{1} \geq 3$, empty boundary, and $n_{1},n_{2} \geq 0$ punctures; let also $\fun{\varphi}{\ccomp{S_{1}}}{\ccomp{S_{2}}}$ be an edge-preserving map. Then, $\varphi$ maps pants decompositions to pants decompositions if and only if $\kappa(S_{1}) \geq \kappa(S_{2})$.
\end{Cor}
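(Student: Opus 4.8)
The plan is to deduce both implications directly from Lemma \ref{kS1equalkS2}, whose two conclusions---that $\varphi$ preserves the cardinality of multicurves and that $\kappa(S_2) \geq \kappa(S_1)$---will do essentially all the work. Beyond the lemma I will only invoke the standard topological fact that, on a surface of complexity at least one, a multicurve is maximal (equivalently, a pants decomposition) exactly when its cardinality equals the complexity of the surface. Both $S_1$ and $S_2$ satisfy this hypothesis, since $g_1 \geq 3$ forces $\kappa(S_1) \geq 6$ and Lemma \ref{kS1equalkS2} gives $\kappa(S_2) \geq \kappa(S_1)$.

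For the implication ($\Leftarrow$) I would assume $\kappa(S_1) \geq \kappa(S_2)$ and combine it with $\kappa(S_2) \geq \kappa(S_1)$ from the lemma to conclude $\kappa(S_1) = \kappa(S_2)$. Then, given any pants decomposition $P$ of $S_1$ (so $|P| = \kappa(S_1)$), the lemma yields that $\varphi(P)$ is a multicurve of $S_2$ with $|\varphi(P)| = |P| = \kappa(S_2)$; by the cardinality criterion above, $\varphi(P)$ is maximal, hence a pants decomposition of $S_2$.

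For the converse ($\Rightarrow$) I would assume $\varphi$ sends pants decompositions to pants decompositions and fix a pants decomposition $P$ of $S_1$. Then $|P| = \kappa(S_1)$ and, by hypothesis, $\varphi(P)$ is a pants decomposition of $S_2$, so $|\varphi(P)| = \kappa(S_2)$; since the lemma gives $|\varphi(P)| = |P|$, we obtain $\kappa(S_1) = \kappa(S_2)$, and in particular $\kappa(S_1) \geq \kappa(S_2)$.

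I do not anticipate any real obstacle: the cardinality-preservation in Lemma \ref{kS1equalkS2} already fixes $|\varphi(P)|$, and maximality of a multicurve is detected purely by its cardinality once the complexity is at least one. The only point I would check carefully is that this cardinality criterion applies to $S_2$ as well as $S_1$, which is ensured by $\kappa(S_2) \geq \kappa(S_1) \geq 6 > 1$.
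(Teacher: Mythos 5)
Your proposal is correct and follows essentially the same route as the paper: the paper derives this corollary immediately from Lemma \ref{kS1equalkS2} by noting that the hypothesis $\kappa(S_{1}) \geq \kappa(S_{2})$ combined with the lemma's conclusion $\kappa(S_{2}) \geq \kappa(S_{1})$ forces equality of complexities, after which cardinality preservation of multicurves does the rest. Your explicit appeal to the criterion that a multicurve is a pants decomposition exactly when its cardinality equals the complexity is the same (implicit) mechanism the paper relies on, so there is nothing to add.
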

\begin{Rem}\label{kS1geq6}
 Note that if we assume $g_{1} \geq 3$ then $\kappa(S_{1}) \geq 6$; if we also assume $\kappa(S_{1}) \geq \kappa(S_{2})$, by Lemma \ref{kS1equalkS2} we have $\kappa(S_{2}) = \kappa(S_{1}) \geq 6$.
\end{Rem}
\indent We prove now that any edge-preserving map is a toroidal-Farey map if we add the complexity condition mentioned above.
\begin{Lema}\label{EdgePresToroidal}
 Let $S_{1} = S_{g_{1},n_{1}}$ and $S_{2} = S_{g_{2},n_{2}}$ be orientable surfaces of finite topological type, with $g_{1} \geq 3$, empty boundary, and $n_{1},n_{2} \geq 0$ punctures, such that $\kappa(S_{1}) \geq \kappa(S_{2})$. If $\fun{\varphi}{\ccomp{S_{1}}}{\ccomp{S_{2}}}$ is an edge-preserving map, we have that $\varphi$ is a toroidal-Farey map.
\end{Lema}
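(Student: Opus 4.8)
The plan is to argue by contradiction: assume $(\alpha,\beta)$ are toroidal-Farey neighbours with $i(\varphi(\alpha),\varphi(\beta)) = 0$ and derive a contradiction. The engine of the proof is a \emph{pants-decomposition swap}. Let $N$ be a regular neighbourhood of $\{\alpha,\beta\}$, so that $N$ is a one-holed torus with boundary $\delta$, and let $Q$ be a pants decomposition of $S_{1} \backslash N$. Then $\hat{Q} \ColonEqq Q \cup \{\delta\}$ is a multicurve with $|\hat{Q}| = \kappa(S_{1}) - 1$, and both $P_{\alpha} = \hat{Q} \cup \{\alpha\}$ and $P_{\beta} = \hat{Q} \cup \{\beta\}$ are pants decompositions of $S_{1}$, since $N$ has complexity one and both $\alpha$ and $\beta$ are essential in $N$.

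First I would record the only consequence of edge-preservation needed here. By Corollary \ref{corkS1equalkS2} (applicable since $\kappa(S_{1}) \geq \kappa(S_{2})$) the map $\varphi$ sends pants decompositions to pants decompositions, and by Lemma \ref{kS1equalkS2} it is injective on each of them; hence $\varphi(\hat{Q})$ is a multicurve of cardinality $\kappa(S_{2}) - 1$ with $\varphi(\alpha),\varphi(\beta) \notin \varphi(\hat{Q})$. Now suppose $\varphi(\alpha) \neq \varphi(\beta)$ while $i(\varphi(\alpha),\varphi(\beta)) = 0$. Then $\varphi(\hat{Q}) \cup \{\varphi(\alpha),\varphi(\beta)\}$ is a multicurve of cardinality $\kappa(S_{2}) + 1$, which is impossible. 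This already shows that \emph{for every} toroidal-Farey pair $(\mu,\nu)$ one has either $\varphi(\mu) = \varphi(\nu)$ or $i(\varphi(\mu),\varphi(\nu)) \neq 0$; I will use this implication twice.

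The remaining and genuinely delicate case is the \emph{collapse} $\varphi(\alpha) = \varphi(\beta)$: here the swap is completely symmetric in $\alpha$ and $\beta$ (there is a homeomorphism of $S_{1}$ fixing $\delta$ and $Q$ and exchanging $\alpha,\beta$), so no counting argument internal to $N$ can separate their images. This is the main obstacle, and I would break the symmetry with an auxiliary curve. Using $g_{1} \geq 3$, so that $S_{1} \backslash N$ has genus at least two, I choose a curve $\gamma$ with $i(\alpha,\gamma) = 1$ and $i(\beta,\gamma) = 0$, with $\gamma \neq \beta$: concretely, cutting $S_{1}$ along $\beta$ leaves a connected surface of genus $g_{1}-1 \geq 2$ in which $\alpha$ becomes an arc, and one can route a simple closed curve through one of its handles meeting that arc exactly once. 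Then $\{\alpha,\gamma\}$ fills a one-holed torus, so $(\alpha,\gamma)$ is again a toroidal-Farey pair.

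Finally I would combine the two facts. Since $\beta$ and $\gamma$ are disjoint and distinct, edge-preservation gives $\varphi(\gamma) \neq \varphi(\beta)$ and $i(\varphi(\beta),\varphi(\gamma)) = 0$. Substituting the assumed collapse $\varphi(\alpha) = \varphi(\beta)$ yields $\varphi(\alpha) \neq \varphi(\gamma)$ together with $i(\varphi(\alpha),\varphi(\gamma)) = 0$; but this is exactly the configuration excluded by the implication of the second paragraph applied to the toroidal-Farey pair $(\alpha,\gamma)$. This contradiction discards the collapse, and together with the swap applied to $(\alpha,\beta)$ it forces $i(\varphi(\alpha),\varphi(\beta)) \neq 0$, i.e. $\varphi$ is a toroidal-Farey map. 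The only points demanding care beyond routine topology are the existence and placement of $\gamma$, and the structural observation that the collapse cannot be ruled out by the swap alone and must be defeated by passing to a second, asymmetrically chosen torus.
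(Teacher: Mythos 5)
Your proposal is correct and follows essentially the same route as the paper's proof: first use the two pants decompositions $\hat{Q}\cup\{\alpha\}$ and $\hat{Q}\cup\{\beta\}$ to establish the dichotomy ``$\varphi(\alpha)=\varphi(\beta)$ or $i(\varphi(\alpha),\varphi(\beta))\neq 0$'' for any toroidal-Farey pair, then rule out the collapse with an auxiliary curve $\gamma$ disjoint from one of the two curves and meeting the other once, applying the dichotomy to the second Farey pair. The only cosmetic differences are that you justify the dichotomy by a cardinality count on multicurves where the paper invokes the complexity-one complementary piece, and you attach $\gamma$ to $\alpha$ rather than to $\beta$; neither changes the argument.
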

\begin{proof}
 Let $\alpha$ and $\beta$ be any two toroidal-Farey neighbours, we need to prove that $i(\varphi(\alpha),\varphi(\beta)) \neq 0$.\\
 \indent Let $M$ be a multicurve on $S_{1}$ such that $\{\alpha\} \cup M$ and $\{\beta\} \cup M$ are pants decompositions. Then by Corollary \ref{corkS1equalkS2} $\varphi(\{\alpha\} \cup M)$ and $\varphi(\{\beta\} \cup M)$ are pants decompositions, which implies that $\varphi(\alpha)$ and $\varphi(\beta)$ are disjoint from every element in $\varphi(M)$. Thus, there exists a complexity-one subsurface of $S_{2}$ containing as essential curves both $\varphi(\alpha)$ and $\varphi(\beta)$. So, either $\varphi(\alpha) = \varphi(\beta)$ or $i(\varphi(\alpha),\varphi(\beta)) \neq 0$.\\
 \indent To prove that $\varphi(\alpha) \neq \varphi(\beta)$, let $\gamma \in \ccomp{S_{1}}$ be such that $i(\beta,\gamma) = 1$ and $\alpha$ is disjoint from $\gamma$. See Figure \ref{ExampleEdgeisFarey} for an example.
 \begin{figure}[h]
  \begin{center}
   \resizebox{10cm}{!}{\input{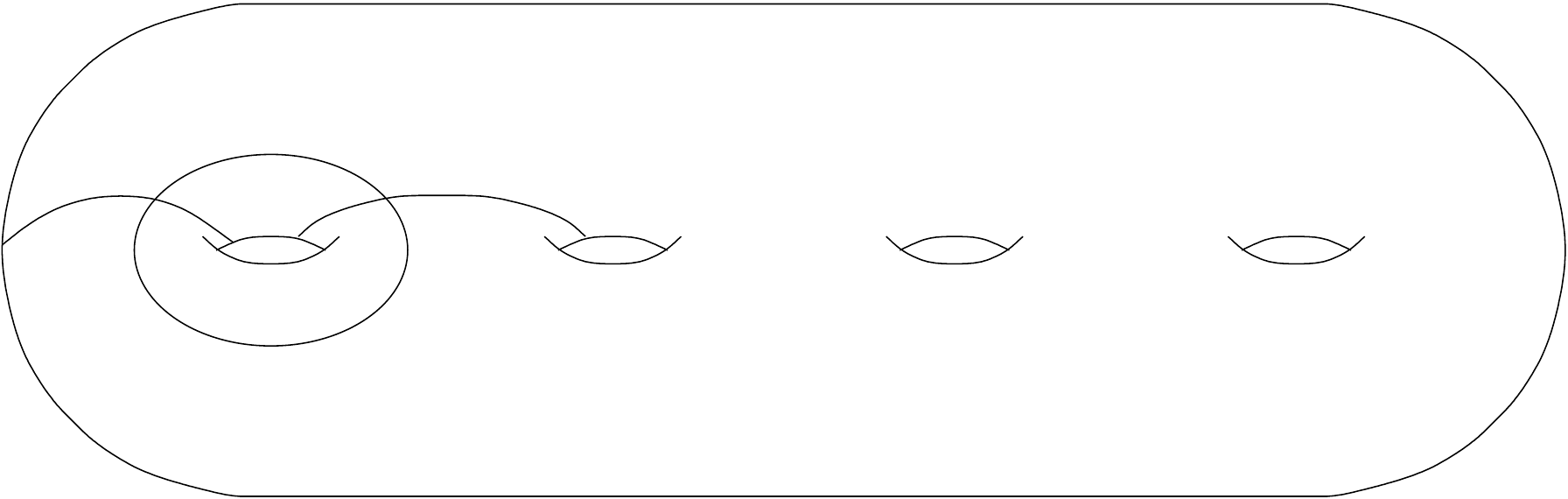_t}} \caption{Examples of curves $\alpha$, $\beta$ and $\gamma$, with $\alpha$ and $\gamma$ disjoint, and $\beta$ and $\gamma$ toroidal-Farey neighbours.}\label{ExampleEdgeisFarey}
  \end{center}
 \end{figure}\\
 \indent Hence $\varphi(\alpha)$ is disjoint from $\varphi(\gamma)$ and, by the same arguments as above, either $\varphi(\beta) = \varphi(\gamma)$ or $i(\varphi(\beta),\varphi(\gamma)) \neq 0$. Neither of these options can happen if $\varphi(\alpha) = \varphi(\beta)$, therefore $\varphi(\alpha) \neq \varphi(\beta)$, which implies that $i(\varphi(\alpha), \varphi(\beta)) \neq 0$, as desired.
\end{proof}
\indent Finally, this lemma implies that to prove that an edge-preserving map under the complexity conditions used above is a Farey map, we only need to prove now that it is a spherical-Farey map. This is done in the following lemma, but first we give a brief (and technical) definition used in the proof.\\
\indent Let $\alpha$ and $\beta$ be two curves on $S_{1}$ which are spherical Farey neighbours with a closed regular neighbourhood $N$, and let $\gamma$ and $\gamma^{\prime}$ be two boundary curves of $N$; we say $\gamma$ and $\gamma^{\prime}$ are \textit{connected outside} $N$ if there exists a proper arc in $S_{1} \backslash \mathrm{int}(N)$ with one endpoint in $\gamma$ and the other in $\gamma^{\prime}$.
\begin{Lema}\label{toroidalSpherical}
 Let $S_{1} = S_{g_{1},n_{1}}$ and $S_{2} = S_{g_{2},n_{2}}$ be orientable surface of finite topological type, with $g_{1} \geq 3$, empty boundary, and $n_{1},n_{2} \geq 0$ punctures, such that $\kappa(S_{1}) \geq \kappa(S_{2})$. If $\fun{\varphi}{\ccomp{S_{1}}}{\ccomp{S_{2}}}$ is an edge-preserving map, then $\varphi$ is a Farey map.
\end{Lema}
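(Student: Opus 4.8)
The plan is to prove that $\varphi$ is a \emph{spherical}-Farey map; since Lemma~\ref{EdgePresToroidal} already gives that it is toroidal-Farey, the two together yield that $\varphi$ is a Farey map. So let $\alpha$ and $\beta$ be spherical-Farey neighbours, filling a subsurface $N\cong S_{0,4}$ with $i(\alpha,\beta)=2$; the goal is $i(\varphi(\alpha),\varphi(\beta))\neq 0$. First I would choose a multicurve $M$ disjoint from both $\alpha$ and $\beta$ (for instance the essential boundary curves of $N$ together with a pants decomposition of $S_{1}\setminus N$), so that both $\{\alpha\}\cup M$ and $\{\beta\}\cup M$ are pants decompositions of $S_{1}$. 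As $\kappa(S_{2})=\kappa(S_{1})$ by Lemma~\ref{kS1equalkS2}, Corollary~\ref{corkS1equalkS2} shows $\varphi$ sends these to pants decompositions; since they share the part $\varphi(M)$, both $\varphi(\alpha)$ and $\varphi(\beta)$ lie in the complexity-one subsurface $N'=S_{2}\setminus\varphi(M)$. Two distinct curves in a complexity-one surface always intersect, so it suffices to rule out $\varphi(\alpha)=\varphi(\beta)$.

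The main tool for this is the toroidal-Farey property already in hand. Suppose first that one of the two curves, say $\beta$, is non-separating in $S_{1}$. Then I would construct a curve $\gamma$ with $i(\gamma,\beta)=1$ and $i(\gamma,\alpha)=0$, that is, a toroidal-Farey neighbour of $\beta$ disjoint from $\alpha$. Since $\beta$ is non-separating, two boundary curves of $N$ on opposite sides of $\beta$ are connected outside $N$, and closing up an arc crossing $\beta$ once with such an exterior arc produces a dual curve; because $g_{1}\geq 3$ the complement $S_{1}\setminus\beta$ has genus at least two, leaving ample room to route this dual curve off the single curve $\alpha$. Now $i(\gamma,\alpha)=0$ forces $i(\varphi(\gamma),\varphi(\alpha))=0$, while Lemma~\ref{EdgePresToroidal} gives $i(\varphi(\gamma),\varphi(\beta))\neq 0$; hence $\varphi(\alpha)\neq\varphi(\beta)$. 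The same reasoning applies if instead $\alpha$ is non-separating.

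The hard part will be the case where \emph{both} $\alpha$ and $\beta$ are separating: then every curve meets each of them an even number of times, so no toroidal-Farey neighbour of $\alpha$ or of $\beta$ exists and the construction above is unavailable for both. Here I would exploit the genus hidden in the complement of $N$. As $N\cong S_{0,4}$ has genus zero and $g_{1}\geq 3$, there is a non-separating curve $x$ of $S_{1}$ disjoint from $N$, hence from $\alpha$ and $\beta$. Band-summing $\alpha$ with $x$ along an arc disjoint from $\beta$ yields a curve $\zeta$ that is disjoint from $\alpha$, satisfies $i(\zeta,\beta)=i(\alpha,\beta)=2$ with exactly the same local intersection pattern as $\alpha$ (so that a regular neighbourhood of $\zeta\cup\beta$ is again an $S_{0,4}$ and $\zeta,\beta$ are spherical-Farey neighbours), and is non-separating because its $\mathbb{Z}/2$-homology class is $[x]\neq 0$. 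Thus $(\zeta,\beta)$ is a spherical-Farey pair with $\zeta$ non-separating, and the previous paragraph applies to it: $i(\varphi(\zeta),\varphi(\beta))\neq 0$. On the other hand $i(\zeta,\alpha)=0$ gives $i(\varphi(\zeta),\varphi(\alpha))=0$. If $\varphi(\alpha)=\varphi(\beta)$ these contradict one another, so once more $\varphi(\alpha)\neq\varphi(\beta)$.

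Combining the cases, $\varphi(\alpha)\neq\varphi(\beta)$ always, whence $i(\varphi(\alpha),\varphi(\beta))\neq 0$ and $\varphi$ is spherical-Farey; with Lemma~\ref{EdgePresToroidal} this makes $\varphi$ a Farey map. I expect the only real obstacle to be the both-separating case, where the hypothesis $g_{1}\geq 3$ is used essentially to replace a separating curve by a non-separating spherical neighbour and thereby reduce to the toroidal detection already available; the points needing genuine care are verifying that the band-sum keeps $\zeta$ and $\beta$ spherical-Farey neighbours and that $\zeta$ stays disjoint from $\alpha$.
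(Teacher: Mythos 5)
Your overall strategy is the right one and matches the paper's: reduce to showing $\varphi(\alpha)\neq\varphi(\beta)$ by trapping the images in a complexity-one subsurface via pants decompositions, and then separate the images using a toroidal-Farey neighbour of one curve that is disjoint from the other. The gap is in your Case 1. The claim that ``since $\beta$ is non-separating \dots there is ample room to route this dual curve off the single curve $\alpha$'' is false in general: the obstruction to avoiding $\alpha$ lives \emph{inside} $N$, not in the complement of $\beta$. The four complementary regions of $\alpha\cup\beta$ in $N\cong S_{0,4}$ are annuli forming a $4$-cycle whose consecutive regions are separated alternately by arcs of $\alpha$ and arcs of $\beta$; an arc in $N$ crossing $\beta$ exactly once and missing $\alpha$ must join two boundary curves that are \emph{adjacent across a $\beta$-arc} in this cycle. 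If the only boundary curves of $N$ lying on opposite sides of $\beta$ that are connected outside $N$ form an \emph{antipodal} pair of the cycle, then every simple closed curve meeting $\beta$ exactly once also meets $\alpha$, even though $\beta$ is non-separating. This configuration is realizable: in $S_{3,0}$, embed $N$ so that two antipodal boundary curves cobound an annulus in $S_{1}\setminus\mathrm{int}(N)$ and the other two each bound a one-holed torus. Then $\alpha$ and $\beta$ are both non-separating spherical-Farey neighbours, neither admits a dual curve disjoint from the other, and your Case 2 is never triggered because neither curve is separating. (Your Case 2 also quotes Case 1 as a black box for the pair $(\zeta,\beta)$, so it inherits the same gap, although there a dual curve to $x$ inside its genus-one complementary piece does the job directly.)

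The fix is to change the case division from ``separating versus non-separating'' to the one the paper actually uses: whether some pair of boundary curves of $N$ that are adjacent in the above $4$-cycle is connected outside $N$. When such a pair exists, the direct construction of a curve meeting exactly one of $\alpha,\beta$ once and missing the other goes through. When no such pair exists, one uses $g_{1}\geq 3$ to find a component of $S_{1}\setminus N$ of positive genus and replaces one of the two curves by an auxiliary spherical-Farey neighbour (your band-sum $\zeta$ is exactly the right kind of object) for which the first case applies. So your auxiliary-curve idea is sound and is in fact needed in the ``bad'' subcase regardless of whether $\alpha$ and $\beta$ separate; it is the criterion deciding when you may skip it that is wrong as stated.
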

\begin{proof}
 By Lemma \ref{EdgePresToroidal}, it suffices to prove that $\varphi$ is also a spherical-Farey map.\\
 \indent Let $\alpha,\beta \in \ccomp{S_{1}}$ be spherical Farey neighbours. We proceed as in Lemma \ref{EdgePresToroidal}. Let $M$ be a multicurve of $\kappa(S_{1}) -1$ elements, such that $M \cup \{\alpha\}$ and $M \cup \{\beta\}$ are pants decompositions. By Lemma \ref{kS1equalkS2} we know that $\varphi(M \cup \{\alpha\})$ and $\varphi(M \cup \{\beta\})$ are also pants decompositions, thus $\varphi(\alpha)$ and $\varphi(\beta)$ are contained in a complexity-one subsurface of $S_{2}$. We then only need to prove that $\varphi(\alpha) \neq \varphi(\beta)$.\\
 \indent Let $N$ be a closed regular neighbourhood of $\alpha$ and $\beta$, with $\gamma_{0}$, $\gamma_{1}$, $\gamma_{2}$ and $\gamma_{3}$ its boundary curves. See figure \ref{RegularNeighbourhoodForSphericalFareyfig1}. Note that, since $g_{1} \geq 3$, at least three of the $\gamma_{i}$ must be different.
 \begin{figure}[h]
 \begin{center}
  \resizebox{5cm}{!}{\input{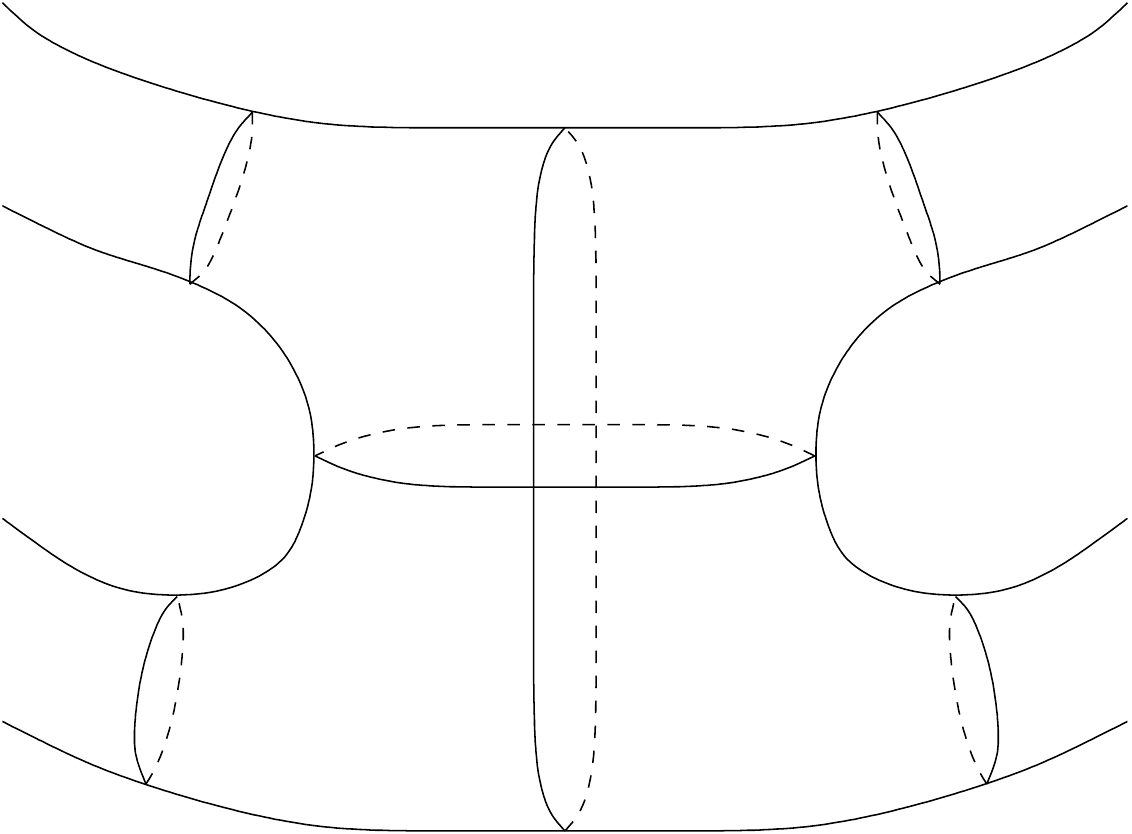_t}} \caption{The regular neighbourhood $N$, the curves $\alpha$ and $\beta$, and the boundary curves $\gamma_{0}$, $\gamma_{1}$, $\gamma_{2}$, $\gamma_{3}$.} \label{RegularNeighbourhoodForSphericalFareyfig1}
 \end{center}
\end{figure}\\
\indent We separate this part of the proof according to whether $\gamma_{i}$ is connected outside $N$ to $\gamma_{i+1}$.\\
\textit{Subcase 1:} If $\gamma_{i}$ is connected outside $N$ to $\gamma_{i+1}$, we can use a proper arc in $S \backslash N$ with endpoints in $\gamma_{i}$ and $\gamma_{i+1}$, to find a curve $\delta$ such that $\beta$ and $\delta$ are disjoint and $i(\alpha,\delta) = 1$. See figure \ref{RegularNeighbourhoodForSphericalFareyfig2}.
\begin{figure}[h]
 \begin{center}
  \resizebox{5cm}{!}{\input{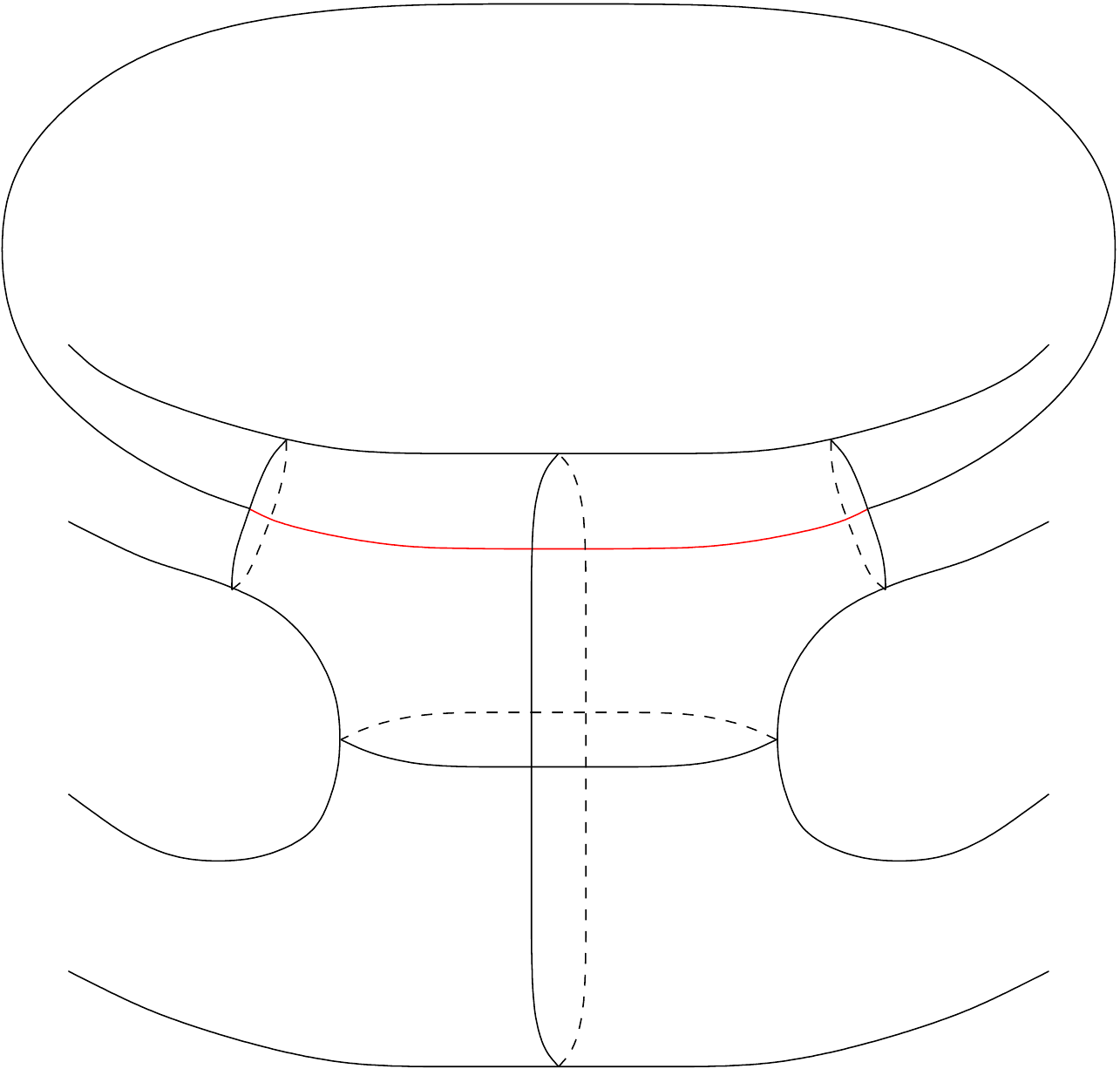_t}} \caption{The regular neighbourhood $N$, the curves $\alpha$ and $\beta$, the arc that connects $\gamma_{i}$ to $\gamma_{i+1}$ outside $N$ (in black), and the arc in $N$ that completes the curve $\delta$.} \label{RegularNeighbourhoodForSphericalFareyfig2}
 \end{center}
\end{figure}\\
\indent Then $\varphi(\delta)$ is disjoint from $\varphi(\beta)$ and $i(\varphi(\alpha),\varphi(\delta)) \neq 0$ (since $\varphi$ is a toroidal-Farey map); thus $\varphi(\alpha) \neq \varphi(\beta)$ and so $i(\varphi(\alpha),\varphi(\beta)) \neq 0$.\\
\textit{Subcase 2:} Let $\gamma_{i}$ not be connected outside $N$ to $\gamma_{i+1}$. Since $g_{1} \geq 3$, $S_{1} \backslash N$ must have a connected component of genus at least $1$. Then, let $\delta$ be a curve disjoint from $\beta$ that is a spherical Farey neighbour of $\alpha$ and satisfies the conditions of the previous subcase. See Figure \ref{toroidalFarey}. Thus $\varphi(\beta)$ is disjoint from $\varphi(\delta)$ and $i(\varphi(\alpha),\varphi(\delta)) \neq 0$; hence $\varphi(\alpha) \neq \varphi(\beta)$ and so $i(\varphi(\alpha),\varphi(\beta)) \neq 0$.
\begin{figure}[h]
\begin{center}
 \includegraphics[width=4.5cm]{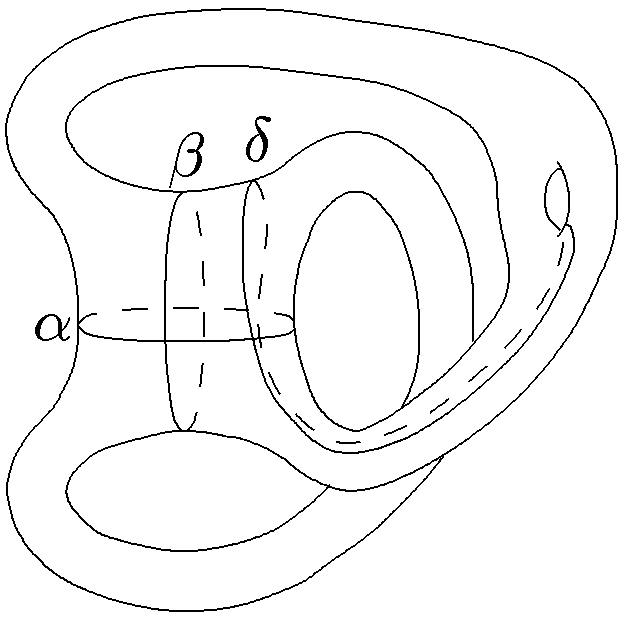}\hspace{1cm}
 \includegraphics[width=6cm]{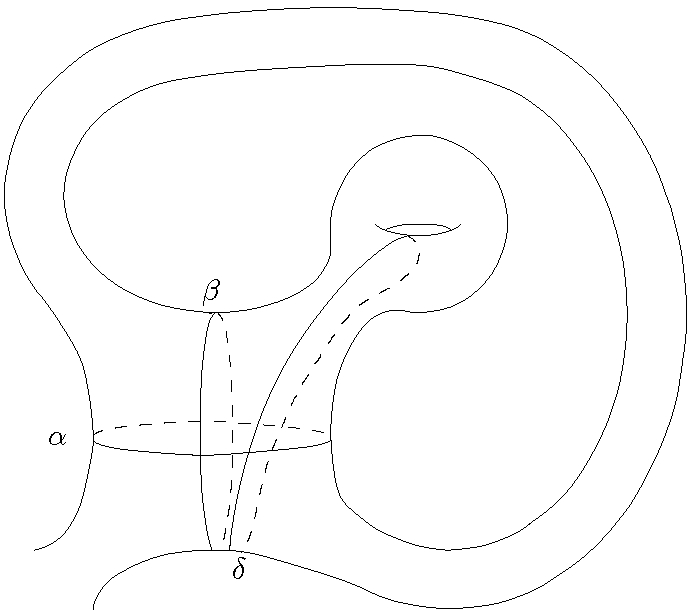}\\
 \includegraphics[width=6cm]{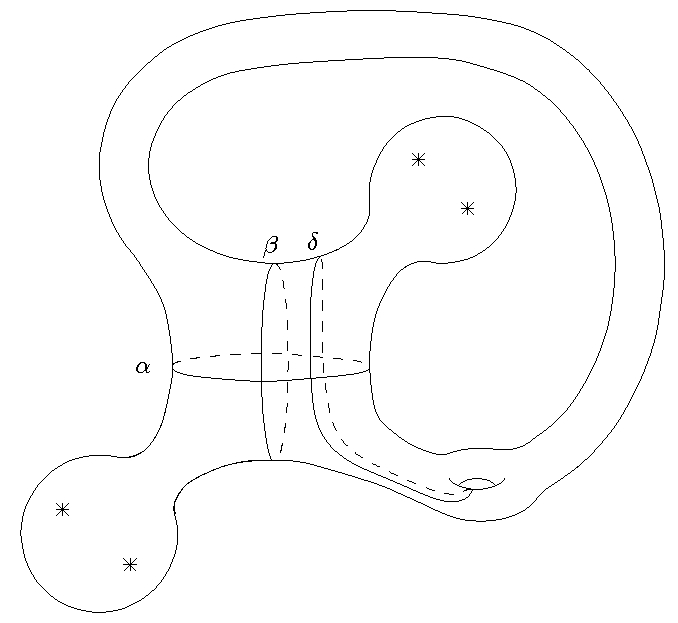}\hspace{1cm}
 \includegraphics[width=6cm]{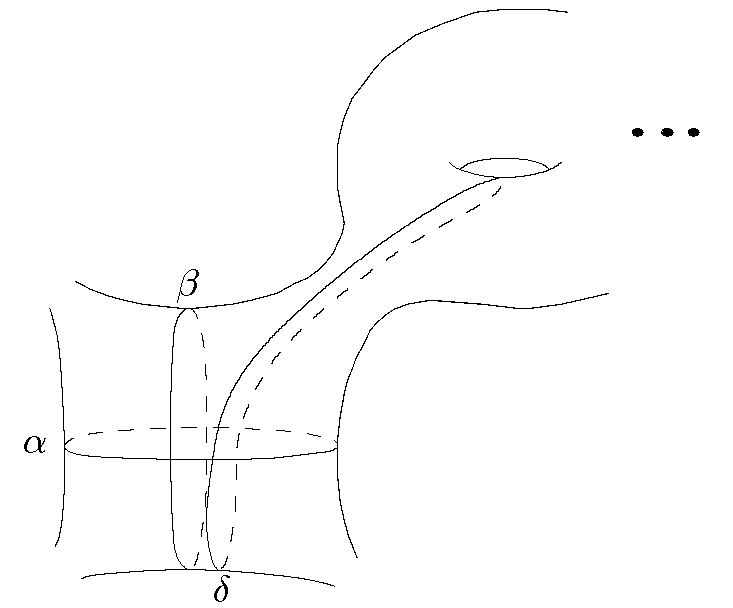}\caption{Examples of curves $\delta$ for the second subcase.} \label{toroidalFarey}
\end{center}
\end{figure}\\
\indent Therefore, $\varphi$ is both a toroidal-Farey map and a spherical-Farey map, as desired.
\end{proof}
\subsection{Topological data from $\varphi$}\label{chap3sec3subsec2}
\indent Throughout this section we assume $S_{1} = S_{g_{1},n_{1}}$ and $S_{2} = S_{g_{2},n_{2}}$ are orientable surfaces of finite topological type, with $g_{1} \geq 3$, empty boundary, and $n_{1},n_{2} \geq 0$ punctures, such that $\kappa(S_{1}) \geq \kappa(S_{2})$. We also suppose $\fun{\varphi}{\ccomp{S_{1}}}{\ccomp{S_{2}}}$ to be an edge-preserving map.\\
\indent Armed with Lemma \ref{toroidalSpherical}, in this subsection we try to obtain enough topological data from $\varphi$ to force $S_{2}$ to be homeomorphic to $S_{1}$. As was mentioned at the beginning of this chapter, note that many of the lemmas in this subsection are quite similar to those in \cite{Shack}, and while several of the proofs are analogous, others are quite different.\\
\indent Let $P$ be a pants decomposition in $S_{i}$ for some $i \in \{1,2\}$. A \textit{pair of pants subsurface induced by} $P$ is a subsurface of $S_{i}$ whose interior is homeomorphic to $S_{0,3}$ and all its bounding curves are elements of $P$. See figure \ref{ExamplesPairofpantsInducedbyPfig1}.
\begin{figure}[h]
 \begin{center}
  \resizebox{10cm}{!}{\input{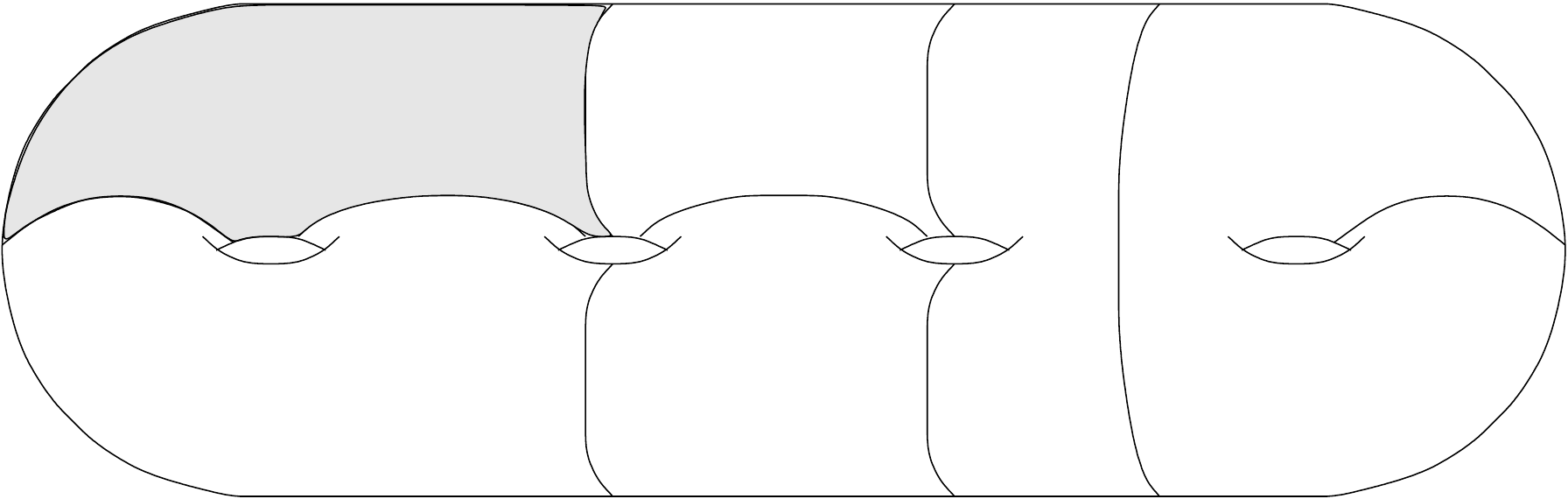_t}}\\[0.3cm]
  \resizebox{10cm}{!}{\input{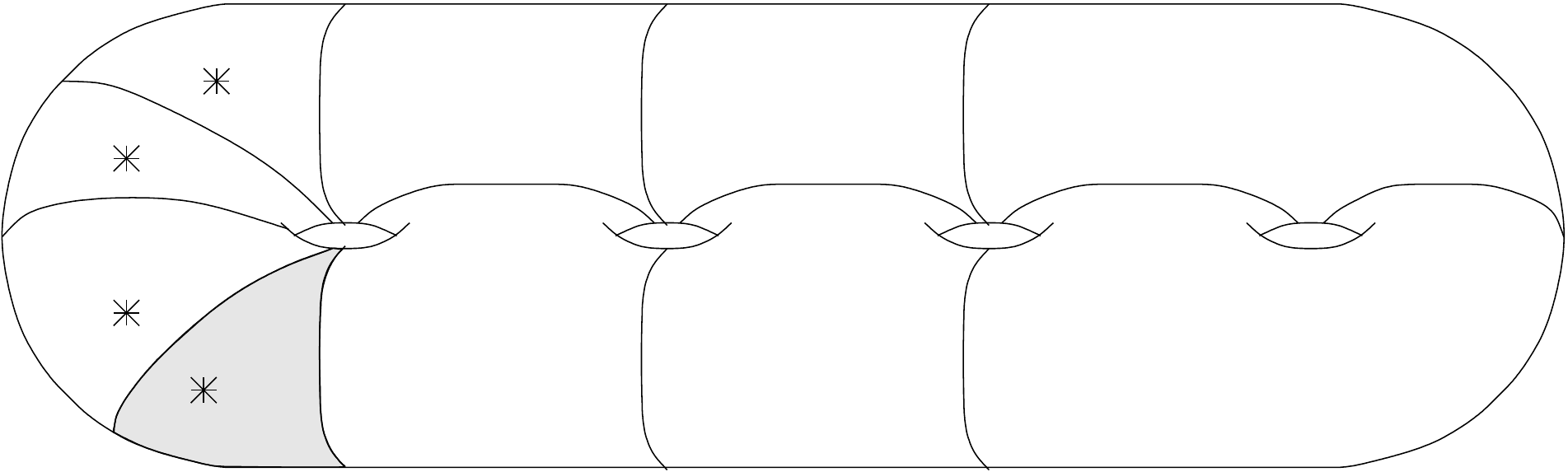_t}} \caption{Two examples of pairs of pants (shaded) induced by pants decompositions in closed surfaces (above) and punctured surfaces (below).} \label{ExamplesPairofpantsInducedbyPfig1}
 \end{center}
\end{figure}\\
\indent Let $\alpha, \beta \in P$. We say $\alpha$ and $\beta$ are adjacent with respect to $P$ if there exists a pair of pants subsurface induced by $P$, that has $\alpha$ and $\beta$ as two of its boundary curves. We define the \textit{adjacency graph of} $P$, denoted by $\acomp{P}$, as the simplicial graph whose vertex set is $P$, and two vertices span an edge if they are adjacent \wrt $P$. The adjacency graph was first introduced by Behrstock and Margalit in \cite{BehrMar}. Afterwards it was used by Shackleton in \cite{Shack}, and we use it in a similarly.\\
\indent Any edge-preserving map $\fun{\varphi}{\ccomp{S_{1}}}{\ccomp{S_{2}}}$ induces a bijective map $\varphi_{P}$ from the vertex set of $\acomp{P}$ to the vertex set of $\acomp{\varphi(P)}$, defined by $\alpha \mapsto \varphi(\alpha)$. We prove that this map is a simplicial isomorphism.
\begin{Lema}\label{adjacencyCCOMP}
 Let $P$ be a pants decomposition of $S_{1}$, and $\alpha, \beta \in P$. We have that $\alpha$ and $\beta$ are adjacent \wrt $P$ if and only if $\varphi(\alpha)$ and $\varphi(\beta)$ are adjacent \wrt $\varphi(P)$. In particular $\varphi_{P}$ is a simplicial isomorphism.
\end{Lema}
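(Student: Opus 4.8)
The plan is to reduce adjacency to a statement about the complementary components of the multicurve $M \ColonEqq P \setminus \{\alpha,\beta\}$, and then to detect this statement by means of curves that are Farey neighbours, so that it is transported by $\varphi$ through the Farey-map property (Lemma \ref{toroidalSpherical}) together with the fact that $\varphi$ preserves pants decompositions (Corollary \ref{corkS1equalkS2}) and that $\kappa(S_{2}) = \kappa(S_{1})$. First I would record the purely topological characterisation: since $M$ has $\kappa(S_{1}) - 2$ curves, the components of $S_{1} \setminus M$ have complexities summing to $2$, and $\{\alpha,\beta\}$ is a pants decomposition of the union of the pieces carrying them. Thus $\alpha$ and $\beta$ are adjacent \wrt $P$ exactly when they lie in a \emph{common} component $W_{0}$ of $S_{1} \setminus M$, which is then of complexity two; conversely, a short check of the only two complexity-two types ($S_{0,5}$ and $S_{1,2}$, counting boundary circles as punctures) shows that the two curves of any pants decomposition of such a surface always cobound a pair of pants, so ``common component'' and ``adjacent'' coincide, and the same dichotomy holds in $S_{2}$.

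Next I would prove the Farey-detection statement: $\alpha,\beta$ lie in a common component of $S_{1}\setminus M$ if and only if there is a curve $\delta$, disjoint from $M$, that is a Farey neighbour of both $\alpha$ and $\beta$. The ``only if'' direction is the content requiring a picture: inside the complexity-two piece $W_{0}$ one exhibits such a $\delta$ explicitly in each of the two surface types. The ``if'' direction is immediate, since a Farey neighbour of $\alpha$ intersects $\alpha$ and hence shares its component. With this in hand the \emph{forward} implication is clean: if $\alpha,\beta$ are adjacent, take the curve $\delta$; then $\varphi(\delta)$ is disjoint from $\varphi(M)$ (as $\varphi$ is edge-preserving), and since $\delta$ is a Farey neighbour of each of $\alpha,\beta$, Lemma \ref{toroidalSpherical} gives $i(\varphi(\delta),\varphi(\alpha)) \neq 0$ and $i(\varphi(\delta),\varphi(\beta)) \neq 0$. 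Hence $\varphi(\alpha)$ and $\varphi(\beta)$ lie in a common component of $S_{2}\setminus\varphi(M)$, necessarily of complexity two, so by the dichotomy above they are adjacent \wrt $\varphi(P)$.

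For the \emph{backward} implication I would argue by contraposition. If $\alpha,\beta$ are \emph{not} adjacent, they lie in distinct complexity-one components $U_{\alpha}\ni\alpha$ and $U_{\beta}\ni\beta$ of $S_{1}\setminus M$; choose a Farey neighbour $\delta \subset U_{\alpha}$ of $\alpha$ and a Farey neighbour $\epsilon \subset U_{\beta}$ of $\beta$. Then $\delta$ and $\epsilon$ are disjoint from each other and from $M$, with $i(\delta,\beta)=i(\epsilon,\alpha)=0$, so their images satisfy $i(\varphi(\delta),\varphi(\alpha))\neq 0$, $i(\varphi(\epsilon),\varphi(\beta))\neq 0$, while $\varphi(\delta)$ is disjoint from $\varphi(\epsilon)$, from $\varphi(\beta)$, and from $\varphi(M)$, and symmetrically for $\varphi(\epsilon)$. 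Suppose, for contradiction, that $\varphi(\alpha)$ and $\varphi(\beta)$ were adjacent, cobounding a pair of pants $Q'$ inside a common complexity-two component $V$ of $S_{2}\setminus\varphi(M)$. Since $\varphi(\delta)$ crosses $\varphi(\alpha)\subset V$ and avoids $\partial V \subset \varphi(M)$, it lies in $V$; likewise $\varphi(\epsilon)\subset V$. I expect the main obstacle to be here: one must show that, in a complexity-two surface, a curve crossing one of the two pants curves but not the other, and a curve crossing the other but not the first, with both avoiding the third boundary of the shared pants $Q'$, are forced to intersect --- contradicting $i(\varphi(\delta),\varphi(\epsilon))=0$. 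This is an essential-arc argument inside $Q'$, carried out case by case for $V \cong S_{0,5}$ and $V \cong S_{1,2}$, with extra care when the third boundary of $Q'$ is a puncture rather than a curve of $\varphi(M)$.

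Combining the two implications shows that $\alpha,\beta$ are adjacent \wrt $P$ if and only if $\varphi(\alpha),\varphi(\beta)$ are adjacent \wrt $\varphi(P)$. As $\varphi_{P}$ is already a bijection between the vertex sets of $\acomp{P}$ and $\acomp{\varphi(P)}$ (Lemma \ref{kS1equalkS2}), a vertex-bijection that both preserves and reflects edges is a simplicial isomorphism, which is the desired conclusion.
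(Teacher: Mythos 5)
Your proposal is correct and follows essentially the same strategy as the paper's proof: the forward direction via a single curve that is a Farey neighbour of both $\alpha$ and $\beta$ and disjoint from $P\setminus\{\alpha,\beta\}$, whose image ties $\varphi(\alpha)$ and $\varphi(\beta)$ into a common complexity-two component of $S_{2}\setminus\varphi(P\setminus\{\alpha,\beta\})$, and the backward direction by contraposition using two Farey neighbours, one for each of $\alpha$ and $\beta$, whose images would be forced to intersect if $\varphi(\alpha)$ and $\varphi(\beta)$ were adjacent. The step you flag as the main obstacle (the filling/essential-arc argument forcing $\varphi(\delta)$ and $\varphi(\epsilon)$ to intersect inside the complexity-two piece) is precisely the point the paper leaves as a one-line assertion, and your sketch of how to fill it is the right one.
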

\begin{proof}
 Suppose that $\alpha$ and $\beta$ are adjacent \wrt $P$. Then we can find a curve $\gamma \in \ccomp{S_{1}}$ that is a Farey neighbour with both $\alpha$ and $\beta$, and $\gamma$ is disjoint from every element in $P \backslash \{\alpha,\beta\}$. By Corollary \ref{EdgePresToroidal} $\varphi(\gamma)$ is disjoint from every element in $\varphi(P \backslash \{\alpha, \beta\})$ and intersects both $\varphi(\alpha)$ and $\varphi(\beta)$. Then, $\varphi(\gamma)$, $\varphi(\alpha)$ and $\varphi(\beta)$ are curves in $S_{2} \backslash \varphi(P \backslash \{\alpha,\beta\})$.\\
 \indent But if $\varphi(\alpha)$ and $\varphi(\beta)$ are not adjacent \wrt $\varphi(P)$ then they are in different connected components of $S_{2} \backslash \varphi(P \backslash \{\alpha,\beta\})$ while at the same time being intersected by a curve in $S_{2} \backslash \varphi(P \backslash \{\alpha, \beta\})$, which is impossible. Thus, $\varphi(\alpha)$ and $\varphi(\beta)$ are adjacent \wrt $\varphi(P)$.\\
 \indent Conversely, if $\alpha$ and $\beta$ are not adjacent \wrt $P$, let $\gamma_{1}, \gamma_{2} \in \ccomp{S_{1}}$ be such that:
 \begin{enumerate}
  \item $\gamma_{1}$ and $\alpha$ are Farey neighbours.
  \item $\gamma_{2}$ and $\beta$ are Farey neighbours.
  \item $\gamma_{1}$ is disjoint from every element in $(P \backslash \{\alpha\}) \cup \{\gamma_{2}\}$.
  \item $\gamma_{2}$ is disjoint from every element in $(P \backslash \{\beta\}) \cup \{\gamma_{1}\}$.
 \end{enumerate}
 \indent This implies, by Corollary \ref{EdgePresToroidal}, that:
 \begin{enumerate}
  \item $i(\varphi(\alpha), \varphi(\gamma_{1})) \neq 0 \neq i(\varphi(\beta), \varphi(\gamma_{2}))$.
  \item $\varphi(\gamma_{1})$ is disjoint from every element in $\varphi(P \backslash \{\alpha\} \cup \{\gamma_{2}\})$.
  \item $\varphi(\gamma_{2})$ is disjoint from every element in $\varphi(P \backslash \{\beta\} \cup \{\gamma_{1}\})$.
 \end{enumerate}
 \indent By construction $\varphi(P \backslash \{\alpha\})$ has $\kappa(S_{1}) -1 = \kappa(S_{2})-1$ elements, thus $S_{2} \backslash \varphi(P \backslash \{\alpha\})$ is the disjoint union of surfaces homeomorphic to $S_{0,3}$ and exactly one surface of positive complexity; given that $\varphi(\alpha)$ and $\varphi(\gamma_{1})$ are disjoint from every element in $\varphi(P \backslash \{\alpha\})$, then $\varphi(\alpha)$ and $\varphi(\gamma_{1})$ are contained in a complexity-one subsurface of $S_{2}$. Analogously for $\varphi(\beta)$ and $\varphi(\gamma_{2})$. But if $\varphi(\alpha)$ and $\varphi(\beta)$ are adjacent \wrt $\varphi(P)$, we would get that $\varphi(\gamma_{1})$ would intersect $\varphi(\gamma_{2})$, which is not possible.\\
 \indent Therefore $\alpha$ and $\beta$ are adjacent \wrt $P$ if and only if $\varphi(\alpha)$ and $\varphi(\beta)$ are adjacent \wrt $\varphi(P)$. This particularly implies that $\varphi_{P}$ is a bijective simplicial map with simplicial inverse, and so it is an isomorphism.
\end{proof}
\indent An \textit{outer curve} $\gamma \in \ccomp{S_{i}}$ (for some $i \in \{1,2\}$) is a separating curve such that $S_{i} \backslash \{\gamma\}$ has a connected component homeomorphic to $S_{0,3}$.\\
\indent Note that given a pants decomposition $P$ and $\alpha \in P$, we have that $\alpha$ is a nonouter separating curve if and only if the vertex corresponding to $\alpha$ in $\acomp{P}$ is a cut point. As an immediate consequence of this and Lemma \ref{adjacencyCCOMP} we have the following lemma.
\begin{Lema}[cf. Lemma 5 in \cite{Shack}]\label{NonouterNonouter}
 A curve $\alpha$ in $S_{1}$ is a nonouter separating curve if and only if $\varphi(\alpha)$ is a nonouter separating curve in $S_{2}$.
\end{Lema}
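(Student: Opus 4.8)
The plan is to reduce the statement to the observation recorded just above: for a pants decomposition $P$ and a curve $\alpha \in P$, the curve $\alpha$ is a nonouter separating curve precisely when the vertex $\alpha$ is a cut point of the adjacency graph $\acomp{P}$. Since being a cut point is an intrinsic, graph-theoretic property, it is preserved by any simplicial isomorphism, and Lemma \ref{adjacencyCCOMP} supplies exactly such an isomorphism between $\acomp{P}$ and $\acomp{\varphi(P)}$. The proof is then essentially the formal bookkeeping that combines these two facts.

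First I would fix a curve $\alpha$ in $S_{1}$ and choose a pants decomposition $P$ of $S_{1}$ with $\alpha \in P$. By Lemma \ref{kS1equalkS2} we have $\kappa(S_{2}) \geq \kappa(S_{1})$, so together with the standing hypothesis $\kappa(S_{1}) \geq \kappa(S_{2})$ we obtain $\kappa(S_{1}) = \kappa(S_{2})$. Corollary \ref{corkS1equalkS2} then guarantees that $\varphi(P)$ is again a pants decomposition, now of $S_{2}$; moreover, by Lemma \ref{kS1equalkS2}, $\varphi$ is injective on the multicurve $P$, so that $\varphi(\alpha)$ is a genuine vertex of the pants decomposition $\varphi(P)$.

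Next I would invoke Lemma \ref{adjacencyCCOMP}: the induced map $\varphi_{P} \colon \acomp{P} \to \acomp{\varphi(P)}$, given by $\alpha \mapsto \varphi(\alpha)$, is a simplicial isomorphism. Because a graph isomorphism sends cut points to cut points and non-cut points to non-cut points, the vertex $\alpha$ is a cut point of $\acomp{P}$ if and only if $\varphi(\alpha)$ is a cut point of $\acomp{\varphi(P)}$. Finally I would apply the cut-point characterization on each surface, reading the chain of equivalences: $\alpha$ is a nonouter separating curve in $S_{1}$ iff $\alpha$ is a cut point of $\acomp{P}$ iff $\varphi(\alpha)$ is a cut point of $\acomp{\varphi(P)}$ iff $\varphi(\alpha)$ is a nonouter separating curve in $S_{2}$, which is precisely the asserted equivalence.

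I do not anticipate a serious obstacle, as all the genuine content has already been packaged into Lemma \ref{adjacencyCCOMP} and the preceding remark. The one point requiring care is that the cut-point characterization must be applied to matching decompositions on both sides, i.e. that $\varphi(\alpha)$ really is a vertex of the pants decomposition $\varphi(P)$; this is exactly what the equal-complexity observation and Corollary \ref{corkS1equalkS2} secure. If one wished to be fully self-contained, the only step whose justification lives outside the quoted results is the assertion that a simplicial isomorphism preserves cut points, which is immediate from the definition of a cut point as a vertex whose removal increases the number of connected components.
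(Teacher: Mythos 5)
Your proposal is correct and matches the paper's argument: the paper records the same cut-point characterization of nonouter separating curves in $\acomp{P}$ and then states the lemma as an immediate consequence of that observation together with the simplicial isomorphism of Lemma \ref{adjacencyCCOMP}. Your write-up simply makes explicit the bookkeeping (equal complexity, $\varphi(P)$ being a pants decomposition, isomorphisms preserving cut points) that the paper leaves implicit.
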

\indent With this we have proved that $\varphi$ respects the topological type of nonouter separating curves. To prove a similar result for nonseparating curves we note first the following.
\begin{Rem}\label{OuterDegree}
 Given any outer curve $\alpha$ in $S_{i}$ (for $i=1,2$) and any pants decomposition $P$ of $S_{i}$ such that $\alpha \in P$, we have that $\alpha$ has degree at most $2$ in $\acomp{P}$
\end{Rem}
\begin{Lema}[cf. Lemma 5 in \cite{Shack}]\label{NonsepNonsep}
 If $\alpha$ is a nonseparating curve in $S_{1}$, then $\varphi(\alpha)$ is a nonseparating curve in $S_{2}$.
\end{Lema}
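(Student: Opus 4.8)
The plan is to classify every curve as exactly one of three types---nonseparating, outer separating, or nonouter separating---and to eliminate the last two possibilities for $\varphi(\alpha)$. Since a nonseparating curve is in particular not a nonouter separating curve, Lemma \ref{NonouterNonouter} immediately gives that $\varphi(\alpha)$ is not a nonouter separating curve. Thus the only thing left to do is to rule out that $\varphi(\alpha)$ is an outer curve; once this is achieved, $\varphi(\alpha)$ is forced to be nonseparating and the proof is complete.

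To carry this out I would detect the topological type through the adjacency graph. By Remark \ref{OuterDegree}, an outer curve has degree at most $2$ in the adjacency graph of any pants decomposition containing it, so it suffices to produce a single pants decomposition $P$ of $S_{1}$ with $\alpha \in P$ in which $\alpha$ has degree at least $3$ in $\acomp{P}$. Indeed, by Corollary \ref{corkS1equalkS2} the image $\varphi(P)$ is again a pants decomposition (and $\varphi|_{P}$ is injective by Lemma \ref{kS1equalkS2}), by Lemma \ref{adjacencyCCOMP} the induced map $\varphi_{P} : \acomp{P} \to \acomp{\varphi(P)}$ is a simplicial isomorphism, so $\varphi(\alpha)$ has the same degree $\geq 3$ in $\acomp{\varphi(P)}$; by Remark \ref{OuterDegree} this prevents $\varphi(\alpha)$ from being outer.

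The heart of the argument is therefore the construction of such a $P$. Cutting $S_{1}$ along the nonseparating curve $\alpha$ yields a connected surface $S'$ of genus $g_{1}-1 \geq 2$ with two distinguished boundary circles $\alpha_{+}$ and $\alpha_{-}$ (together with the $n_{1}$ punctures). Using that $g_{1} \geq 3$, I would cut off a pair of pants $\Pi_{+}$ with boundary $\{\alpha_{+}, \beta_{1}, \beta_{2}\}$ and a \emph{disjoint} pair of pants $\Pi_{-}$ with boundary $\{\alpha_{-}, \beta_{3}, \beta_{4}\}$, where $\beta_{1}, \beta_{2}, \beta_{3}, \beta_{4}$ are four distinct essential curves of $S'$; an Euler characteristic count shows the complement of $\Pi_{+} \cup \Pi_{-}$ still has nonnegative complexity, so it can be completed to a pants decomposition. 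After regluing $\alpha_{+}$ to $\alpha_{-}$, the curve $\alpha$ bounds the two distinct pairs of pants $\Pi_{+}$ and $\Pi_{-}$, hence it is adjacent \wrt $P$ to all of $\beta_{1}, \beta_{2}, \beta_{3}, \beta_{4}$, so its degree in $\acomp{P}$ is at least $3$ (in fact $4$).

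The main obstacle is precisely this topological construction: one must ensure that for every nonseparating $\alpha$ and every admissible pair $(g_{1}, n_{1})$ with $g_{1} \geq 3$ there really is enough room to place the two pairs of pants disjointly with four distinct complementary curves, rather than having both sides of $\alpha$ fall into a single pair of pants (which would collapse the degree to $1$). The genus bound $g_{1}-1 \geq 2$ is exactly what guarantees this, and combining the construction with the adjacency-graph transfer of the previous paragraph finishes the proof.
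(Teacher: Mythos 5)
Your proposal is correct and follows essentially the same route as the paper: the paper also rules out the nonouter separating case via Lemma \ref{NonouterNonouter}, exhibits a pants decomposition $P$ containing $\alpha$ in which $\alpha$ has degree $4$ in $\acomp{P}$ (given there by a figure rather than by your cut-and-reglue construction), and then uses Lemma \ref{adjacencyCCOMP} together with Remark \ref{OuterDegree} to exclude the outer case. Your explicit construction of $P$ is a reasonable substitute for the paper's pictorial one.
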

\begin{proof}
 Given a nonseparating curve $\alpha$ in $S_{1}$ we can find a pants decomposition $P$ in $S_{1}$ such that $\alpha$ has degree $4$ in $\acomp{P}$ (see Figure \ref{ExampleAlphaDegree4Sec3fig1}).
 \begin{figure}[h]
  \begin{center}
   \resizebox{10cm}{!}{\input{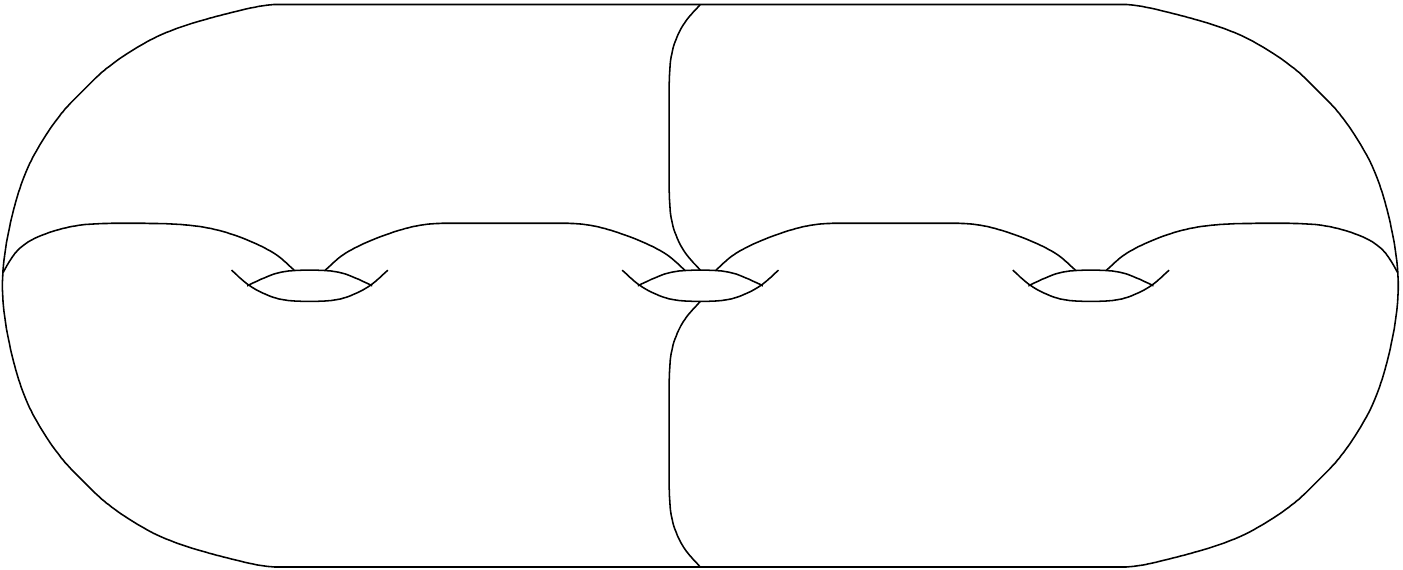_t}} \caption{An example of a pants decomposition in which $\alpha$ has degree $4$.} \label{ExampleAlphaDegree4Sec3fig1}
  \end{center}
 \end{figure}\\
 \indent Due to Lemma \ref{NonouterNonouter}, $\varphi(\alpha)$ cannot be a nonouter separating curve, and by Lemma \ref{adjacencyCCOMP} we have that $\varphi(\alpha)$ has degree $4$ in $\acomp{\varphi(P)}$; so $\varphi(\alpha)$ cannot be an outer curve and therefore $\varphi(\alpha)$ is a nonseparating curve.
\end{proof}
\indent For an analogous result for outer curves, we must give first a brief definition and a remark.\\
\indent A \textit{peripheral pair} $\{\alpha,\beta\}$ in $S_{i}$ (for $i=1,2$) is a multicurve such that $\alpha \neq \beta$, $\alpha$ and $\beta$ are \textbf{nonseparating curves}, and $S_{i}$ has a subsurface with $\alpha$ and $\beta$ as its only boundary curves and whose interior is homeomorphic to $S_{0,3}$ ($\alpha$ and $\beta$ cobord a punctured annulus). See Figure \ref{ExampleDefPeripheralPairs} for an example.
\begin{figure}[h]
 \begin{center}
  \resizebox{10cm}{!}{\input{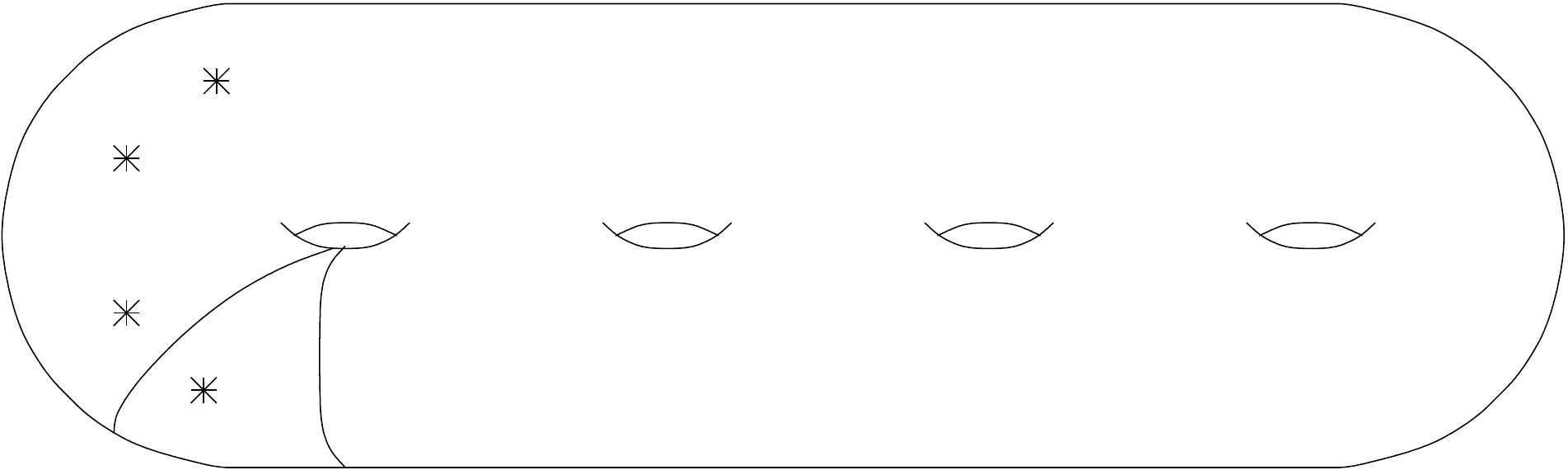_t}} \caption{An example of a peripheral pair in $S_{4,4}$.} \label{ExampleDefPeripheralPairs}
 \end{center}
\end{figure}\\
\indent This leads to the following remark.
\begin{Rem}\label{remarkdeg2}
 Let $P$ be a pants decomposition in $S_{i}$ (for $i=1,2$). If $\alpha$ is a nonseparating curve with degree $2$ in $\acomp{P}$ such that both vertices adjacent to $\alpha$ \wrt $P$ (say $\beta$ and $\gamma$) are nonseparating curves, then $\{\alpha,\beta\}$ and $\{\alpha,\gamma\}$ are peripheral pairs in $S_{i}$.
\end{Rem}
\indent Armed with this remark we can prove that $\varphi$ respects the topological type of outer curves (and thus of all curves in $S_{1}$). In particular, the proof of the following lemma is quite different from the proof of the analogous statement in \cite{Shack}, due to the different approaches.
\begin{Lema}\label{OuterOuter}
 If $\alpha$ is an outer curve in $S_{1}$, then $\varphi(\alpha)$ is an outer curve in $S_{2}$.
\end{Lema}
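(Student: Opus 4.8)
The plan is to argue by contradiction, first cutting down the possible topological types of $\varphi(\alpha)$ and then pinning down the complexity-one subsurface in which $\varphi(\alpha)$ lives. Since $\alpha$ is outer it is in particular not a nonouter separating curve, so by Lemma \ref{NonouterNonouter} the curve $\varphi(\alpha)$ is not nonouter separating either; hence $\varphi(\alpha)$ is either outer or nonseparating, and I will assume for contradiction that it is nonseparating. Because $\alpha$ is outer, Remark \ref{OuterDegree} lets me pick a pants decomposition $P$ of $S_{1}$ with $\alpha \in P$ in which $\alpha$ has degree exactly $2$; using $g_{1} \geq 3$ I can moreover arrange that its two neighbours $\beta, \gamma$ in $\acomp{P}$ are nonseparating. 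In this situation the complexity-one piece $N$ of $S_{1} \setminus (P \setminus \{\alpha\})$ is a copy of $S_{0,4}$ whose two boundary curves are $\beta$ and $\gamma$ and whose two punctures lie in the twice-punctured disc bounded by $\alpha$, so that $\alpha$ is precisely the curve of $N$ separating the two punctures from $\{\beta,\gamma\}$.

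Next I would transport this picture across $\varphi$. By Lemma \ref{adjacencyCCOMP} the map $\varphi_{P}$ is an isomorphism $\acomp{P} \to \acomp{\varphi(P)}$, so $\varphi(\alpha)$ has degree $2$ with neighbours $\varphi(\beta), \varphi(\gamma)$, and by Lemma \ref{NonsepNonsep} these two curves are nonseparating. The key observation is that the complexity-one piece $N'$ of $S_{2} \setminus \varphi(P \setminus \{\alpha\})$ has \emph{exactly two} boundary curves, namely the distinct curves $\varphi(\beta)$ and $\varphi(\gamma)$; as a complexity-one subsurface it therefore cannot be an $S_{1,1}$ (which has a single hole) and must be an $S_{0,4}$ whose remaining two holes are punctures. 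Since every essential curve of $S_{0,4}$ separates its four holes into two pairs, $\varphi(\alpha)$ realises one of three configurations: either it separates the two punctures from $\{\varphi(\beta),\varphi(\gamma)\}$, in which case it bounds a twice-punctured disc and is \emph{outer}, contrary to our assumption; or it pairs each puncture with one of $\varphi(\beta),\varphi(\gamma)$. As $\varphi(\alpha)$ is by hypothesis neither outer nor nonouter separating, only the latter configurations survive.

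In either surviving configuration $\varphi(\alpha)$ cobounds a once-punctured annulus with each of $\varphi(\beta)$ and $\varphi(\gamma)$; since both neighbours are nonseparating, Remark \ref{remarkdeg2} identifies $\{\varphi(\alpha),\varphi(\beta)\}$ and $\{\varphi(\alpha),\varphi(\gamma)\}$ as peripheral pairs of $S_{2}$. This is exactly the configuration I must rule out, and it is the main obstacle of the proof: in $S_{1}$ the curves $\alpha,\beta,\gamma$ bound a common pair of pants whose third cuff $\gamma$ is an honest curve rather than a puncture, so $\{\alpha,\beta\}$ is \emph{not} a peripheral pair. The heart of the matter is therefore to show that $\varphi$ cannot convert the punctureless pair of pants $(\alpha,\beta,\gamma)$ into one carrying a puncture, that is, that a peripheral pair in the target must be reflected to a peripheral pair in the source. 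I expect to establish this reflecting property directly from the fact that $\varphi$ is a Farey map (Lemma \ref{toroidalSpherical}): the idea is to exhibit a curve $\delta$ in $S_{1}$, disjoint from $\gamma$ and from $P \setminus \{\alpha,\beta\}$ and meeting $\alpha$ and $\beta$ in a controlled way through the pair of pants $(\alpha,\beta,\gamma)$, whose image is forced by the (spherical) Farey condition to record the genuine cuff $\varphi(\gamma)$ lying between $\varphi(\alpha)$ and $\varphi(\beta)$, which is impossible once those two curves are separated only by a puncture. Making this witness robust enough that its image truly obstructs the punctured-annulus configuration is where the real work lies; once it is in place the contradiction is immediate, and we conclude that $\varphi(\alpha)$ must be outer.
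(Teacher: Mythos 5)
Your setup matches the paper's: you choose $P$ with $\alpha$ of degree $2$ in $\acomp{P}$ and all other curves nonseparating, push the adjacency data through Lemma \ref{adjacencyCCOMP}, reduce via Lemmas \ref{NonouterNonouter} and \ref{NonsepNonsep} to the case that $\varphi(\alpha)$ is nonseparating, and arrive via Remark \ref{remarkdeg2} at the configuration where $\{\varphi(\alpha),\varphi(\beta)\}$ and $\{\varphi(\alpha),\varphi(\gamma)\}$ are peripheral pairs. But at exactly that point you stop: you declare that ruling out this configuration is ``where the real work lies'' and only gesture at a strategy (a witness curve $\delta$ in $S_{1}$ whose image, via the spherical-Farey property, would ``record'' the cuff $\varphi(\gamma)$), without constructing the witness or explaining why its image would obstruct the punctured-annulus picture. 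That is a genuine gap --- the proof is not finished, and the proposed route is both vaguer and harder than necessary.

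The missing idea is that the contradiction can be extracted entirely on the $S_{2}$ side, with no further input from $\varphi$. You already know from Lemma \ref{adjacencyCCOMP} that $\varphi(\beta)$ and $\varphi(\gamma)$ are adjacent with respect to $\varphi(P)$, so they cobound a pair of pants $P^{\prime}$ induced by $\varphi(P)$. Gluing $P^{\prime}$ to the two once-punctured annuli coming from the peripheral pairs produces a subsurface of $S_{2}$ whose only possible boundary is the third cuff $\delta$ of $P^{\prime}$. If $\delta$ is nonessential, that subsurface is essentially all of $S_{2}$ and forces $\kappa(S_{2})<4$, contradicting $\kappa(S_{2})=\kappa(S_{1})\geq 6$ (Remark \ref{kS1geq6}); if $\delta$ is essential, it is a separating element of $\varphi(P)$, contradicting that every element of $\varphi(P)$ is nonseparating under your standing assumption. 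Either way you are done. So the obstruction you were trying to import from $S_{1}$ is already present in the target configuration itself; no Farey-map witness is needed for this step.
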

\begin{proof}
 Let $\alpha$ be an outer curve in $S_{1}$. Let $P$ a pants decomposition in $S_{1}$ with $\alpha \in P$ such that the degree of $\alpha$ in $\acomp{P}$ is $2$ and every element in $P \backslash \{\alpha\}$ is a nonseparating curve. See Figure \ref{ExampleOuterCurveDegree2WithNonsepNeighbours} for an example.
\begin{figure}[h]
 \begin{center}
  \resizebox{10cm}{!}{\input{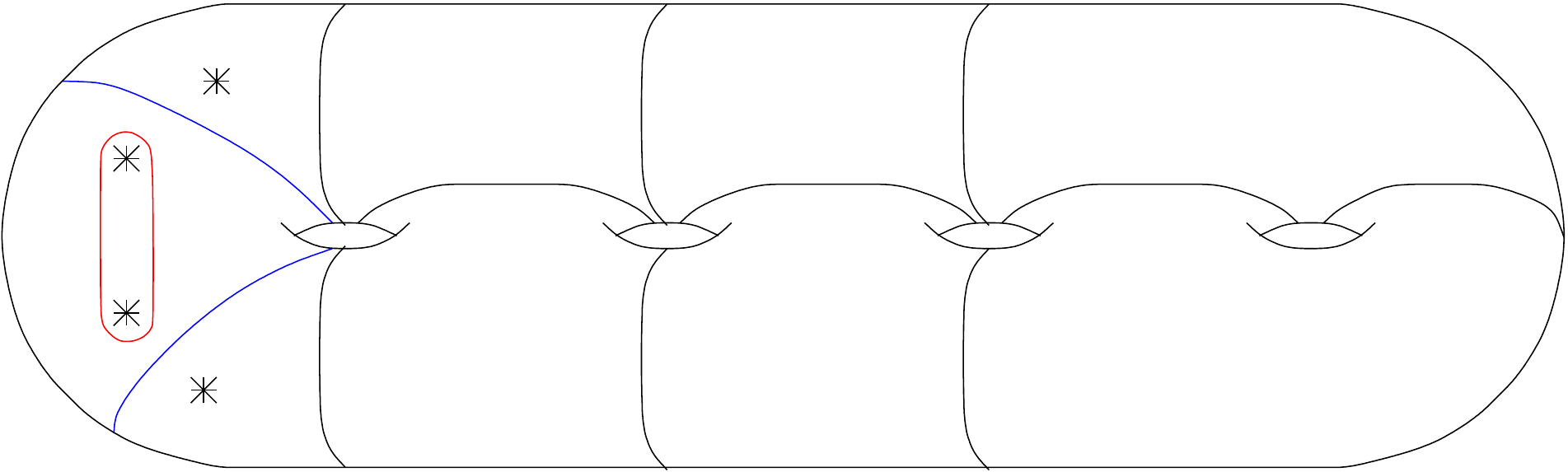_t}} \caption{An example of curve $\alpha$ in red, the pants decomposition $P$ in black, and the curves $\beta$ and $\gamma$ adjacent to $\alpha$ \wrt $P$ in blue.} \label{ExampleOuterCurveDegree2WithNonsepNeighbours}
 \end{center}
\end{figure}\\
\indent Then by Lemma \ref{adjacencyCCOMP}, $\varphi(\alpha)$ has degree $2$ in $\acomp{\varphi(P)}$. Due to Lemma \ref{NonouterNonouter}, if $\varphi(\alpha)$ were not an outer curve it would be a nonseparating curve; also, by Lemma \ref{NonsepNonsep}, every element in $\varphi(P)$ would then be a nonseparating curve. Let $\beta$ and $\gamma$ be the two nonseparating curves adjacent to $\alpha$ \wrt $P$, which then are also adjacent to each other \wrt $P$. It follows by Remark \ref{remarkdeg2}, that $\{\varphi(\beta), \varphi(\alpha)\}$ and $\{\varphi(\gamma), \varphi(\alpha)\}$ are peripheral pairs.\\
 \indent Given that $\varphi(\beta)$ and $\varphi(\gamma)$ are also adjacent \wrt $\varphi(P)$, there exists a subsurface $P^{\prime}$ in $S_{2}$ whose interior is homeomorphic to $S_{0,3}$ and has $\varphi(\beta)$ and $\varphi(\gamma)$ as two of its boundary curves. Let $\delta$ be a (possibly nonessential) curve in $S_{2}$ contained in $P^{\prime}$ that is isotopic neither to $\varphi(\beta)$ nor to $\varphi(\gamma)$. If $\delta$ is nonessential, we would have that $\kappa(S^{\prime}) < 4$ (see Figure \ref{OutertoOuterCurveDeltafig1}), but if it is essential it would have to be a separating curve in $\varphi(P)$ which is impossible, reaching like this a contradiction. Therefore $\varphi(\alpha)$ is an outer curve.
\end{proof}
\begin{figure}[h]
 \begin{center}
  \resizebox{8 cm}{!}{\input{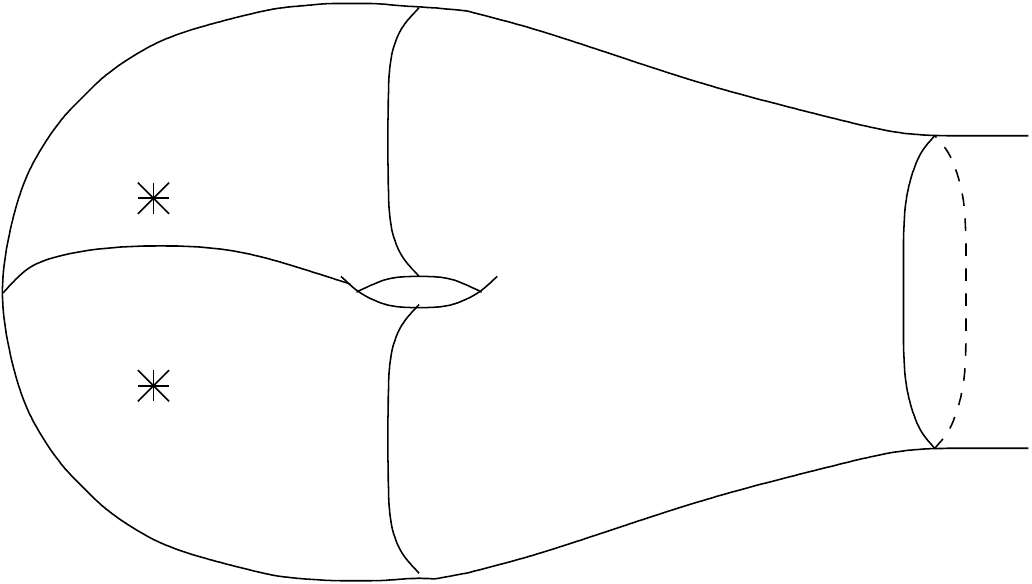_t}} \caption{The peripheral pairs $\{\varphi(\beta), \varphi(\alpha)\}$ and $\{\varphi(\alpha), \varphi(\gamma)\}$, with the curve $\delta$.} \label{OutertoOuterCurveDeltafig1}
 \end{center}
\end{figure}
\indent This lemma gives us the following information concerning the punctures of $S_{1}$ and $S_{2}$.
\begin{Cor}\label{PuncturesOuter}
 If $n_{1}$ is even, then $n_{1} \leq n_{2}$; if $n_{1}$ is odd, then $n_{1} -1 \leq n_{2}$.
\end{Cor}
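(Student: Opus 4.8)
The plan is to exploit Lemma \ref{OuterOuter} together with the elementary observation that an outer curve is precisely the boundary of a twice-punctured disk, so it encircles exactly two punctures of the ambient surface. Once we know that $\varphi$ sends outer curves to outer curves, a packing of disjoint outer curves in $S_{1}$ will produce, via $\varphi$, a packing of the same size in $S_{2}$; counting the enclosed punctures then yields the desired inequalities.

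First I would construct the packing in $S_{1}$. Writing the punctures of $S_{1}$ as $p_{1}, \ldots, p_{n_{1}}$ and pairing them off consecutively, I choose pairwise-disjoint twice-punctured disks, one around each pair $\{p_{2k-1}, p_{2k}\}$. Their boundaries $c_{1}, \ldots, c_{m}$ are pairwise disjoint outer curves, where $m = n_{1}/2$ if $n_{1}$ is even and $m = (n_{1}-1)/2$ if $n_{1}$ is odd (one puncture being left uncovered in the odd case). Since $g_{1} \geq 3$, each $c_{k}$ is essential, hence a genuine vertex of $\ccomp{S_{1}}$, and together the $c_{k}$ form a multicurve of cardinality $m$.

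Next I would push this multicurve through $\varphi$. Because $\varphi$ is edge-preserving it sends the disjoint family $\{c_{k}\}$ to a disjoint family, and by Lemma \ref{kS1equalkS2} the image multicurve still has cardinality $m$; by Lemma \ref{OuterOuter} each $\varphi(c_{k})$ is again an outer curve in $S_{2}$. Thus $\varphi(c_{1}), \ldots, \varphi(c_{m})$ are $m$ distinct, pairwise disjoint outer curves in $S_{2}$, each bounding a twice-punctured disk $D_{k} \subset S_{2}$.

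The key step, and the only point requiring real care, is to show that the enclosed pairs of punctures are disjoint, so that the disks $D_{k}$ collectively contain $2m$ distinct punctures of $S_{2}$. Here I would use disjointness of the $\varphi(c_{k})$: two disjoint simple closed curves bound disks that are either nested or have disjoint interiors. If $D_{j} \subset D_{k}$ for some $j \neq k$, then the essential curve $\varphi(c_{j})$ lies inside the twice-punctured disk $D_{k}$; but the only essential simple closed curve in a twice-punctured disk is boundary-parallel, forcing $\varphi(c_{j})$ isotopic to $\varphi(c_{k})$ and contradicting their distinctness. Hence the disks are pairwise disjoint and enclose $2m$ distinct punctures, so $n_{2} \geq 2m$. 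Substituting $m = n_{1}/2$ in the even case and $m = (n_{1}-1)/2$ in the odd case gives $n_{1} \leq n_{2}$ and $n_{1} - 1 \leq n_{2}$ respectively, as claimed.
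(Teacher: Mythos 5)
Your proof is correct and follows essentially the same route as the paper: build a maximal multicurve of $\lfloor n_{1}/2\rfloor$ disjoint outer curves in $S_{1}$, push it through $\varphi$ using Lemma \ref{kS1equalkS2} and Lemma \ref{OuterOuter}, and count the punctures enclosed by the image. The only difference is that you spell out the final counting step (ruling out nested twice-punctured disks), which the paper leaves implicit.
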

\begin{proof}
 If $n_{1} = 0$ or $n_{1} = 1$, we obtain the desired result from $n_{2}$ being nonnegative.\\
 \indent If $n_{1} \geq 2$ and it is even, there exists $k \in \mathbb{Z}^{+}$ such that $2k = n_{1}$. Let $\{\alpha_{1}, \ldots, \alpha_{k}\}$ be a multicurve comprised of only outer curves. By Lemma \ref{OuterOuter}, we have that $\{\varphi(\alpha_{1}), \ldots, \varphi(\alpha_{k})\}$ is a multicurve of cardinality $k$ comprised of only outer curves. As such, $S_{2}$ must have at least $2k = n_{1}$ punctures.\\
 \indent If $n_{1} \geq 2$ and it is odd, there exists $k \in \mathbb{Z}^{+}$ such that $2k +1 = n_{1}$. From there we proceed analogously to the previous case to deduce that $S_{2}$ can contain $k$ outer curves, thus having at least $2k = n_{1} -1$ punctures.
\end{proof}
\indent With this corollary we need only a similar comparison between $g_{1}$ and $g_{2}$ to try and deduce that $S_{1}$ is homeomorphic to $S_{2}$. For this, we must first prove some technical lemmas, including the preservation of intersection number $1$ under $\varphi$.\\
\indent Let $\alpha$, $\beta$ and $\gamma$ be three distinct curves in $S_{i}$ (for $i=1,2$). We say $\alpha$, $\beta$ and $\gamma$ bound a pair of pants in $S_{i}$ if there is a subsurface of $S_{i}$ whose interior is homeomorphic to $S_{0,3}$ and has $\{\alpha,\beta,\gamma\}$ as its three boundary curves. We proceed to prove this is preserved under $\varphi$.
\begin{Lema}\label{BoundBound}
 If $\alpha$, $\beta$ and $\gamma$ are three distinct nonseparating curves in $S_{1}$ that bound a pair of pants in $S_{1}$, then $\varphi(\alpha)$, $\varphi(\beta)$ and $\varphi(\gamma)$ bound a pair of pants in $S_{2}$.
\end{Lema}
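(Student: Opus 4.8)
The plan is to reduce the statement to a combinatorial fact about a pants decomposition and then to detect the pair of pants using the Farey--map property established above. First, since $\alpha$, $\beta$ and $\gamma$ are the three boundary curves of an embedded $S_{0,3}$, they are pairwise disjoint and distinct, so $\{\alpha,\beta,\gamma\}$ is a multicurve of cardinality three. By Lemma \ref{kS1equalkS2} its image is a multicurve of cardinality three, and by Lemma \ref{NonsepNonsep} each image curve is nonseparating; hence $\{\varphi(\alpha),\varphi(\beta),\varphi(\gamma)\}$ is a multicurve of three distinct nonseparating curves in $S_2$, and it remains only to show that they cobound a single pair of pants. I would then extend $\{\alpha,\beta,\gamma\}$ to a pants decomposition $P$ of $S_1$ in which the pair of pants $P_0$ they bound is one of the complementary pieces. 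By construction $\alpha$, $\beta$ and $\gamma$ are pairwise adjacent \wrt $P$, so by Lemma \ref{adjacencyCCOMP} the curves $\varphi(\alpha),\varphi(\beta),\varphi(\gamma)$ are pairwise adjacent \wrt $\varphi(P)$, which is itself a pants decomposition by Corollary \ref{corkS1equalkS2}.

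The crucial observation is that three pairwise-adjacent distinct nonseparating curves in a pants decomposition of $S_2$ fall into exactly one of two configurations. Either two of the three pairs of pants realizing the three adjacencies coincide, in which case that single pair of pants has all three curves as boundary and we are done; or the three pairs of pants are pairwise distinct, in which case they are glued cyclically along $\varphi(\alpha),\varphi(\beta),\varphi(\gamma)$, and a short Euler-characteristic computation (three pairs of pants, three gluing curves, hence $\chi=-3$ with three remaining boundary components) shows that $\varphi(\alpha),\varphi(\beta),\varphi(\gamma)$ sit as the non-peripheral curves of a subsurface homeomorphic to $S_{1,3}$. Thus the entire content of the lemma is to rule out this second, cyclic configuration for the images.

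To exclude the cyclic configuration I would feed the map curves that exist precisely because $\alpha$, $\beta$ and $\gamma$ genuinely bound $P_0$. Concatenating an arc in $P_0$ from $\alpha$ to $\beta$ with an arc in $S_1\setminus\mathrm{int}(P_0)$ produces a curve $\delta$ with $i(\delta,\alpha)=i(\delta,\beta)=1$ and $i(\delta,\gamma)=0$; since a regular neighbourhood of $\delta\cup\alpha$ (respectively of $\delta\cup\beta$) is a one-holed torus, $\delta$ is a toroidal-Farey neighbour of both $\alpha$ and $\beta$, so by Lemma \ref{toroidalSpherical} its image $\varphi(\delta)$ meets both $\varphi(\alpha)$ and $\varphi(\beta)$ yet is disjoint from $\varphi(\gamma)$, and the two cyclic variants give the symmetric constraints. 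The main obstacle, and the technical heart of the argument, is to turn these crossing patterns into a contradiction with the cyclic gluing: here I would use that $\varphi$ controls not only these curves but also the complexity-one subsurfaces in which each pair of images must lie (these are pinned down by $\varphi(P)$ via Corollary \ref{corkS1equalkS2}), and argue that the crossing curves forced into the putative $S_{1,3}$ cannot be accommodated unless two of the three pairs of pants coincide. The delicate point is that a priori $\varphi(\delta)$ may leave the subsurface $S_{1,3}$ and borrow genus from the rest of $S_2$, so the incompatibility must be extracted from the global intersection data rather than from the subsurface alone. Once the cyclic configuration is eliminated, $\varphi(\alpha),\varphi(\beta),\varphi(\gamma)$ bound a common pair of pants, as claimed.
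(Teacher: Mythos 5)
Your setup is sound and parallels the paper's: reduce to adjacency in a pants decomposition $P \supset \{\alpha,\beta,\gamma\}$ containing the pair of pants they bound, push the adjacencies through Lemma \ref{adjacencyCCOMP}, and observe that the only way the conclusion can fail is the ``cyclic'' configuration in which the three adjacencies are realized by three distinct pairs of pants. But the exclusion of that configuration is the entire content of the lemma, and at precisely that point your argument stops being a proof: you propose to derive a contradiction from Farey neighbours $\delta$ with $i(\delta,\alpha)=i(\delta,\beta)=1$ and $i(\delta,\gamma)=0$, and then you correctly observe yourself that $\varphi(\delta)$ may leave the putative genus-one subsurface and ``borrow genus'' from the rest of $S_{2}$. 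That worry is fatal as stated: in the cyclic configuration there genuinely do exist curves in $S_{2}$ meeting $\varphi(\alpha)$ and $\varphi(\beta)$ essentially and missing $\varphi(\gamma)$ (an arc through the pair of pants they cobound, closed up through the complement), so the intersection pattern of $\varphi(\delta)$ alone produces no contradiction, and no concrete mechanism is given for extracting one from ``global intersection data.''

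The paper avoids this impasse by choosing $P$ more carefully rather than by probing with extra curves. It takes $P$ consisting only of nonseparating curves, with $\alpha$ and $\beta$ of degree $3$ and $\gamma$ of degree $4$ in $\acomp{P}$, and with $\beta$ the \emph{unique} element of $P$ adjacent to both $\alpha$ and $\gamma$. Lemma \ref{adjacencyCCOMP} transports all of this to $\acomp{\varphi(P)}$. In the cyclic configuration the third boundary curve $\delta_{3}$ of the pair of pants realizing the adjacency of $\varphi(\alpha)$ and $\varphi(\gamma)$ would, if essential, be a second common neighbour of $\varphi(\alpha)$ and $\varphi(\gamma)$, contradicting the uniqueness of $\varphi(\beta)$; so $\delta_{3}$ is inessential, that pair of pants is a punctured annulus, and then $\varphi(\gamma)$ can have degree at most $3$ in $\acomp{\varphi(P)}$, contradicting degree $4$. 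If you want to salvage your route, you need some device of this kind --- a finite amount of adjacency-graph data that is incompatible with the cyclic gluing --- rather than intersection data from auxiliary transverse curves, which the cyclic configuration can accommodate.
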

\begin{proof}
 Given that $\alpha$, $\beta$ and $\gamma$ are nonseparating curves, let $P$ be a pants decomposition comprised of only nonseparating curves such that $\alpha, \beta, \gamma \in P$, $\alpha$ and $\beta$ have degree three in $\acomp{P}$, $\gamma$ has degree four in $\acomp{P}$, and $\beta$ is the only curve in $P$ that is adjacent \wrt $P$ to both $\alpha$ and $\gamma$. See figure \ref{ExamplePairofpantsAlphaBetaDeg3GammaDeg4fig1} for an example.
\begin{figure}[h]
 \begin{center}
  \resizebox{10cm}{!}{\input{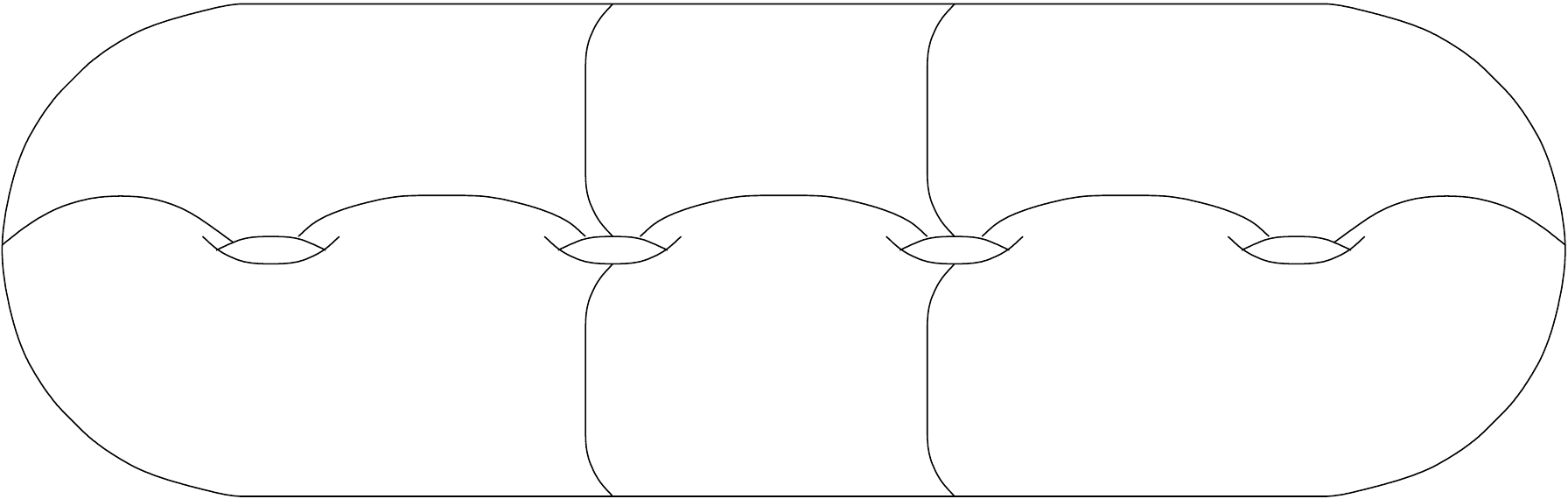_t}} \caption{An example of a pair of pants $P$ with $\alpha$ and $\beta$ having degree $3$ in $\acomp{P}$ and $\gamma$ degree $4$ in $\acomp{P}$.} \label{ExamplePairofpantsAlphaBetaDeg3GammaDeg4fig1}
 \end{center}
\end{figure}\\
 \indent By Lemma \ref{NonsepNonsep} we have that $\varphi(P)$ is comprised of only nonseparating curves, and by Lemma \ref{adjacencyCCOMP} we have that $\varphi(\alpha)$ and $\varphi(\beta)$ have degree three and $\varphi(\gamma)$ has degree four in $\acomp{\varphi(P)}$. If $\varphi(\alpha)$, $\varphi(\beta)$ and $\varphi(\gamma)$ do not bound a pair of pants on $S_{2}$ then there exist a pair of pants bounded by $\varphi(\alpha)$, $\varphi(\beta)$ and $\delta_{1} \neq \varphi(\gamma)$, another pair of pants bounded by $\varphi(\beta)$, $\varphi(\gamma)$ and $\delta_{2} \neq \varphi(\alpha)$, and another pair of pants bounded by $\varphi(\alpha)$, $\varphi(\gamma)$ and $\delta_{3} \neq \varphi(\beta)$. See figure \ref{ExamplePairofpantsAlphaBetaDeg3GammaDeg4fig2} for an example.
\begin{figure}[h]
 \begin{center}
  \resizebox{7cm}{!}{\input{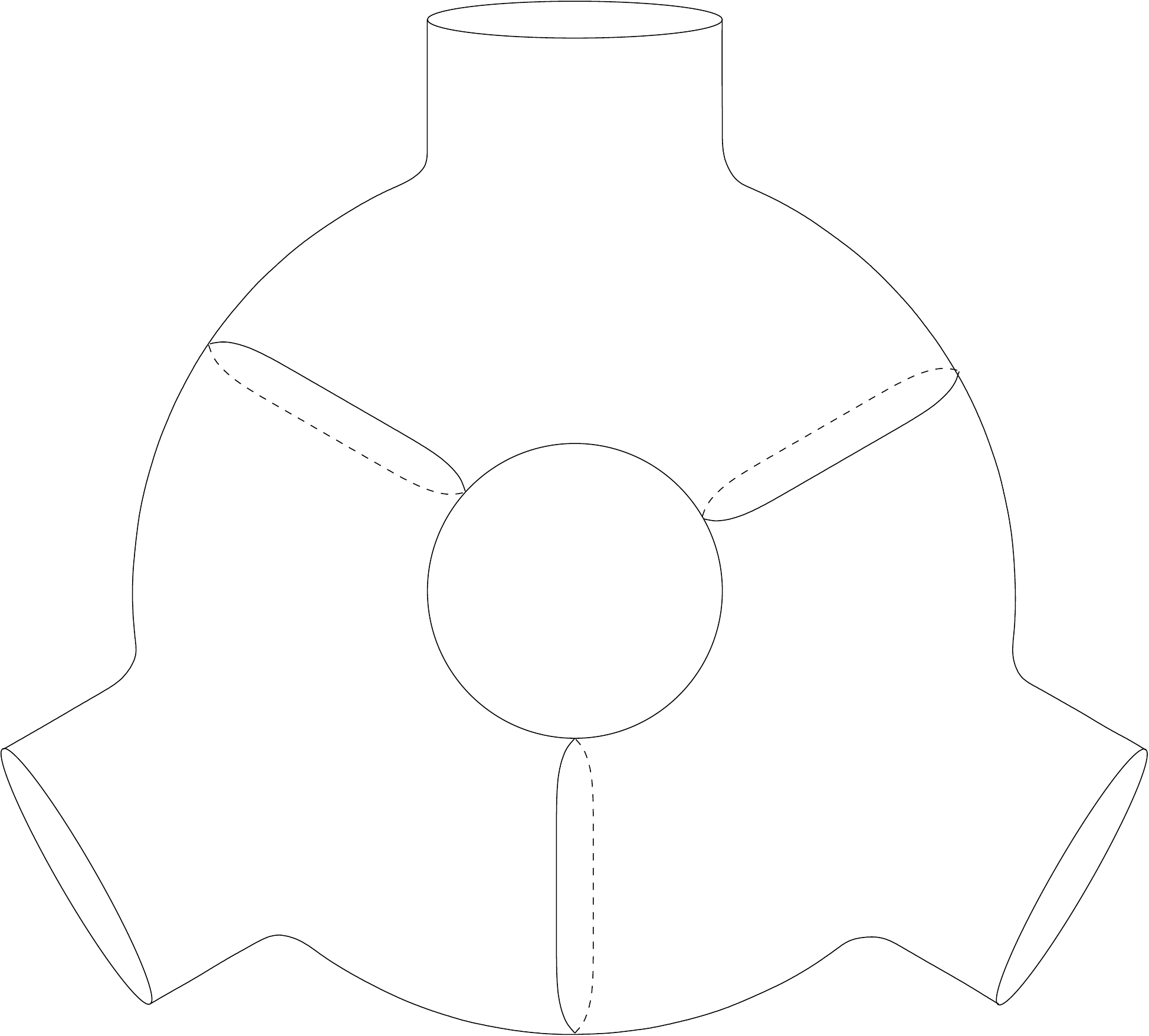_t}}\caption{The case in which $\varphi(\alpha)$, $\varphi(\beta)$ and $\varphi(\gamma)$ do not bound a pair of pants in $S_{2}$.} \label{ExamplePairofpantsAlphaBetaDeg3GammaDeg4fig2}
 \end{center}
\end{figure}\\
 \indent Note that $\delta_{1}$, $\delta_{2}$ and $\delta_{3}$ are neither necessarily distinct nor necessarily essential (they could be boundary curves of a neighbourhood of some puncture).\\
 \indent Once again, by Lemma \ref{adjacencyCCOMP}, $\varphi(\beta)$ is the only curve in $\acomp{\varphi(P)}$ that is adjacent \wrt $\varphi(P)$ to both $\varphi(\alpha)$ and $\varphi(\gamma)$; this implies that $\delta_{3}$ is not an essential curve, but this leads us to a contradiction, since $\varphi(\gamma)$ would then have degree at most $3$.
\end{proof}
\indent To prove that $\varphi$ preserves intersection number $1$, we must first recall Irmak's characterization of intersection number $1$ in \cite{Irmak3}. We have modified the statement to suit the notation used here.
\begin{Lema}[2.7 in \cite{Irmak3}] \label{CharactIrmak}
 Let $S$ be a surface homeomorphic to $S_{g,n}$, with $g \geq 2$ and $n \geq 0$; let also $\alpha_{1}$ and $\alpha_{2}$ be two nonseparating curves. Then, $i(\alpha_{1}, \alpha_{2}) = 1$ if and only if there exist distinct and nonseparating curves $\alpha_{3}$, $\alpha_{4}$, $\alpha_{5}$, $\alpha_{6}$ and $\alpha_{7}$ such that:
 \begin{enumerate}
  \item $i(\alpha_{i}, \alpha_{j}) = 0$ if and only if the curves $\gamma_{i}$ and $\gamma_{j}$ of Figure \ref{FigLemmaIrmakIntUno} are disjoint.
  \item The curves $\alpha_{1}$, $\alpha_{3}$, $\alpha_{5}$ and $\alpha_{6}$ are such that: $S \backslash \{\alpha_{5}, \alpha_{6}\}$ is disconnected with a connected component homeomorphic to $S_{1,2}$ that contains $\alpha_{1}$ and $\alpha_{2}$, and finally $\{\alpha_{1}, \alpha_{3}, \alpha_{5}\}$ and $\{\alpha_{1}, \alpha_{3}, \alpha_{6}\}$ bound pairs of pants in $S$.
 \end{enumerate}
\end{Lema}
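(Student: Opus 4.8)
The statement is cited from \cite{Irmak3}, so in practice one may simply invoke it; nonetheless I sketch how I would establish it directly. The proof is a two-directional argument whose engine is the change-of-coordinates principle: any two families of curves on $S$ realizing the same pattern of intersections and cutting $S$ into homeomorphic complementary pieces are carried one to the other by a homeomorphism of $S$. The forward implication is then a single explicit construction, while the converse is a reconstruction argument that recovers the topological type of $\alpha_1 \cup \alpha_2$ from the combinatorial data.

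For the forward direction, assume $i(\alpha_1,\alpha_2)=1$. Then a closed regular neighborhood $N$ of $\alpha_1 \cup \alpha_2$ is homeomorphic to $S_{1,1}$, and since $g \geq 2$ the complement $S \setminus \mathrm{int}(N)$ has genus at least $1$, leaving room to realize the remaining curves. I would first draw the model configuration $\gamma_1,\ldots,\gamma_7$ of Figure \ref{FigLemmaIrmakIntUno} in a fixed copy of $S_{g,n}$, checking by inspection that the $\gamma_i$ are distinct and nonseparating, that they satisfy the required disjointness pattern, that $\{\gamma_5,\gamma_6\}$ cuts off a copy of $S_{1,2}$ containing $\gamma_1,\gamma_2$, and that $\{\gamma_1,\gamma_3,\gamma_5\}$ and $\{\gamma_1,\gamma_3,\gamma_6\}$ bound pairs of pants. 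Since $\alpha_1,\alpha_2$ and $\gamma_1,\gamma_2$ are both pairs of nonseparating curves meeting exactly once, the change-of-coordinates principle supplies a homeomorphism $h$ of $S$ with $h(\gamma_1)=\alpha_1$ and $h(\gamma_2)=\alpha_2$; setting $\alpha_i \ColonEqq h(\gamma_i)$ for $3 \leq i \leq 7$ transports every one of these properties, yielding the desired curves.

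For the converse, suppose curves $\alpha_3,\ldots,\alpha_7$ exist as in the statement. The hypothesis that $S \setminus \{\alpha_5,\alpha_6\}$ has a component $W \cong S_{1,2}$ containing $\alpha_1,\alpha_2$ confines the essential part of the configuration to $W$. Because $\chi(S_{1,2})=-2$, the two pairs of pants bounded by $\{\alpha_1,\alpha_3,\alpha_5\}$ and $\{\alpha_1,\alpha_3,\alpha_6\}$ must glue along $\alpha_1$ and $\alpha_3$ to fill $W$; hence $\{\alpha_1,\alpha_3\}$ is a pants decomposition of $W$ with $\alpha_1,\alpha_3$ disjoint and nonseparating. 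I would then use the disjointness and intersection constraints of condition (1) involving $\alpha_2$ (together with the auxiliary curves $\alpha_4,\alpha_7$) to pin down the isotopy class of $\alpha_2$ relative to this decomposition: the pattern forces $\alpha_2$ to be a curve dual to $\alpha_1$ inside the genus-one part of $W$, so that a regular neighborhood of $\alpha_1 \cup \alpha_2$ is again $S_{1,1}$ and therefore $i(\alpha_1,\alpha_2)=1$.

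The main obstacle is the converse, and specifically the step of showing that the listed conditions leave no room for a configuration with $i(\alpha_1,\alpha_2) \neq 1$. One must rule out both $i(\alpha_1,\alpha_2)=0$ and $i(\alpha_1,\alpha_2)\geq 2$: the former would make $\alpha_2$ disjoint from the pants decomposition $\{\alpha_1,\alpha_3\}$ of $W$, contradicting the intersections forced by condition (1), while the latter is excluded by a careful count of how $\alpha_2$ can cross $\alpha_1$ inside $W \cong S_{1,2}$ while still admitting the full disjointness pattern with $\alpha_3,\ldots,\alpha_7$. Carrying this case analysis out rigorously---essentially reconstructing the homeomorphism type of $\alpha_1 \cup \cdots \cup \alpha_7$ from its combinatorics---is the delicate part; the change-of-coordinates principle then certifies that the only pattern compatible with all the constraints is the model one, in which $\alpha_1$ and $\alpha_2$ meet exactly once.
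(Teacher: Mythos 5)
The paper offers no proof of this statement: it is imported verbatim as Lemma 2.7 of \cite{Irmak3}, so your opening remark---that one may simply invoke the citation---is exactly what the paper does, and on that level your approach coincides with the paper's. Your supplementary sketch is correct and complete in the forward direction (push the model configuration of Figure \ref{FigLemmaIrmakIntUno} onto $\alpha_{1},\alpha_{2}$ by change of coordinates), but the converse remains an outline: the step you yourself flag as delicate---ruling out $i(\alpha_{1},\alpha_{2})\geq 2$ by analysing how $\alpha_{2}$ can sit inside $W\cong S_{1,2}$ compatibly with the intersection pattern against $\alpha_{4}$ and $\alpha_{7}$---is the entire content of Irmak's argument and is not carried out here (nor is the claim that the two pairs of pants have disjoint interiors and hence fill $W$ fully justified). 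Since the lemma is quoted rather than proved in the paper, this does not affect the paper's logic, but your sketch should not be read as a self-contained replacement for the citation.
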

\begin{figure}[h]
 \begin{center}
  \resizebox{8cm}{!}{\input{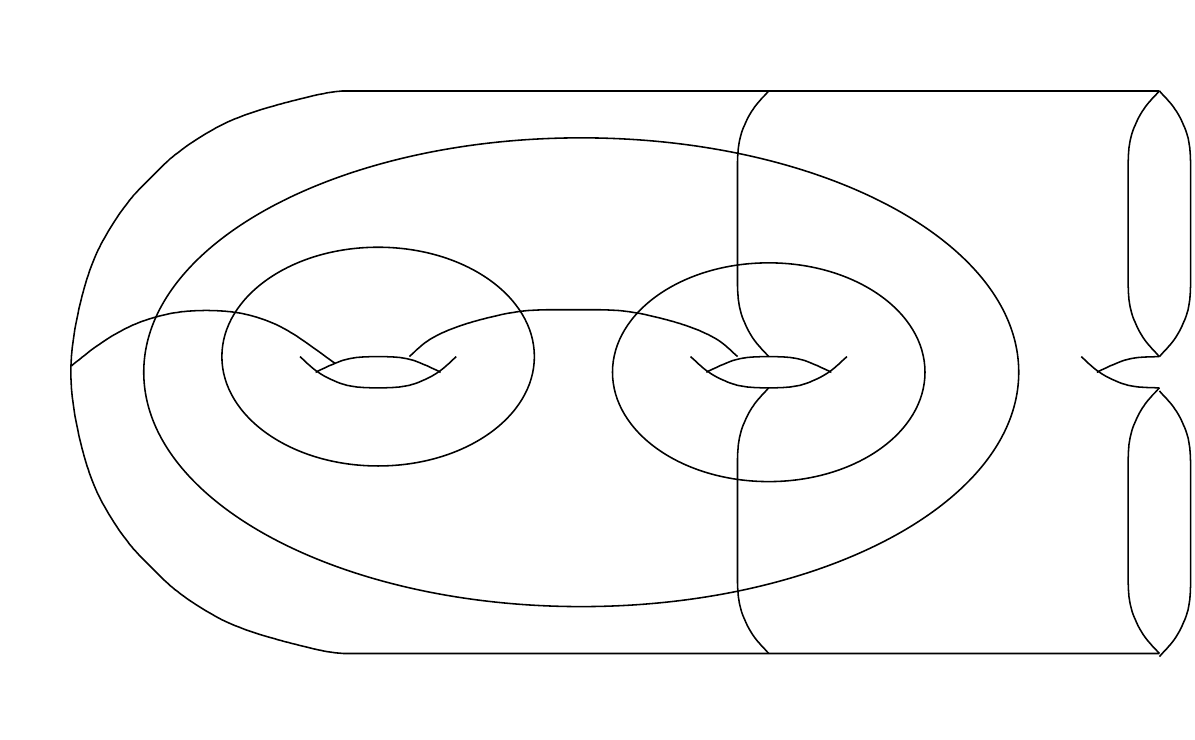_t}} \caption{The curves needed for Lemma \ref{CharactIrmak} to characterize intersection one.} \label{FigLemmaIrmakIntUno}
 \end{center}
\end{figure}
\begin{Lema}\label{IntUno}
 If $\alpha_{1}$ and $\alpha_{2}$ are curves in $S_{1}$ such that $i(\alpha_{1}, \alpha_{2}) = 1$, then $i(\varphi(\alpha_{1}), \varphi(\alpha_{2})) = 1$.
\end{Lema}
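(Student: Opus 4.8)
The plan is to verify that the images $\varphi(\alpha_1)$ and $\varphi(\alpha_2)$ satisfy the hypotheses of Irmak's characterization (Lemma \ref{CharactIrmak}) in $S_2$, which then forces $i(\varphi(\alpha_1),\varphi(\alpha_2)) = 1$. Since $i(\alpha_1,\alpha_2) = 1$, both curves are nonseparating, so Lemma \ref{CharactIrmak} applies in $S_1$ and yields distinct nonseparating curves $\alpha_3, \ldots, \alpha_7$ realizing the configuration of Figure \ref{FigLemmaIrmakIntUno}. I would carry the whole configuration through $\varphi$ and check, one condition at a time, that $\varphi(\alpha_1), \ldots, \varphi(\alpha_7)$ is an admissible configuration for the pair $\varphi(\alpha_1), \varphi(\alpha_2)$ in $S_2$. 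One should first note that $S_2$ has genus at least $2$, so that Lemma \ref{CharactIrmak} is available there; this follows from Lemma \ref{NonsepNonsep} together with the embedded $S_{1,2}$ produced below, and is not circular since that subsurface is built using only Lemma \ref{BoundBound}.

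The images are all nonseparating by Lemma \ref{NonsepNonsep}, and they are pairwise distinct: whenever two of the $\alpha_i$ are disjoint, the edge-preserving property sends the corresponding edge to an edge and so keeps the images distinct, while whenever two of the $\alpha_i$ meet they fill a complexity-one subsurface, i.e. they are Farey neighbours, so by Lemma \ref{toroidalSpherical} their images again intersect and in particular remain distinct. The same dichotomy gives Condition (1): disjoint pairs map to disjoint pairs (edge-preserving) and intersecting pairs map to intersecting pairs (Lemma \ref{toroidalSpherical}), so the disjointness pattern of $\{\varphi(\alpha_i)\}$ agrees with that of the $\gamma_i$ in Figure \ref{FigLemmaIrmakIntUno}. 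The pair-of-pants part of Condition (2) is immediate from Lemma \ref{BoundBound}: since $\{\alpha_1,\alpha_3,\alpha_5\}$ and $\{\alpha_1,\alpha_3,\alpha_6\}$ are triples of nonseparating curves bounding pairs of pants, so are $\{\varphi(\alpha_1),\varphi(\alpha_3),\varphi(\alpha_5)\}$ and $\{\varphi(\alpha_1),\varphi(\alpha_3),\varphi(\alpha_6)\}$, bounding pairs of pants $Q_1$ and $Q_2$ in $S_2$.

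The remaining, and hardest, point is the separation clause of Condition (2): that $S_2 \backslash \{\varphi(\alpha_5),\varphi(\alpha_6)\}$ is disconnected with a component homeomorphic to $S_{1,2}$ containing $\varphi(\alpha_1)$ and $\varphi(\alpha_2)$. The idea is to glue $Q_1$ and $Q_2$ along their two common boundary curves $\varphi(\alpha_1)$ and $\varphi(\alpha_3)$. Here one must rule out degenerate gluings: I would use that $\varphi(\alpha_5)$ and $\varphi(\alpha_6)$ are distinct and that $\varphi(\alpha_6)$, being disjoint from all three boundary curves of $Q_1$ and distinct from each, lies in the complement of $\mathrm{int}(Q_1)$; this forces $Q_2$ to attach to $Q_1$ along both $\varphi(\alpha_1)$ and $\varphi(\alpha_3)$ from the opposite sides, so that $R := Q_1 \cup Q_2$ is embedded with $\partial R = \varphi(\alpha_5) \cup \varphi(\alpha_6)$. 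An Euler characteristic count ($\chi(R) = -2$ with two boundary components) then gives $R \cong S_{1,2}$, and since $\partial R$ consists exactly of $\varphi(\alpha_5)$ and $\varphi(\alpha_6)$, the surface $R$ is a component of $S_2 \backslash \{\varphi(\alpha_5),\varphi(\alpha_6)\}$, which is therefore disconnected.

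Finally I would place $\varphi(\alpha_1)$ and $\varphi(\alpha_2)$ inside $R$: the curve $\varphi(\alpha_1)$ is one of the curves we glued along, hence lies in the interior of $R$; and $\varphi(\alpha_2)$ is disjoint from $\partial R = \varphi(\alpha_5) \cup \varphi(\alpha_6)$ (by the edge-preserving property, since $\alpha_2$ is disjoint from $\alpha_5$ and $\alpha_6$) while meeting $\varphi(\alpha_1)$ (by Lemma \ref{toroidalSpherical}, since $\alpha_1$ and $\alpha_2$ are toroidal-Farey neighbours), so $\varphi(\alpha_2)$ must lie in the same component $R$. Thus all hypotheses of Lemma \ref{CharactIrmak} hold for $\varphi(\alpha_1), \varphi(\alpha_2)$ in $S_2$, giving $i(\varphi(\alpha_1),\varphi(\alpha_2)) = 1$. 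I expect the gluing step of the previous paragraph --- checking that the two image pairs of pants fit together into an embedded $S_{1,2}$ rather than overlapping --- to be the main technical obstacle.
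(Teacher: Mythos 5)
Your proposal is correct and follows essentially the same route as the paper: apply Irmak's characterization (Lemma \ref{CharactIrmak}) to get the configuration $\alpha_{3},\ldots,\alpha_{7}$ in $S_{1}$, push it through $\varphi$ using the edge-preserving and Farey properties for condition (1) and Lemma \ref{BoundBound} for the pairs of pants, and deduce the $S_{1,2}$ component of $S_{2}\backslash\{\varphi(\alpha_{5}),\varphi(\alpha_{6})\}$ containing both images. Your extra care in the gluing step and in checking that $S_{2}$ has genus at least $2$ fills in details the paper leaves implicit, but the argument is the same.
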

\begin{proof}
 Let $\alpha_{1}$ and $\alpha_{2}$ be curves in $S_{1}$ that intersect once. By Lemma \ref{CharactIrmak} there exist curves $\alpha_{3}$, $\alpha_{4}$, $\alpha_{5}$, $\alpha_{6}$ and $\alpha_{7}$ that satisfy (1) and (2). Moreover, we can ask that whenever $\alpha_{i}$ intersects $\alpha_{j}$, (for some $i,j \in \{1, \ldots, 7\}$) they intersect once.\\
 \indent We now prove that $\varphi(\{\alpha_{1}, \ldots \alpha_{7}\})$ also satisfy the conditions of Lemma \ref{CharactIrmak}.\\
 \indent Given that $\varphi$ is an edge-preserving and Farey map, this implies that (1) is preserved under $\varphi$. By Lemma \ref{BoundBound} we have that $\{\varphi(\alpha_{1}), \varphi(\alpha_{3}), \varphi(\alpha_{5})\}$ and $\{\varphi(\alpha_{1}), \varphi(\alpha_{3}), \varphi(\alpha_{6})\}$ bound two distinct pair of pants in $S_{2}$, which implies that $S_{2} \backslash \{\varphi(\alpha_{5}), \varphi(\alpha_{6})\}$ is disconnected and has one connected component, namely $T$, homeomorphic to $S_{1,2}$. Since $i(\varphi(\alpha_{1}), \varphi(\alpha_{2})) \neq 0$ and $\varphi(\alpha_{2})$ is disjoint from both $\varphi(\alpha_{5})$ and $\varphi(\alpha_{6})$, we have that $\varphi(\alpha_{2})$ (and by construction $\varphi(\alpha_{1})$) is contained in $T$.\\
 \indent Therefore, by Lemma \ref{CharactIrmak}, $i(\varphi(\alpha_{1}), \varphi(\alpha_{2})) = 1$.
\end{proof}
\indent The following corollary is a consequence of this lemma.
\begin{Cor}\label{GenusChains}
 Chains in $S_{1}$ are mapped to chains of the same length in $S_{2}$. In particular $g_{1} \leq g_{2}$.
\end{Cor}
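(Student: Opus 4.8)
The plan is to prove the two assertions in turn, obtaining the genus inequality as an easy consequence of the chain statement. For the chain statement, let $C = \{\gamma_{0}, \ldots, \gamma_{k}\}$ be a chain of length $k+1$ in $S_{1}$, so that $i(\gamma_{i}, \gamma_{i+1}) = 1$ for $0 \leq i \leq k-1$ and $\gamma_{i}$ is disjoint from $\gamma_{j}$ whenever $|i-j| > 1$. I would set $\delta_{i} = \varphi(\gamma_{i})$ and check that $\{\delta_{0}, \ldots, \delta_{k}\}$, keeping the same ordering, is again a chain. For consecutive indices, $i(\gamma_{i}, \gamma_{i+1}) = 1$ gives $i(\delta_{i}, \delta_{i+1}) = 1$ directly by Lemma \ref{IntUno}. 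For $|i-j| > 1$, the curves $\gamma_{i}$ and $\gamma_{j}$ are disjoint and distinct, hence span an edge of $\ccomp{S_{1}}$; since $\varphi$ is edge-preserving, $\delta_{i}$ and $\delta_{j}$ span an edge of $\ccomp{S_{2}}$, so they are disjoint and, in particular, distinct. The remaining distinctness (for $|i-j|=1$) is automatic since $i(\delta_{i},\delta_{i+1}) = 1 \neq 0$. Thus all the $\delta_{i}$ are pairwise distinct and satisfy the defining intersection pattern, so $\varphi(C)$ is a chain of length $k+1$.

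For the ``in particular'' statement, I would exhibit in $S_{1}$ a chain of maximal length and push it through $\varphi$. Recall that $S_{1}$ contains a chain of length $2g_{1} + 1$: this is precisely the chain $\Cfn$ of Subsection \ref{chap3sec1subsec2} in the punctured case, and a maximal subchain of the closed chain $\Cf$ of Subsection \ref{chap3sec1subsec1} in the closed case. By the first part, $\varphi$ maps it to a chain $\{\delta_{0}, \ldots, \delta_{2g_{1}}\}$ of length $2g_{1} + 1$ in $S_{2}$. The homology classes of a chain of $2g_{1}+1$ curves carry the tridiagonal antisymmetric intersection form of odd size $2g_{1}+1$, which has rank $2g_{1}$; since these classes lie in $H_{1}(S_{2})$, whose algebraic intersection form has rank $2g_{2}$, and the rank of a form restricted to a subspace cannot exceed the rank of the ambient form, we get $2g_{1} \leq 2g_{2}$, i.e. $g_{1} \leq g_{2}$. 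Equivalently, and more geometrically, a regular neighbourhood $N$ of the image chain is an incompressible subsurface of $S_{2}$ of genus $g_{1}$, and genus is monotone under inclusions of incompressible subsurfaces.

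I expect the chain statement to be essentially immediate once Lemma \ref{IntUno} is in hand, so the only real points requiring care are bookkeeping rather than genuine obstacles: verifying that the image curves are pairwise distinct (which, as above, follows because edges join distinct vertices and curves meeting once are distinct), and pinning down the genus of the maximal chain's image. For the latter I would either use the rank computation above, or compute the Euler characteristic of $N$ directly from the graph $\bigcup_{i} \delta_{i}$, which has $2g_{1}$ vertices and $4g_{1}$ edges, giving $\chi(N) = -2g_{1}$ and $N \cong S_{g_{1},2}$. Either route delivers $g_{1} \leq g_{2}$, completing the argument.
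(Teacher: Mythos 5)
Your proof is correct and follows essentially the same route as the paper: the chain statement comes from Lemma \ref{IntUno} together with edge-preservation, and the genus inequality from pushing a long chain through $\varphi$ and computing the genus of a regular neighbourhood of its image. The only cosmetic differences are that the paper uses a chain of $2g_{1}$ curves rather than $2g_{1}+1$ and simply asserts the genus of the neighbourhood, whereas you supply the homological/Euler-characteristic justification explicitly.
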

\begin{proof}
 Lemma \ref{IntUno} and $\varphi$ being an edge-preserving map imply that chains in $S_{1}$ are mapped to chains of the same length in $S_{2}$. Now, let $C$ be a chain in $S_{1}$ of length $2g_{1}$; then $\varphi(C)$ is a chain of length $2g_{1}$. Hence the regular neighbourhood of $\varphi(C)$ has genus $g_{1}$. Therefore $g_{1} \leq g_{2}$.
\end{proof}
\indent Finally, we prove that $S_{1}$ is homeomorphic to $S_{2}$.
\begin{Lema}\label{S1HomeoS2}
 Let $S_{1} = S_{g_{1},n_{1}}$ and $S_{2} = S_{g_{2},n_{2}}$ be orientable surface of finite topological type, with $g_{1} \geq 3$, empty boundary, and $n_{1},n_{2} \geq 0$ punctures, such that $\kappa(S_{1}) \geq \kappa(S_{2})$; let also $\fun{\varphi}{\ccomp{S_{1}}}{\ccomp{S_{2}}}$ be an edge-preserving map. Then, $S_{1}$ is homeomorphic to $S_{2}$.
\end{Lema}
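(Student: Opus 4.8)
The plan is to combine the three numerical constraints that the preceding lemmas have already extracted from $\varphi$; no new geometric input is needed, only arithmetic. First I would pin down the complexities. By Lemma \ref{kS1equalkS2} we have $\kappa(S_2) \geq \kappa(S_1)$, and since by hypothesis $\kappa(S_1) \geq \kappa(S_2)$, the two complexities coincide, $\kappa(S_1) = \kappa(S_2)$. Spelling this out gives $3g_1 + n_1 = 3g_2 + n_2$, or equivalently $n_1 - n_2 = 3(g_2 - g_1)$.

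Next I would bring in the genus comparison. Corollary \ref{GenusChains} gives $g_1 \leq g_2$, so the right-hand side $3(g_2 - g_1)$ is a nonnegative multiple of $3$; in particular $n_1 \geq n_2$ and $n_1 - n_2 \in \{0, 3, 6, \ldots\}$. At this stage the difference $n_1 - n_2$ is known to be a nonnegative multiple of $3$, and it only remains to bound it from above.

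That upper bound comes from Corollary \ref{PuncturesOuter}. If $n_1$ is even, then $n_1 \leq n_2$, which together with $n_1 \geq n_2$ forces $n_1 = n_2$. If $n_1$ is odd, then $n_1 - 1 \leq n_2$, so $n_1 - n_2 \leq 1$; combined with the divisibility constraint $n_1 - n_2 \in \{0, 3, 6, \ldots\}$, the only possibility is again $n_1 - n_2 = 0$. In both cases $n_1 = n_2$, whence $3(g_2 - g_1) = 0$ and $g_1 = g_2$. Therefore $S_1 = S_{g_1,n_1}$ and $S_2 = S_{g_2,n_2}$ have the same genus and the same number of punctures, hence are homeomorphic.

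The substantive work lies in the lemmas being invoked; the step itself is a short arithmetic argument. The one point that genuinely deserves care is the odd case, where Corollary \ref{PuncturesOuter} yields only $n_1 - 1 \leq n_2$ rather than $n_1 \leq n_2$: it is precisely the interaction of this off-by-one bound with the divisibility of $n_1 - n_2$ by $3$ (coming from the genus comparison) that excludes the near-miss $n_2 = n_1 - 1$ and forces the homeomorphism type to agree exactly.
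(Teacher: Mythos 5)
Your proposal is correct and follows essentially the same route as the paper: both rest on the complexity equality from Lemma \ref{kS1equalkS2}, the genus inequality from Corollary \ref{GenusChains}, and the parity-split puncture bound from Corollary \ref{PuncturesOuter}, with the odd case killed by the same divisibility-by-$3$ observation (the paper phrases it as the impossibility of $3g_1 = 3g_2 - 1$). The only difference is cosmetic bookkeeping of the arithmetic.
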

\begin{proof}
 We divide the proof according to the parity of $n_{1}$.\\
 \indent If $n_{1}$ is even, by Lemma \ref{PuncturesOuter} we have that $n_{1} \leq n_{2}$. Also, by Lemma \ref{GenusChains} we have that $g_{1} \leq g_{2}$. Supposing that $g_{1} < g_{2}$, we obtain the following contradiction: $$\kappa(S_{2}) = \kappa(S_{1}) = 3g_{1} -3 + n_{1} < 3g_{2} -3 + n_{1} \leq 3g_{2} -3 + n_{2} = \kappa(S_{2}).$$
 \indent Thus, $g_{1} = g_{2}$. Given that $\kappa(S_{1}) = \kappa(S_{2})$, this implies that $n_{1} = n_{2}$. Hence $S_{1}$ is homeomorphic to $S_{2}$.\\
 \indent If $n_{1}$ is odd, by Lemma \ref{PuncturesOuter} we have that $n_{1} - 1 \leq n_{2}$. If $n_{1} -1 = n_{2}$, we have the following: $$3g_{1} -3 + n_{1} = \kappa(S_{1}) = \kappa(S_{2}) = 3g_{2} -3 +n_{2} = 3g_{2} -3 +(n_{1} -1),$$ thus, $$3g_{1} = 3g_{2} -1$$ which is impossible since $g_{1}, g_{2} \in \nat$. Hence $n_{1} \leq n_{2}$, and then we proceed as in the previous case.\\
 \indent Therefore, $S_{1}$ is homeomorphic to $S_{2}$.
\end{proof}
\subsection{Proof of Theorem \ref{TheoB}}\label{chap3sec3subsec3}
\indent In view of Lemma \ref{S1HomeoS2} we can assume then that any result concerning edge-preserving self-maps of $\ccomp{S}$ with $S = S_{g,n}$, $g \geq 3$ and $n \geq 0$, can be applied to $\varphi$, which (a priori) is not a self-map but a map between two curve graphs of surfaces of a priori different topological type.\\
\indent In this subsection we use the rigid set $\X(S_{1})$ from Section \ref{prelim}, we prove that $\varphi|_{\X(S_{1})}$ is injective and thus, by Corollary \ref{Thm4}, $\varphi$ is induced by a homeomorphism.
\begin{proof}[\textbf{Proof of Theorem \ref{TheoB}}]
 \indent To prove that $\varphi|_{\X(S_{1})}$ is injective, it can be verified by inspection that given any two curves $\alpha, \beta \in \X(S_{1})$, we have that exactly one of these statements is satisfied, and these cases are dealt with individually:
\begin{itemize}
 \item $\alpha$ is disjoint from $\beta$.
 \item $i(\alpha, \beta) = 1$.
 \item $i(\alpha, \beta) = 2$ and any regular neighbourhood of $\{\alpha,\beta\}$ is homeomorphic to a four-holed sphere.
 \item $\alpha$ and $\beta$ are separating curves and $i(\alpha,\beta) = 4$.
 \item $\alpha$ and $\beta$ are separating curves and $i(\alpha,\beta) = 8$.
\end{itemize}
\indent Now, let $\alpha$ and $\beta$ be two distinct curves in $\X(S_{1})$. If $\alpha$ and $\beta$ are disjoint or are Farey neighbours, since $\varphi$ is an edge-preserving Farey map, we have that $\varphi(\alpha) \neq \varphi(\beta)$; otherwise, if $\alpha$ and $\beta$ are separating curves intersecting either $4$ or $8$ times, we can always find a curve $\gamma$ such that $\gamma$ and $\beta$ are disjoint, and $\gamma$ and $\alpha$ are Farey neighbours (see Figure \ref{ProofThm5fig1} for examples). Thus, $\varphi(\alpha) \neq \varphi(\beta)$, which implies $\varphi|_{\X(S_{1})}$ is injective, and by Corollary \ref{Thm4} we have that $\varphi$ is induced by a homeomorphism.
\end{proof}
\begin{figure}[h]
 \begin{center}
  \resizebox{6cm}{!}{\input{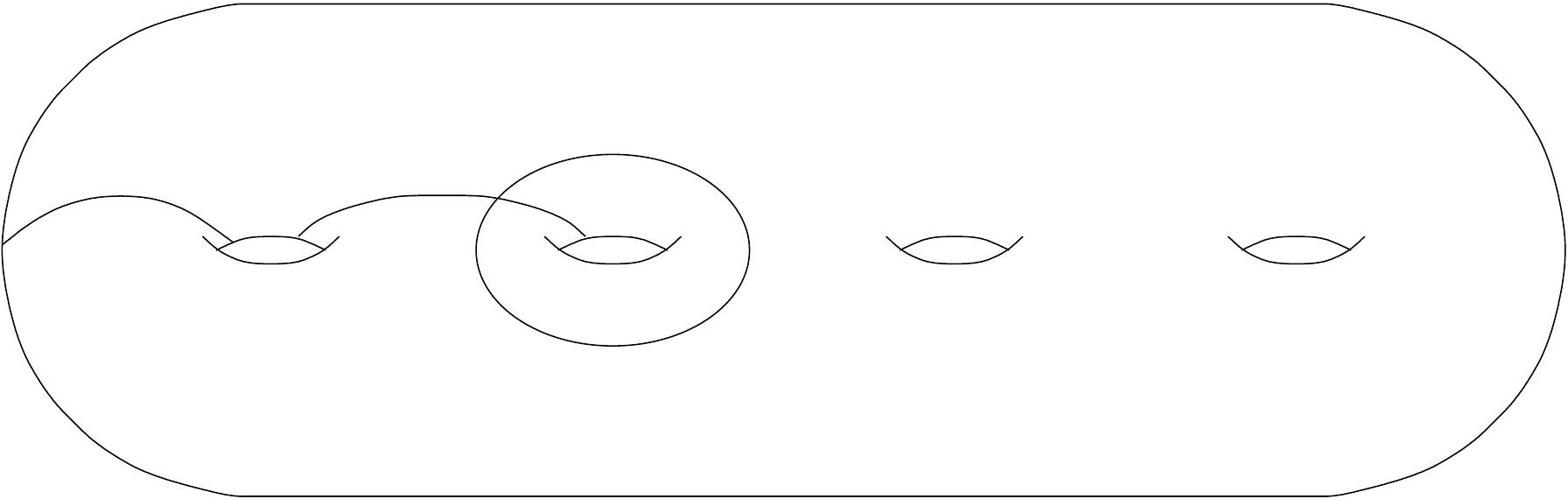_t}}\hspace{0.3cm} \resizebox{6cm}{!}{\input{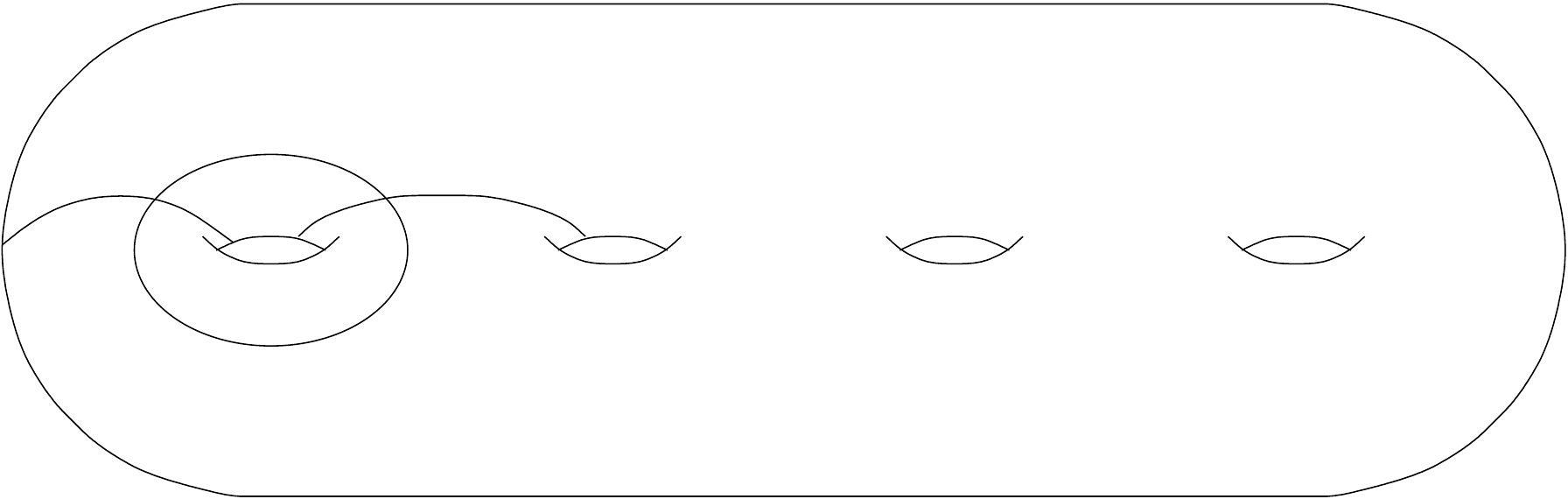_t}}\\[0.3cm]
  \resizebox{6cm}{!}{\input{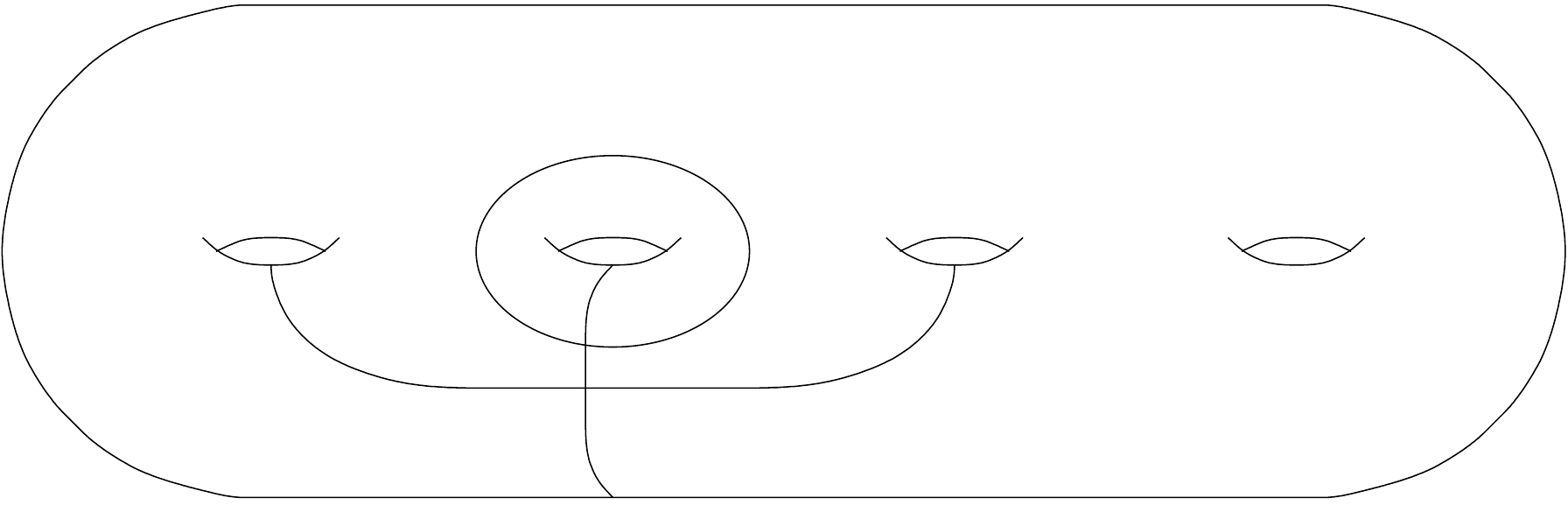_t}}\hspace{0.3cm} \resizebox{6cm}{!}{\input{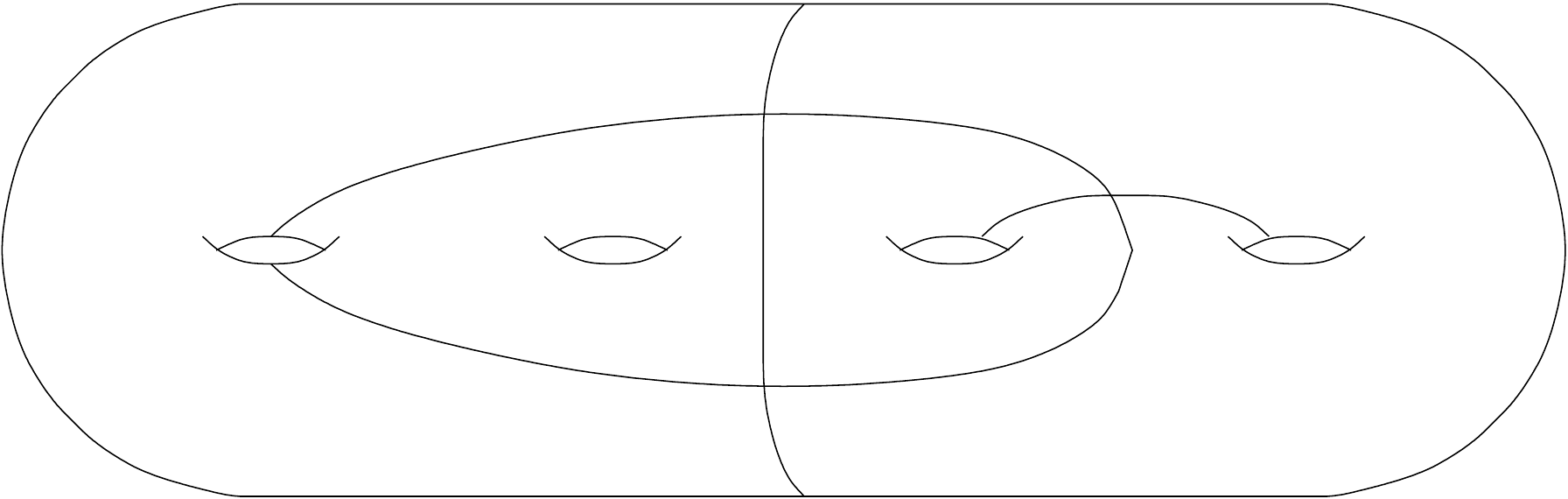_t}}\\[0.3cm]
  \resizebox{6cm}{!}{\input{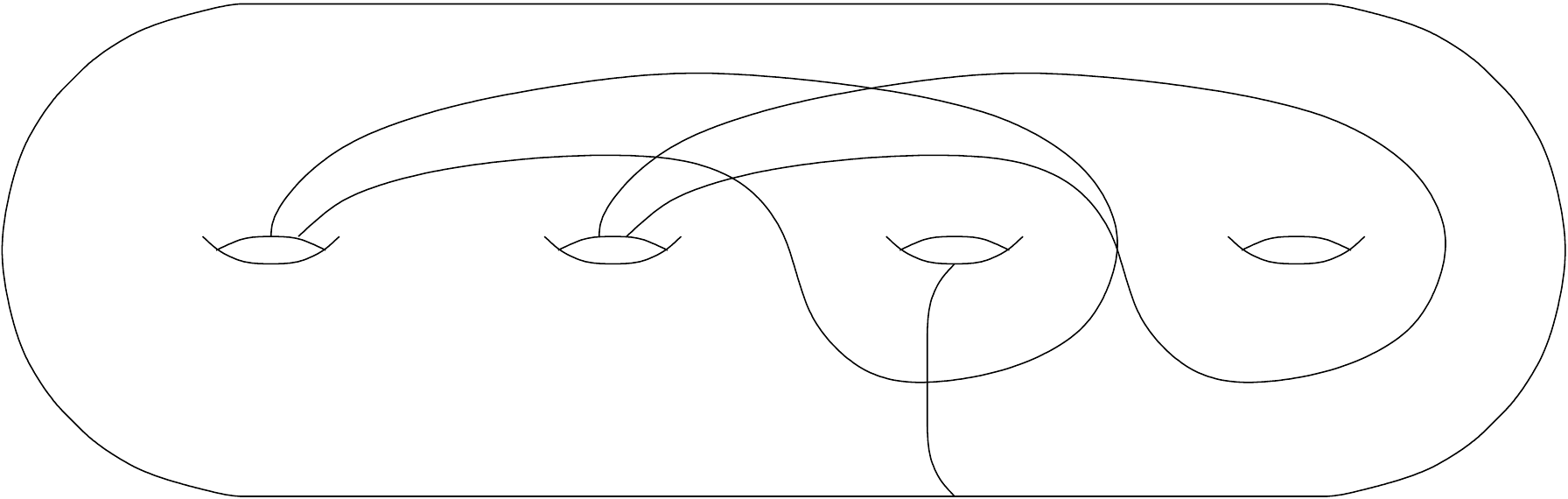_t}} \caption{The curves $\gamma$ needed in the proof of Theorem \ref{TheoB} that are Farey neighbours of $\alpha$ and disjoint from $\beta$.} \label{ProofThm5fig1}
 \end{center}
\end{figure}
\subsection{Proof of Corollary \ref{IntroCor2}}\label{chap3sec4}
\indent For the sake of convenience, we restate Corollary \ref{IntroCor2}.
\begin{Corno}[\ref{IntroCor2}]\label{Cor2}
 Let $S_{1} = S_{g_{1},n_{1}}$ and $S_{2} = S_{g_{2},n_{2}}$, such that $g_{1} \geq 3$ and $\kappa(S_{1}) \geq \kappa(S_{2}) \geq 6$; let also $\Gamma < \EMod{S_{1}}$ be a subgroup such that for every curve $\gamma$ in $S_{1}$ there exists $N \neq 0$ with $\tau_{\gamma}^{N} \in \Gamma$ (where $\tau_{\gamma}$ denotes the right Dehn twist along $\gamma$), and let $\fun{\phi}{\Gamma}{\EMod{S_{2}}}$ be a homomorphism such that:
 \begin{enumerate}
  \item For each curve $\gamma$ in $S_{1}$, there exist $L, M \in \mathbb{Z}^{*}$ such that $\tau_{\gamma}^{L} \in \Gamma$ and $\phi(\tau_{\gamma}^{L}) = \tau_{\delta}^{M}$ for some curve $\delta$ in $S_{2}$.
  \item For any disjoint curves $\alpha$ and $\beta$, there exist $N_{\alpha}, N_{\beta} \neq 0$ such that the subgroup generated by $\phi(\tau_{\alpha}^{N_{\alpha}})$ and $\phi(\tau_{\beta}^{N_{\beta}})$, is not cyclic.
 \end{enumerate}
 Then, $S_{1}$ is homeomorphic to $S_{2}$ and $\phi$ is the restriction to $\Gamma$ of an inner automorphism of $\EMod{S}$ with $S \cong S_{1} \cong S_{2}$. 
\end{Corno}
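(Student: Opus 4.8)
The plan is to follow Ivanov's recipe: use $\phi$ to manufacture an edge-preserving map between curve graphs, invoke Theorem \ref{TheoB} to upgrade it to a homeomorphism $h\colon S_1 \to S_2$, and finally recognize $\phi$ as conjugation by $h$. First I would define a vertex map $\fun{\varphi}{\ccomp{S_1}}{\ccomp{S_2}}$. Given a curve $\gamma$ in $S_1$, hypothesis (1) supplies $L,M\in\mathbb{Z}^{*}$ with $\phi(\tau_\gamma^{L})=\tau_\delta^{M}$ for some curve $\delta$ in $S_2$, and I set $\varphi(\gamma)\ColonEqq\delta$. This is well defined because a nonzero power of a Dehn twist determines its curve uniquely: if also $\tau_\gamma^{L'}\in\Gamma$ with $\phi(\tau_\gamma^{L'})=\tau_{\delta'}^{M'}$, then evaluating $\phi$ on $\tau_\gamma^{LL'}$ in two ways gives $\tau_\delta^{ML'}=\tau_{\delta'}^{M'L}$, forcing $\delta=\delta'$.

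Next I would prove that $\varphi$ is edge-preserving, i.e.\ that disjoint distinct curves are sent to disjoint distinct curves. If $i(\alpha,\beta)=0$ with $\alpha\neq\beta$, then $\tau_\alpha$ and $\tau_\beta$ commute, so their $\phi$-images commute; since two nonzero powers of Dehn twists commute exactly when their curves are equal or disjoint, either $\varphi(\alpha)=\varphi(\beta)$ or $i(\varphi(\alpha),\varphi(\beta))=0$. To exclude the first alternative, suppose $\varphi(\alpha)=\varphi(\beta)=\delta$; I would argue that then $\phi$ carries the infinite cyclic groups $\langle\tau_\alpha\rangle\cap\Gamma$ and $\langle\tau_\beta\rangle\cap\Gamma$ into $\langle\tau_\delta\rangle$, so that for every admissible $N_\alpha,N_\beta$ the elements $\phi(\tau_\alpha^{N_\alpha})$ and $\phi(\tau_\beta^{N_\beta})$ are powers of $\tau_\delta$ and hence generate a cyclic group, contradicting hypothesis (2). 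This is the step I expect to be the main obstacle: the delicate point is ruling out that the image of a twist power is only a root of a power of $\tau_\delta$ (a ``fractional'' twist contributing torsion) rather than an honest power of $\tau_\delta$. This is precisely where hypothesis (2) must be leveraged, using that the two commuting images already share a common nonzero power lying in $\langle\tau_\delta\rangle$, and where the bound $\kappa(S_2)\geq 6$ enters.

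Once $\varphi$ is known to be edge-preserving, the hypotheses of Theorem \ref{TheoB} are satisfied, since $g_1\geq 3$ and $\kappa(S_2)\leq\kappa(S_1)$. That theorem then yields at once that $S_1$ is homeomorphic to $S_2$ and that $\varphi$ is induced by a homeomorphism $h\colon S_1\to S_2$; in particular $\varphi(\gamma)=h(\gamma)$ for every curve $\gamma$ in $S_1$.

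Finally I would identify $\phi$ with conjugation by $h$. For $f\in\Gamma$ and any curve $\gamma$ with $\tau_\gamma^{N}\in\Gamma$, the conjugation formula $f\tau_\gamma^{N}f^{-1}=\tau_{f(\gamma)}^{N}$ (which lies in $\Gamma$ automatically) yields, after applying $\phi$ and using $\phi(\tau_\gamma^{N})=\tau_{h(\gamma)}^{M}$, an equality of nonzero Dehn-twist powers whose curves are $\phi(f)(h(\gamma))$ and $h(f(\gamma))$; hence $\phi(f)(h(\gamma))=h(f(\gamma))$ for all $\gamma$. Thus $h^{-1}\phi(f)h$ and $f$ act identically on $\ccomp{S_1}$, and because $g_1\geq 3$ makes the action of $\EMod{S_1}$ on the curve graph faithful (the center being trivial), we conclude $\phi(f)=hfh^{-1}$. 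Identifying $S\cong S_1\cong S_2$ through $h$, this exhibits $\phi$ as the restriction to $\Gamma$ of the inner automorphism of $\EMod{S}$ determined by $h$, completing the proof.
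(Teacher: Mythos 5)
Your proposal is correct and follows essentially the same route as the paper: define $\varphi(\gamma)=\delta$ via hypothesis (1), use commutativity of Dehn twists plus hypothesis (2) to show $\varphi$ is edge-preserving, invoke Theorem \ref{TheoB} to obtain the homeomorphism, and then run Ivanov's conjugation argument to identify $\phi$ with an inner automorphism. If anything you are more careful than the paper at two points it treats tersely, namely the well-definedness of $\varphi$ and the possibility that $\phi(\tau_\alpha^{N_\alpha})$ is only a root of a power of $\tau_\delta$ when applying hypothesis (2).
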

\begin{proof}
 We first induce a \textbf{simplicial} map $\fun{\varphi}{\ccomp{S_{1}}}{\ccomp{S_{2}}}$ from $\phi$.\\
 \indent Given a curve $\gamma$ in $S_{1}$, we define $\varphi(\gamma)$ as the curve $\delta$ such that $\phi(\tau_{\gamma}^{N}) = \tau_{\delta}^{M}$. This is well-defined.\\
\indent Let $\alpha$ and $\beta$ be curves in $S_{1}$; recall that for $N,M \in \mathbb{Z} \backslash \{0\}$, $\tau_{\alpha}^{N} \tau_{\beta}^{M} = \tau_{\beta}^{M}\tau_{\alpha}^{N}$ if and only if $i(\alpha,\beta) = 0$ (see \cite{FarbMar}). Then if $i(\alpha,\beta) = 0$, we have that $$\tau_{\varphi(\alpha)}^{M} \tau_{\varphi(\beta)}^{M^{\prime}} = \phi(\tau_{\alpha}^{N}) \phi(\tau_{\beta}^{N^{\prime}}) = \phi(\tau_{\alpha}^{N} \tau_{\beta}^{N^{\prime}}) = \phi(\tau_{\beta}^{N^{\prime}} \tau_{\alpha}^{N}) = \phi(\tau_{\beta}^{N^{\prime}}) \phi(\tau_{\alpha}^{N}) = \tau_{\varphi(\beta)}^{M^{\prime}} \tau_{\varphi(\alpha)}^{M},$$ therefore $i(\varphi(\alpha),\varphi(\beta)) = 0$ and $\varphi$ is simplicial.\\
\indent Now we need to prove that $\varphi$ is an \textbf{edge-preserving map} so we can apply Theorem \ref{TheoB}.\\
\indent To do so, we only need to prove that if $\alpha$ and $\beta$ are disjoint, then $\varphi(\alpha) \neq \varphi(\beta)$. We prove this by contradiction and suppose $\alpha$ and $\beta$ are disjoint curves such that $\varphi(\alpha) = \varphi(\beta)$. Then we have that the group $\left\langle \tau_{\varphi(\alpha)}^{M}, \tau_{\varphi(\beta)}^{M^{\prime}} \right\rangle = \left\langle \tau_{\varphi(\alpha)}^{M}, \tau_{\varphi(\alpha)}^{M^{\prime}}\right\rangle$ is cyclic which contradicts the hypothesis. Therefore $\varphi$ is edge-preserving.\\
\indent By Theorem \ref{TheoB}, $S_{1}$ is homeomorphic to $S_{2}$ and we have that there exists $f \in \EMod{S}$ such that $\varphi(\gamma) = f(\gamma)$ for all curves $\gamma$ in $S$, letting $S \cong S_{1} \cong S_{2}$. This implies that for some $N$, there exists $M$ such that $\phi(\tau_{\gamma}^{N}) = \tau_{f(\gamma)}^{M}$.\\
\indent Recall (see \cite{FarbMar}) that for any curve $\gamma$ in $S$, any $f \in \EMod{S}$ and $N \in \mathbb{Z}$, we have that $$f \tau_{\gamma}^{N}f^{-1} = \tau_{f(\gamma)}^{N^{\prime}},$$ where $N^{\prime} = N$ if $f$ an orientation preserving mapping class and $N^{\prime} = -N$ otherwise.\\
\indent Finally we proceed as in Ivanov's Theorem 2 in \cite{Ivanov}. Let $g \in \Gamma$, $\gamma$ be a curve in $S$, and $N$ be such that $\varphi(\tau_{\gamma}^{N}) = \tau_{f(\gamma)}^{M}$ for some $M \neq 0$. We know that: $$\phi(g \tau_{\gamma}^{N} g^{-1}) = \phi(g) \phi(\tau_{\gamma}^{N}) \phi(g^{-1}) = \phi(g) \tau_{f(\gamma)}^{M} \phi(g)^{-1} = \tau_{\phi(g)(f(\gamma))}^{M^{\prime}}.$$
\indent On the other hand we have: $$\phi(g \tau_{\gamma}^{N} g^{-1}) = \phi(\tau_{g(\gamma)}^{N^{\prime}}),$$ 
so, there exists powers $L, L^{\prime}, K, K^{\prime} \in \mathbb{Z} \backslash \{0\}$ (multiples of $N$, $N^{\prime}$, $M$ and $M^{\prime}$) such that $$\tau_{f(g(\gamma))}^{L^{\prime}} = \phi(\tau_{g(\gamma)}^{L}) = \phi(g \tau_{\gamma}^{K} g^{-1}) = \tau_{\phi(g)(f(\gamma))}^{K^{\prime}}$$
\indent Hence, by the same argument as above,  $L^{\prime} = K^{\prime}$ and more importantly $\phi(g)(f(\gamma)) = f(g(\gamma))$. Since $\gamma = f^{-1}(\alpha)$ for some unique curve $\alpha$ in $S$, we have that $\varphi(g)(\alpha) = f(g(f^{-1}(\alpha)))$ for all curves $\alpha$ in $S$. Since $S$ has genus $g \geq 3$ this implies that $\phi(g) = f \circ g \circ f^{-1}$ as desired.
\end{proof}
\indent This Corollary is indeed an extension of Shackleton's result for surfaces of complexity at least $6$. Any finite index subgroup of $\EMod{S}$ satisfies the conditions on $\Gamma$, but $\Gamma$ could have infinite index (see \cite{HumpPower} and \cite{FunarTQFT} for examples of infinite index subgroups satisfying (1) and (2)); also every homomorphism injective in the stabilizers of every curve in $S_{1}$ satisfies conditions (1) and (2).

\end{document}